\newcounter{nameOfYourChoice}
\numberwithin{equation}{section}
\theoremstyle{plain}
\newtheorem{theorem}{Theorem}[section]
\newtheorem{proposition}[theorem]{Proposition}
\newtheorem{lemma}[theorem]{Lemma}
\theoremstyle{definition}
\newtheorem{definition}[theorem]{Definition}
\theoremstyle{remark}
\newtheorem{remark}[theorem]{Remark}
\DeclareFontFamily{U}{MnSymbolC}{}
\DeclareSymbolFont{MnSyC}{U}{MnSymbolC}{m}{n}
\DeclareFontShape{U}{MnSymbolC}{m}{n}{
	<-6>  MnSymbolC5
	<6-7>  MnSymbolC6
	<7-8>  MnSymbolC7
	<8-9>  MnSymbolC8
	<9-10> MnSymbolC9
	<10-12> MnSymbolC10
	<12->   MnSymbolC12}{}
\DeclareMathSymbol{\intprod}{\mathbin}{MnSyC}{'270}
\begin{document}

\title[Stable Surfaces and Free Boundary MOTS]
{Stable Surfaces and Free Boundary marginally outer trapped surfaces}
\author{Aghil Alaee}
\address{\parbox{\linewidth}{Aghil Alaee\\
	Department of Mathematics and Computer Science, Clark University, USA\\
	Center of Mathematical Sciences and Applications, Harvard University, USA}}
\email{aalaeekhangha@clarku.edu, aghil.alaee@cmsa.fas.harvard.edu}
\author{Martin Lesourd}
\address{\parbox{\linewidth}{Martin Lesourd\\
Black Hole Initiative, Harvard University, USA}}
	\email{mlesourd@fas.harvard.edu}
\author{Shing-Tung Yau}
\address{\parbox{\linewidth}{Shing-Tung Yau\\
Department of Mathematics, Harvard University, USA}}
\email{yau@math.harvard.edu}
\begin{abstract}
We explore various notions of stability for surfaces embedded and immersed in spacetimes and initial data sets. The interest in such surfaces lies in their potential to go beyond the variational techniques which often underlie the study of minimal and CMC surfaces. We prove two versions of Christodoulou-Yau estimate for $\mathbf{H}$-stable surfaces, a Cohn-Vossen type inequality for non-compact stable marginally outer trapped surface (MOTS), and a global theorem on the topology of $\mathbf{H}$-stable surfaces. Moreover, we give a definition of capillary stability for MOTS with boundary. This notion of stability leads to an area inequality and a local splitting theorem for free boundary stable MOTS. Finally, we establish an index estimate and a diameter estimate for free boundary MOTS. These are straightforward generalizations of Chen-Fraser-Pang and Carlotto-Franz results for free boundary minimal surfaces, respectively. 
\end{abstract}

\maketitle

\section{Introduction}
The study of minimal hypersurfaces or more generally hypersurfaces with \textit{constant mean curvature} (CMC) in a Riemannian manifold $(M,g)$ has a natural generalization motivated by general relativity: the study of surfaces with prescribed \textit{null mean curvature} in \textit{initial data sets} and \textit{spacetimes}. 

A \textit{spacetime} is an $4$-dimensional time-oriented Lorentzian manifold $(L^{4},h)$ that satisfies the Einstein field equation
\begin{equation}
G_h\equiv \text{Ric}_{h}-\frac{1}{2}R_h h=T,
\end{equation}
where $\text{Ric}_{h}$ and $R_h$ are Ricci and scalar curvature of metric $h$ and $T$ is a symmetric two tensor associated with matter sources in $L^{4}$. By Gauss and Codazzi equations, we consider an \textit{initial data set} $(M^{3},g,k)$, where $(M^3,g)$ is a $3$-dimensional Riemannian manifold and $k$ is the second fundamental form, that satisfies the Einstein constraint equations
\begin{equation}\label{constraint}
R_g+(\text{tr}_g(k))^2-|k|^2=2\mu ,\qquad \text{div}_g(k-\text{tr}_g(k)g) =J,
\end{equation} where $\mu=T(\tau, \tau)$ and $J=T(\tau, \cdot)$ are the local energy density and momentum density and $\tau$ is the unit future-directed timelike normal vector on $(M^3,g,k)$.
 
In this paper, we study two kinds of objects: surfaces $\Sigma^2$ that are embedded (or immersed) in $M^3$ with unit normal vector $N$ and mean curvature $H$, and surfaces $\Sigma^{2}$ embedded (or immersed) in $L^{4}$ with unit future-directed nulllike normal vectors $l_{\pm}$ and associated \textit{null mean curvatures} $\theta_+$ and $\theta_-$. An example of prescribed null mean curvature surfaces that has gathered interest in recent years are \textit{marginally outer trapped surfaces} (MOTS), see \cite{AnderssonEichmairMetzger} and references therein. These are two sided spacelike hypersurfaces embedded in $(M^3,g,k)$ that have vanishing null expansion in a chosen direction $\theta_+=0$. Despite not having constant mean curvature, MOTS behaves like minimal and CMC surfaces in certain respects.  

Closed MOTS are physically interesting because these surfaces model the boundaries of black holes in general relativity. The existence of such surfaces were first obtained in \cite{SchoenYauII} where they were shown to be obstructions to global solutions of a prescribed mean curvature equation known as Jang's equation. Based on the analysis in \cite{SchoenYauII}, closed MOTS were also shown to exist in domains with two boundary components satisfying certain convexity properties \cite{AnderssonMetzger,Eichmair}. These ideas were used to obtain a black hole existence theorem in \cite{SY83}, later refined into a hoop conjecture result in \cite{ALY19}. 

For MOTS with boundary, Eichmair \cite{Eichmair} showed that such surfaces solve a certain version of the Plateau problem. In particular, for any smooth compact domain $\Omega$ with boundary $\partial \Omega$ that is seperated by a closed embedded curve $\gamma$, that is $\partial \Omega \backslash \gamma =\partial \Omega_1 \dot{\cup} \partial \Omega_2$ where $\partial \Omega_1$ and $\partial \Omega_2$ are disjoint and relatively open, such that $\theta_+|_{\partial \Omega_1}>0$ and $\theta_-|_{\partial \Omega_2}<0$, there exists an embedded MOTS $\Sigma$ with boundary $\partial \Sigma =\gamma$ that is smooth up to and including its boundary. 

MOTS are not known to minimize any elliptic functional (though they are \textit{$C$-almost-minimizing} \cite{Eichmair} as currents), and consequently it is generally not possible to apply the usual variational techniques used to study minimal and CMC surfaces. In spite of this, a stability concept for MOTS was defined in \cite{AnderssonMarsSimon} and has found various uses in \cite{G18,AnderssonMetzger10,C16,EHLS15,EM16,GM18,GallowaySchoen}. 

Here, we investigate further the analogy between surfaces of prescribed null mean curvature and their Riemannian counterparts, and obtain the following set of results.\\ 

\noindent\textbf{1. An initial data set generalization of Christodoulou-Yau's estimate \cite{CY}}. A hypersurface $\Sigma$ in a Rieamnnain manifold $(M,g)$ which locally minimizes its area given the enclosed volume is called a \textit{stable volume preserving surface}. Christodoulou and Yau obtained an estimate for Hawking mass of stable volume preserving, topologically $\mathbb{S}^2$, hypersurface $\Sigma$ in a Riemannian manifold $(M,g)$. In particular, for these surfaces that the Hawking mass is bounded below by a constant multiple of the integral of scalar curvature of $g$ over the surface. Therefore, if the scalar curvature is positive, the Hawking mass of these surfaces are positive. This estimate plays a key role in the recent progress on the question of canonical foliations and isoperimetry in asymptotically flat Riemannian manifolds, see \cite{EM13} and various references. \\ \indent 
For a surface $\Sigma$ in an initial data set $(M,g,k)$, it is no longer clear which kind of surfaces are to be estimated. Here we consider surfaces that are $\mathbf{H}$-stable, where $\mathbf{H}$ is mean curvature vector of $\Sigma$, with respect to a given direction $X$, and prove two versions of Christodoulou-Yau's estimate for $\mathbf{H}$-stable surfaces in Theorem \ref{thmm1} and Theorem \ref{thmm2}.  

We note that the selection of these surfaces is motivated by work of Metzger \cite{M07}, Nerz \cite{N18}, and Cederbaum and Sakovich \cite{CS19}, whom together show that asymptotically flat initial data sets with non-zero energy admit a foliation with leaves having constant $\theta_+\theta_-$. We expect that an estimate of this kind will lead better understanding of the foliation in \cite{CS19}.  \\ 

\noindent\textbf{2. Global results for $\mathbf{H}$-stable surfaces and non-compact stable MOTS}. The study of stable CMC or minimal surfaces has a long history in Riemannian geometry, see \cite{MPR08} and references therein. Whether in $\mathbb{R}^3$ or in manifolds with lower bounds on their Ricci or scalar curvature, a growing collection of global results have been obtained over the years. Here, we obtain analogous results in this context. In Theorem \ref{thm4.2}, we prove a Cohn-Vossen type inequality 
\begin{equation}
	\int_{\Sigma} \left(\mu+J(N)\right)d\mu_{\Sigma} \leq 2\pi,
	\end{equation}for any complete non-compact stable MOTS $\Sigma$ with unit normal vector $N$ in an initial data set satisfying $\mu-|J|> 0$. 
	In addition, in Theorem \ref{thm4.81}, we obtain global result on topology of $\mathbf{H}$-stable surfaces which generalizes Theorem 2.13 of Meeks-Perez-Ros \cite{MPR08} obtained for stable CMC surfaces.\\

\noindent\textbf{3. A definition of stability for capillary and free boundary MOTS}. Free boundary minimal surfaces have been the subject of intense research in recent years. Analogous to the successful concept of stability for closed MOTS introduced in \cite{AnderssonMarsSimon}, we define the notion of stability for capillary and free boundary MOTS, see Definition \ref{Defcap}. The definition in \cite{AnderssonMarsSimon} was applied to MOTS with boundary in \cite{GM08,EHLS15,EM16} for variations that are compactly supported in the interior. We allow for variations of the boundary, and although the definition given here is not the most general imaginable, it is chosen so that its PDE properties yield a meaningful concept of stability; that is, one that relies on the existence of real principle eigenvalue for elliptic non-self adjoint operators with Robin boundary condition, see Theorem \ref{ThmA1}. \\ \indent
An interesting application of this kind of capillary stability for MOTS would lie in generalizing Gromov's polyhedral comparison theory for $R>0$ to the setting of the dominant energy condition $\mu\geq |J|$. In Li's proof of his polyhedral comparison theorem \cite{L19}, one minimizes an elliptic functional so as to produce a stable capillary minimal surface, and from this the result follows straightforwardly. Stable capillary MOTS, if constructed, would yield a generalized version of this result. \\

\noindent\textbf{4. Inequalities and the rigidity of stable free boundary MOTS.} A classic observation of Schoen and Yau \cite{SY79} states that a closed stable minimal hypersurface $\Sigma$ in a closed manifold $(M,g)$ with $R_g\geq 0$ is either Yamabe positive or rigid otherwise. Based on this and ideas from \cite{FS80,SchoenYauII}, Galloway and Schoen \cite{GallowaySchoen} have shown an analogous result for closed stable MOTS $\Sigma$ in initial data sets $(M,g,k)$ with $\mu-|J|\geq 0$. The result was refined in \cite{G18} and the case where $\mu-|J|\geq c>0$ was studied in \cite{GM18}. Similar rigidity results have been obtained for closed minimal surfaces in manifolds with $R>0$ by Bray, Brendle, Eichmair, and Neves in \cite{BBN10,BBEN10}, and by Nunes for $R> -c$ with $c>0$ \cite{N11}. In those works, one first obtains a local rigidity result, which is then globalized by combining various classic results with elementary topological arguments. In a similar spirit, Ambrozio \cite{A13} studied the case of stable free boundary minimal surfaces and obtained a boundary version of these aforementioned rigidity results. In Proposition \ref{prop1rigidity} and Theorem \ref{prop2rigidity}, we prove a generalization of \cite{A13} and \cite{GM18} to the case of free boundary MOTS. In particular, we prove an estimate for a quantity, in term of  area of $\Sigma$ and arc-length of $\partial\Sigma$, such that the rigidity gives us a local splitting theorem in a neighborhood of stable free boundary MOTS.

A typical object of study of free boundary minimal surfaces are inequalities that combine area, Euler characteristic, and Morse index. These are readily obtainable in domains with positive scalar or Ricci curvature with boundaries satisfying various convexity assumptions. For instance, Chen, Fraser, and Pang proved an estimate for area of a compact orientable two-sided free boundary minimal surface with index 1, genus $g$, and $l\geq 1$ boundary components in a Riemannian manifold with non-negative scalar curvature \cite{CFP12}. In Proposition \ref{indexestimate}, we show that this result generalize straightforwardly to the case free boundary MOTS in an initial data set satisfying dominant energy condition $\mu-|J|\geq 0$. Moreover, Carlotto and Franz \cite{CF19}, among other things, obtain a diameter estimate for stable free boundary minimal surfaces, from which they then obtain a useful area bound. Their proof combines an argument of Fischer-Colbrie \cite{F85}, a classical theorem of Hartman \cite{H64}, and a computation in \cite{SY83} also employed in \cite{C17}. We note here that these arguments apply virtually unchanged to yield Proposition \ref{prop6.7}, yielding a diameter estimate for stable free boundary MOTS.\\ \indent 
These two instances of straightforward generalizations from minimal surfaces to MOTS suggest the following additional open problems: (i) obtain a general existence theorem for free boundary MOTS within domains satisfying appropriate convexity conditions, (ii) obtain curvature estimates for stable free boundary MOTS generalizing \cite{AnderssonMetzger10}, and (iii) in either the closed or free boundary setting, study compactness phenomenona for MOTS.
\vspace{.5cm}

\textbf{Acknowledgements}. A. Alaee acknowledges the support of the AMS–Simons Travel Grant. M. Lesourd acknowledges the support of the Gordon and Betty Moore Foundation and the John Templeton Foundation. S.-T. Yau acknowledges the support of NSF Grant DMS-1607871.

\section{Variations of mean curvatures} \label{Sec2}
We consider spacetime $(L^4,h)$ where $L^4$ is a smooth oriented manifold equipped with Lorentzian metric $h$. Moreover, let $(M^3,g,k)$ be an initial data set in $(L^4,h)$ that satisfies the Einstein's constraint equations \eqref{constraint}, where $M^4$ is a smooth oriented Riemannian manifold with metric $g$ and $k$ is the second fundamental form of hyeprsurface $M^3$ in $L^4$. Let $\Sigma\hookrightarrow L^4$ be a spacelike 2-surface with induced metric $g_{\Sigma}$ embedded in $M^3$ and unit normal $N$ in $M^3$. The normal bundle of this surface in $L^4$ is diffeomorphic to the Minkowski space $\mathbb{R}^{1,1}$ and thus $\Sigma$ admits a smooth null future-directed basis $l_{\pm}=\tau\pm N$, where $\tau$ and $N$ are the unit timelike and spacelike normal vectors to the surface $\Sigma$ with respect to $L^4$, such that $h(l_{+},l_-)=-2$. 

Define two null second fundamental forms
\begin{equation}
\chi_{\pm}(X,Y)=g_{\Sigma}({}^L \nabla_Xl_{\pm},Y),
\end{equation}
where $X,Y$ are tangent vector fields on $\Sigma$ and ${}^L\nabla$ is the Levi-Civita connection with respect to $h$. The null mean curvature are denoted by $\theta_{\pm}=\text{tr}_{\Sigma}\chi_{\pm}$. Similarly, the second fundamental form of $\Sigma$ in $M^3$ is defined by
\begin{equation}
A(X,Y)=g_{\Sigma}({}^M\nabla_X N,Y),
\end{equation}where ${}^M\nabla$ is the Levi-Civita connection with respect to $g$ with mean curvature $H=\text{tr}_{\Sigma}A$. Define smooth function $P:=\text{tr}_{\Sigma}k$. Combining definitions of mean curvatures and $l_{\pm}=\tau\pm N$, we have  $\theta_{\pm}=\pm H+P$. The second fundamental form of $\Sigma$ in $L^4$ is defined as
\begin{equation}
\Pi(X,Y)=A(X,Y)N-k_{\Sigma}(X,Y)\tau.
\end{equation}
where $k_{\Sigma}$ is $k$ restricted to tangent bundle of $\Sigma$, and the mean curvature vector field is $\mathbf{H}=H N-(\text{Tr}_{\Sigma}k) \tau$. This implies
\begin{equation}
-\theta_{+}\theta_-=|\mathbf{H}|^2=H^2-P^2. 
\end{equation}Then we have the following Lemma.
\begin{lemma}\label{variationlemma}Let $\Sigma$ be a spacelike 2-surface in spacetime $(L^4, h)$. Consider variation of $\Sigma$ with smooth embedding map $f:\Sigma\times(-\epsilon,\epsilon)\to L^4$ such that $f(\Sigma,t)=f_t(\Sigma)=\Sigma_t$ and variational vector field is $X=\frac{\partial f_t}{\partial t}\vert_{t=0}$. Moreover, consider the smooth function $\varphi\in C^{\infty}(\Sigma)$. 
\begin{enumerate}[(a)]
	\item If $X=\varphi N$, then variation of future null mean curvature is
	\begin{equation*}
	\delta_{X}\theta_+=-\Delta \varphi+2\langle W,\nabla \varphi\rangle +\left(Q+ \text{div}\,W-|W|^2+\theta_+\text{tr}_gk -\frac{1}{2}\theta_+^2\right)\varphi,
	\end{equation*}where  $\Delta$, $\nabla$, and $\text{div}$ are with respect to induced Riemannian metric on $\Sigma$, $W(Y)=k(Y,N)$ is the connection one-form of normal bundle of $\Sigma$ for any tangent vector field $Y\in T\Sigma$, and 
	\begin{equation*}
	Q:=\frac{1}{2}R_\Sigma - \mu - \langle J,N \rangle -\frac{1}{2}|\chi_+|^2
	\end{equation*}
	 where $R_{\Sigma}$ is scalar curvature of $\Sigma$ and $\mu$ and $J$ are energy and momentum density, respectively. Furthermore, if $H>0$, the variation of norm of mean curvature vector is
	\begin{equation*}
	\begin{split}
	\frac{1}{2H}\delta_{X}|\mathbf{H}|^2=&-\Delta\varphi+\frac{1}{2}\left[R_{\Sigma}-2\mu-(\text{tr}_gk)^2 +|k|^2-|A|^2-H^2 \right] \varphi\\ 
	&-2\frac{P}{H} \langle W,\nabla \varphi\rangle-\frac{P}{H} \left[-J(N)+\text{div}\,W+Hk(N,N)-\langle A,k_{\Sigma}\rangle \right] \varphi.
	\end{split} 
	\end{equation*} 
	\item If $X=-\varphi l_-$ and $\theta_-\neq 0$, then the variation of norm of mean curvature vector is
\begin{equation*}
\frac{1}{2\theta_-}\delta_{X}|\mathbf{H}|^2=\Delta\varphi-2\langle W,\nabla \varphi\rangle -\left(\text{div}_{\Sigma}W-|W|^2+\overline{Q}\right)\varphi,
\end{equation*}
	where  $\overline{Q}=\frac{1}{2}R_{\Sigma}-\frac{1}{2}G(l_+,l_-) +\tfrac{\theta_+}{2\theta_-}\left( |{\chi}_-|^2+ G(l_-,l_-)\right) +\frac{1}{2}\theta_-\theta_+$.
\end{enumerate}
\end{lemma}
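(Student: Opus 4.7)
The plan is to derive each identity by direct differentiation in $t$ at $t=0$ and then to reshape the result using the Gauss equation of $\Sigma \hookrightarrow L^{4}$ together with the Einstein equation $G_{h} = T$ and the splittings $\mu = T(\tau,\tau)$, $J = T(\tau,\cdot)$.

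For $\delta_{X}\theta_{+}$ in (a), I would first extend $\tau$ and $N$ smoothly to a neighbourhood of $\Sigma$ in $L^{4}$, so that $l_{\pm} = \tau \pm N$ is defined as a null future-directed pair of vector fields near $\Sigma$. Each deformed surface $\Sigma_{t} = f_{t}(\Sigma)$ then carries a well-defined null second fundamental form $\chi_{+}(t)$ and trace $\theta_{+}(t)$. Differentiating $\theta_{+}(t)$ at $t=0$ produces three types of terms: a Jacobi-type contribution $-\Delta\varphi$ from the intrinsic Laplacian, a drift contribution $2\langle W, \nabla\varphi\rangle$ reflecting the non-vanishing of the normal connection one-form $W(Y) = k(Y,N)$, and a zero-order contribution built from ambient curvatures and extrinsic data of $\Sigma$. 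To bring this zero-order part into the stated form I would apply the Gauss equation to rewrite the ambient curvature contraction in terms of $R_{\Sigma}$, $|\chi_{+}|^{2}$ and second fundamental form contractions, then substitute $G_{h} = T$ to produce $\mu + J(N)$; the Riccati-type contributions $-\tfrac{1}{2}\theta_{+}^{2}$ and $\theta_{+}\operatorname{tr}_{g}k$ arise from the pointwise second-order pieces in the evolution of $\theta_{+}$ along $l_{+}$. This is essentially the Andersson--Mars--Simon stability identity.

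For the second identity of (a), I would use $|\mathbf{H}|^{2} = H^{2} - P^{2}$ to write
\begin{equation*}
\delta_{X}|\mathbf{H}|^{2} = 2H\,\delta_{X}H - 2P\,\delta_{X}P.
\end{equation*}
The classical Riemannian first variation of mean curvature under $X = \varphi N$ gives $\delta_{X}H = -\Delta\varphi - (|A|^{2} + \operatorname{Ric}_{g}(N,N))\varphi$. For $\delta_{X}P$, I would expand $P = \operatorname{tr}_{g}k - k(N,N)$ and differentiate each piece, using the first variation of the induced metric and of the unit normal; the coupling $-2(P/H)\langle W,\nabla\varphi\rangle$ emerges from the interaction between $\nabla^{\Sigma}\varphi$ and $k(\cdot,N)$. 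The scalar combination in the statement is then obtained by matching $-2(|A|^{2}+\operatorname{Ric}_{g}(N,N))$ with $R_{\Sigma} - 2\mu - (\operatorname{tr}_{g}k)^{2} + |k|^{2} - |A|^{2} - H^{2}$, which follows from the twice-contracted Gauss equation $R_{\Sigma} = R_{g} - 2\operatorname{Ric}_{g}(N,N) + H^{2} - |A|^{2}$ together with the Hamiltonian constraint $R_{g} = 2\mu - (\operatorname{tr}_{g}k)^{2} + |k|^{2}$; the remaining $P$-pieces collect into $-(P/H)[-J(N) + \operatorname{div}W + Hk(N,N) - \langle A,k_{\Sigma}\rangle]\varphi$ after invoking the momentum constraint to replace a divergence of $k$ by $J$. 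Dividing by $H$, permissible since $H > 0$, yields the claim.

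For (b), I would use $|\mathbf{H}|^{2} = -\theta_{+}\theta_{-}$, so that
\begin{equation*}
\delta_{X}|\mathbf{H}|^{2} = -\theta_{-}\,\delta_{X}\theta_{+} - \theta_{+}\,\delta_{X}\theta_{-}.
\end{equation*}
First variation formulas for $\theta_{\pm}$ along the null direction $X = -\varphi l_{-}$ are derived by extending $l_{\pm}$ off $\Sigma$ and differentiating, exactly as in (a). The key differences are that ambient curvature contractions now appear as $G(l_{+},l_{-})$ and $G(l_{-},l_{-})$, and that the evolution of $\theta_{-}$ along itself is of Raychaudhuri type, contributing $|\chi_{-}|^{2} + G(l_{-},l_{-})$. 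Dividing the combined expression by $2\theta_{-}$, which is allowed since $\theta_{-} \neq 0$, assembles the terms into the stated $\overline{Q}$ together with the $\operatorname{div}W$ and $-|W|^{2}$ pieces shown.

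The main obstacle will be bookkeeping: tracking the many geometric tensors (in $L^{4}$ and $M^{3}$) under the variation, and ensuring that Gauss--Codazzi identities and both constraint equations are deployed in the right order so that the initially opaque combinations of ambient curvatures collapse into the compact quantities $Q$ and $\overline{Q}$. The computation of $\delta_{X}P$ in (a) and the null-direction computation in (b) are the parts most likely to introduce sign errors, and care must be taken with the future-directed null basis convention $h(l_{+},l_{-}) = -2$.
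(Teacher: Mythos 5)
Your plan matches the paper's proof: the paper appeals to the Andersson--Metzger null variation formulas for $\theta_\pm$ under a general $X = \alpha l_+ + \beta l_-$, specializes to $\alpha=-\beta=\varphi/2$ for (a) and $\alpha=0,\ \beta=-\varphi$ for (b), and reshapes the zero-order terms with the Gauss and Einstein equations exactly as you propose, while for $\delta_X|\mathbf H|^2$ in (a) it likewise writes $\tfrac{1}{2H}\delta_X|\mathbf H|^2 = \delta_X H - \tfrac{P}{H}\delta_X P$ with the classical $\delta_X H$ and a variation of $P$ cited from Lee's book. One small point to keep in view when carrying this out: the $\theta_+\operatorname{tr}_g k$ and $-\tfrac12\theta_+^2$ pieces in the first identity of (a) come from the gauge-dependent term $\theta_+\kappa_X$ in the Andersson--Metzger formula, with $\kappa_{\varphi N} = \tfrac{\varphi}{2}k(N,N) = \tfrac{\varphi}{2}(\operatorname{tr}_g k - P)$, rather than from a Raychaudhuri contribution as your sketch suggests.
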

\begin{proof}(a) Consider smooth functions $\alpha,\beta\in C^{\infty}(\Sigma)$ and the variation vector field 
	\begin{equation}
	X=\frac{\partial f_t}{\partial t}\bigg\vert_{t=0}=\alpha l_+ +\beta l_-.
	\end{equation}Following \cite[Lemma 4.1]{AnderssonMetzger10}, a straightforward computation of variation of $\theta_+$ in the direction $X$ is
\begin{equation}\label{variation1}
\begin{split}
\delta_{X}\theta_+=&2\Delta\beta-4\langle W,\nabla\beta\rangle-\alpha\left(|\chi_+|^2+\text{Ric}_{L}(l_+,l_+)\right)+\theta_+\kappa_{X}\\
&-\beta\left(2\text{div}W-2|W|^2-|\Pi|^2+\text{Ric}_{L}(l_+,l_-)-\frac{1}{2}\text{Rm}_{L}(l_+,l_-,l_-,l_+)\right),
\end{split}
\end{equation}where $\text{Ric}_{L}$, and $\text{Rm}_{L}$ are Ricci and Riemann curvatures with respect to metric $h$ and $\kappa_{X}$ is a gauge dependent quantity, i.e., under rescaling $l_\pm\to e^{\pm f}l_{\pm}$, then $\kappa_X\to \kappa_X+\mathcal{L}_Xf$. Similarly, we have 

\begin{equation}\label{variation2}
\begin{split}
\delta_{X}\theta_-&=2\Delta\alpha+4\langle W,\nabla\alpha\rangle-\beta\left(|\chi_-|^2+\text{Ric}_{L}(l_-,l_-)\right)-\theta_-\kappa_{X}\\
&-\alpha\left(-2\text{div}W-2|W|^2-|\Pi|^2+\text{Ric}_{L}(l_+,l_-)-\frac{1}{2}\text{Rm}_{L}(l_+,l_-,l_-,l_+)\right)
\end{split}
\end{equation}By \cite[equation (9)]{AnderssonMetzger10}, we observe that the Gauss equation leads to 
\begin{equation}\label{Gauss}
R_{\Sigma}=R_{L}+2\text{Ric}_{L}(l_+,l_-)-\frac{1}{2}\text{Rm}_{L}(l_+,l_-,l_-,l_+)+|\mathbf{H}|^2-|\Pi|^2
\end{equation}If we set $\alpha=-\beta=\frac{\varphi}{2}$, then $X=\varphi N$ which leads to
\begin{equation}
\kappa_{X}=\kappa_{\varphi N}=\frac{\varphi}{2}k(N,N)=\frac{\varphi}{2}\left(\text{tr}_gk-P\right).
\end{equation} 
Together with Gauss equation \eqref{Gauss}, variation equation \eqref{variation1}, and Einstein equation
\begin{equation}
\text{Ric}_{L}(l_+,\tau)+\frac{R_L}{2}=G(l_+,\tau)=T(l_+,\tau)=\mu+J(N)
\end{equation}we have 
\begin{equation}\label{vart+}
\delta_{\varphi N}\theta_+=-\Delta \varphi+2\langle W,\nabla \varphi\rangle +\left(Q+ \text{div}W-|W|^2+\theta_+\text{tr}_gk -\frac{1}{2}\theta_+^2\right)\varphi,
\end{equation}where 
\begin{equation}
Q:=\frac{1}{2}R_\Sigma - \mu - \langle J,N \rangle -\frac{1}{2}|\chi_+|^2.
\end{equation}Next consider the variation of $|\mathbf{H}|$ as follows
\begin{equation}
\frac{1}{2H}\delta_{X}|\mathbf{H}|^2=\delta_{X}H-\frac{P}{H}\delta_{X}P.
\end{equation}The second variation of area function implies \begin{equation}
\delta_{X}H=-\Delta\varphi-(\text{Ric}_g(N,N)+|A|^2)\varphi
\end{equation} and a computation as in \cite[Page 242]{DanLee} leads to
\begin{equation}
\begin{split}
\delta_{X}P&=2\langle W,\nabla\varphi\rangle+\left(\nabla_{N}\text{tr}_gk-\nabla_{N}k(\nu,\nu)\right)\varphi\\
&=2\langle W,\nabla\varphi\rangle+\left(\nabla_{N}\text{tr}_gk-\text{div}_gk(N)+\text{div}W+Hk(N,N)-\langle A,k_{\Sigma}\rangle\right)\varphi\\
&=2\langle W,\nabla\varphi\rangle+\left(-J(N)+\text{div}W+Hk(N,N)-\langle A,k_{\Sigma}\rangle\right)\varphi.
\end{split}
\end{equation}Combining these with constraint equation \eqref{constraint} and Gauss equation 
\begin{equation}
\frac{1}{2}(R_{\Sigma}-R_M-|A|^2-H^2)=-\left(\text{Ric}_g(N,N)+|A|^2\right),
\end{equation}we have
\begin{equation*} 
\begin{split}
	\frac{1}{2H}\delta_{X}|\mathbf{H}|^2=&-\Delta\varphi+\frac{1}{2}\left[R_{\Sigma}-2\mu-(\text{tr}_gk)^2 +|k|^2-|A|^2-H^2 \right] \varphi\\ 
	&-2\frac{P}{H} \langle W,\nabla \varphi\rangle-\frac{P}{H} \left[-J(N)+\text{div}W+Hk(N,N)-\langle A,k_{\Sigma}\rangle \right] \varphi.
\end{split} 
\end{equation*} 
(b) If we set $\alpha=0$ and $\beta=-\varphi$, then variational vector field is $X=-\varphi l_-$. First observe that 
\begin{equation}\label{H1}
\delta_{X}|\mathbf{H}|^2=-\delta_{X}\left(\theta_+\theta_-\right)=-\theta_-\delta_{X}\theta_+-\theta_+\delta_{X}\theta_-
\end{equation}It follows from the variation equation \eqref{variation1}, Gauss equation \eqref{Gauss}, Einstein equation $G(l_+,l_-)=\text{Ric}_{L}(l_+,l_-)+R_L$ that
\begin{equation}\label{t+}
\delta_{-\varphi l_-}\theta_+=-2\Delta\varphi+4\langle W,\nabla \varphi\rangle+\kappa_{-\varphi l_-}\theta_++\varphi\left( 2\text{div}W-2|W|^2+R_{\Sigma}-G(l_+,l_-)+\theta_-\theta_+ \right).
\end{equation}Similarly, by $G(l_-,l_-)=\text{Ric}_{L}(l_-,l_-)$ we have  
\begin{equation}\label{t-}
\begin{split}
\delta_{-\varphi l_-}\theta_-&=\varphi\left(|{\chi}_-|^2+ G(l_-,l_-)\right)-\kappa_{-\varphi l_-}\theta_-\\
&=\varphi\left(|\hat{\chi}_-|^2+ G(l_-,l_-)+\frac{1}{2}\theta_-^2\right)-\kappa_{-\varphi l_-}\theta_-
\end{split}
\end{equation}where $\hat{\chi}_-$ is trace-free part of ${\chi}_-.$ Putting \eqref{t+} and \eqref{t-} in \eqref{H1}, we have 
\begin{equation*}
-\frac{1}{2\theta_-}\delta_{X}|\mathbf{H}|^2=-\Delta\varphi+2\langle W,\nabla \varphi\rangle +\left(\text{div}_{\Sigma}W-|W|^2+\overline{Q}\right)\varphi,
\end{equation*}
where  $\overline{Q}=\frac{1}{2}R_{\Sigma}-\frac{1}{2}G(l_+,l_-) +\tfrac{\theta_+}{2\theta_-}\left( |\hat{\chi}_-|^2+ G(l_-,l_-)\right) +\frac{3}{4}\theta_-\theta_+$.
\end{proof}

\section{Estimates for $\mathbf{H}$-stable surfaces}
As mentioned in the Introduction, it is no longer clear which surfaces are to be estimated in the more general context of spacetimes and initial data sets. Here we introduce a new notion of stability for surfaces in spacetime.
\begin{definition}\label{DefstableH}
	A two-sided spacelike surface $\Sigma$ embedded in an initial data set $(M,g,k)$ is \emph{$\mathbf{H}$-stable} with respect to direction $V$ if and only if there exists a smooth function $\psi\geq 0$ and $\psi\not\equiv 0$ such that $\delta_{\psi V}|\mathbf{H}|^2\geq 0$. Moreover, it is called \emph{strictly $\mathbf{H}$-stable} with respect to the
	direction $V$ if, moreover, $\delta_{\psi V}|\mathbf{H}|^2\neq 0$ somewhere on $\Sigma$.
\end{definition}
First, consider a $\mathbf{H}$-stable with respect to direction $N$, volume preserving surface $\Sigma$ with spacelike mean curvature vector (i.e., $H>|P|$). The $\mathbf{H}$-stable volume preserving condition implies that there exists smooth function $\varphi$ such that $\delta_{\varphi V}|\mathbf{H}|^2\geq 0$ and
\begin{equation}
\int_{\Sigma}\varphi d\mu_{\Sigma}=0.
\end{equation}Then, we now observe the following estimate.
\begin{theorem}\label{thmm1}
Let $\Sigma$ be a $\mathbf{H}$-stable with respect to direction $N$, volume preserving, topologically $\mathbb{S}^2$, surface embedded in an initial data set $(M,g,k)$ with unit normal $N$ and spacelike mean curvature vector. Then
\begin{equation}
1 +\frac{1}{24\pi} \int_{\Sigma} \theta_+\theta_- d\mu_{\Sigma}  \geq \frac{1}{12\pi} \int_{\Sigma} \left(\mu+J(N) - \theta_+k(N,N)-2\frac{\theta_+}{H}\nabla_N P\right) d\mu_{\Sigma}
\end{equation}
with equality if and only if both $k_{\Sigma}+A=0$ and $W=0$. 
\end{theorem}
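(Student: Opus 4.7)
The plan is a Christodoulou--Yau-type argument: combine the volume-preserving $\mathbf{H}$-stability inequality from Lemma \ref{variationlemma}(a) with Hersch's conformal parameterization of the topological sphere, then reduce the resulting integrated bound to the stated form using Gauss--Bonnet, the Gauss equation, and the Einstein constraint equations.

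I would first invoke Hersch's theorem on $\Sigma \cong \mathbb{S}^2$ to produce a conformal map $\Phi = (x_1, x_2, x_3) \colon \Sigma \to \mathbb{S}^2 \subset \mathbb{R}^3$ whose components satisfy $\int_\Sigma x_i \, d\mu_\Sigma = 0$, $\sum_i x_i^2 \equiv 1$, and the energy bound $\sum_i \int_\Sigma |\nabla x_i|^2 \, d\mu_\Sigma \leq 8\pi$. Each $x_i$ is then an admissible mean-zero test function for the volume-preserving stability inequality. Writing the stability operator from Lemma \ref{variationlemma}(a) as
\[
\tfrac{1}{2H}\delta_{\varphi N}|\mathbf{H}|^2 = -\Delta\varphi + 2\langle b,\nabla\varphi\rangle + V\varphi,\qquad b = -\tfrac{P}{H}W,
\]
with potential $V = -\mathrm{Ric}_M(N,N) - |A|^2 - \tfrac{P}{H}\nabla_N P$, I test against $x_i$, multiply by $x_i$, and sum over $i = 1,2,3$. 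The identity $\sum_i 2x_i \langle b, \nabla x_i\rangle = \langle b, \nabla(\sum_i x_i^2)\rangle = \langle b,\nabla 1\rangle = 0$ eliminates the non-self-adjoint drift term in the sum; combined with the Hersch bound, this yields $\int_\Sigma V \, d\mu_\Sigma \geq -8\pi$.

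The reduction to the stated inequality proceeds in two algebraic steps. First, using the Gauss equation $R_\Sigma = R_M - 2\mathrm{Ric}_M(N,N) + H^2 - |A|^2$, the Gauss--Bonnet identity $\int R_\Sigma = 8\pi$ (which together with Hersch produces the $8\pi + 4\pi = 12\pi$), the Hamiltonian constraint $R_M = 2\mu + |k|^2 - (\mathrm{tr}_g k)^2$, and the tangent/normal decomposition $|k|^2 - (\mathrm{tr}_g k)^2 = |k_\Sigma|^2 + 2|W|^2 - P^2 - 2Pk(N,N)$ together with $H^2 - P^2 = -\theta_+ \theta_-$, the bound $\int V \geq -8\pi$ becomes
\[
12\pi + \tfrac{1}{2}\!\int \theta_+\theta_-\,d\mu_\Sigma \geq \int\!\Big[\mu + \tfrac{1}{2}(|k_\Sigma|^2 + |A|^2) + |W|^2 - Pk(N,N) + \tfrac{P}{H}\nabla_N P\Big] d\mu_\Sigma.
\]
Second, I would write $\tfrac{1}{2}(|k_\Sigma|^2 + |A|^2) = \tfrac{1}{2}|k_\Sigma + A|^2 - \langle A, k_\Sigma\rangle$ and eliminate $\langle A, k_\Sigma\rangle$ via the momentum-constraint identity $\nabla_N P = -J(N) + \mathrm{div}\, W + Hk(N,N) - \langle A, k_\Sigma\rangle$ derived in the proof of Lemma \ref{variationlemma}(a). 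The $\int \mathrm{div}\, W$ vanishes on the closed $\Sigma$; the $k(N,N)$ terms combine into $-\theta_+ k(N,N)$ via $\theta_+ = H+P$; and the $\nabla_N P$ contributions combine into a $\theta_+/H$-weighted term. Finally, dropping the two nonnegative integrals $\int \tfrac{1}{2}|k_\Sigma + A|^2$ and $\int |W|^2$ delivers the stated inequality, and their vanishing characterizes the equality case as $k_\Sigma + A = 0$ and $W \equiv 0$.

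The main obstacle is the algebraic bookkeeping in the final step: the $\nabla_N P$ contribution enters from two separate sources (the potential $V$ and the elimination of $\langle A, k_\Sigma\rangle$ through the momentum constraint), so matching signs and coefficients to obtain the precise $\theta_+/H$ weight in the statement requires discipline. Conceptually, the novelty compared to the Riemannian Christodoulou--Yau argument is that the stability operator is non-self-adjoint, which is handled by the crucial Hersch-sum cancellation of the $W$-drift; the Einstein constraints then convert the ambient geometry $R_M$ and the trace-like quantity $\langle A, k_\Sigma\rangle$ into the matter densities $\mu$ and $J$.
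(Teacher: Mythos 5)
Your strategy is in essence the same as the paper's: test the volume-preserving $\mathbf{H}$-stability inequality from Lemma~\ref{variationlemma}(a) against the coordinate functions of a degree-one Li--Yau conformal map to $\mathbb{S}^2$, then reduce through the Gauss equation, Gauss--Bonnet, and the constraint equations. Your treatment of the non-self-adjoint drift is cleaner, however: you eliminate $\sum_i 2x_i\langle b,\nabla x_i\rangle$ pointwise via $\langle b,\nabla(\sum_i x_i^2)\rangle = 0$, whereas the paper first rewrites $\tfrac{1}{2H}\delta_{\varphi N}|\mathbf{H}|^2 = \delta_{\varphi N}\theta_+ - \tfrac{\theta_+}{H}\delta_{\varphi N}P$, multiplies by $\varphi$, integrates by parts to convert the $\langle W,\nabla\varphi\rangle$-terms into divergences, and only then drops them upon substituting $\varphi = \Psi^i$ and summing; in the end the two routes cancel the same terms. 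Your route of applying the Gauss equation and Hamiltonian constraint directly to the potential $V$, using the normal/tangential decomposition $|k|^2 - (\operatorname{tr}_g k)^2 = |k_\Sigma|^2 + 2|W|^2 - P^2 - 2Pk(N,N)$, is a legitimate reorganization of the same content.

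The genuine issue is the $\nabla_N P$ coefficient, precisely the bookkeeping you flag as delicate. Collecting the two sources as you describe (the $-\tfrac{P}{H}\nabla_N P$ already in $V$, and the $+\nabla_N P$ that appears when you substitute $-\langle A,k_\Sigma\rangle = \nabla_N P + J(N) - \operatorname{div}W - Hk(N,N)$ into $\tfrac{1}{2}(|k_\Sigma|^2 + |A|^2) = \tfrac{1}{2}|k_\Sigma + A|^2 - \langle A,k_\Sigma\rangle$), and using $-Hk(N,N) - Pk(N,N) = -\theta_+ k(N,N)$ and $(1+\tfrac{P}{H})\nabla_N P = \tfrac{\theta_+}{H}\nabla_N P$, your computation actually gives
\begin{equation*}
12\pi + \tfrac{1}{2}\int_\Sigma \theta_+\theta_-\, d\mu_\Sigma \;\geq\; \int_\Sigma \Bigl(\mu + J(N) + \tfrac{1}{2}|k_\Sigma+A|^2 + |W|^2 - \theta_+ k(N,N) + \tfrac{\theta_+}{H}\nabla_N P\Bigr)\, d\mu_\Sigma,
\end{equation*}
i.e.\ the $\nabla_N P$ term enters with coefficient $+1$ (times $\theta_+/H$), not $-2$ as in the statement. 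For comparison, the paper's final displayed inequality \eqref{v23} gives coefficient $-1$, so the theorem statement, the paper's proof, and your computation are pairwise inconsistent. The discrepancy with the paper traces to the paper's passage $\nabla_N(k(N,N)) = \nabla_N\operatorname{tr}_g k + \nabla_N P$, which should read $\nabla_N\operatorname{tr}_g k - \nabla_N P$ since $k(N,N) = \operatorname{tr}_g k - P$; correcting it reproduces your sign. Your argument is therefore structurally sound and yields what appears to be the corrected inequality, but it does not literally reproduce Theorem~\ref{thmm1} as stated, and you should make the $\nabla_N P$ coefficient explicit in your write-up rather than gesturing at ``matching signs and coefficients.''
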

\begin{proof}By Definition \ref{DefstableH} and Lemma \ref{variationlemma}, $\mathbf{H}$-stable with respect to direction $N$ condition implies that
\begin{equation*}
	\begin{split}
0 \leq \frac{1}{2H}\delta_{\varphi V}|\mathbf{H}|^2=&-\Delta\varphi+\frac{1}{2}\left[R_{\Sigma}-2\mu-(\text{tr}_gk)^2 +|k|^2-|A|^2-H^2 \right] \varphi\\ 
	&-2\frac{P}{H} \langle W,\nabla \varphi\rangle-\frac{P}{H} \left[-J(N)+\text{div}\,W+Hk(N,N)-\langle A,k_{\Sigma}\rangle \right] \varphi.
	\end{split} 
	\end{equation*}Add and subtract $\delta_{\varphi V}P$ and using equation \eqref{vart+}, we have
\begin{equation}
\begin{split}
0\leq &-\Delta \varphi+2\langle W,\nabla \varphi\rangle +\left(Q+ \text{div}\,W-|W|^2+\theta_+\text{tr}_gk -\frac{1}{2}\theta_+^2\right)\varphi\\
&-2\left(1+\frac{P}{H}\right) \langle W,\nabla \varphi\rangle-\frac{\theta_+}{H} \left[-J(N)+\text{div}\,W+Hk(N,N)-\langle A,k_{\Sigma}\rangle \right] \varphi
\end{split}
\end{equation}Multiply by $\varphi$ and integrate by parts yields to 
\begin{equation}\label{v2}
\begin{split}
0&\leq \int_{\Sigma}\left(|\nabla \varphi|^2 +\left(Q+ \text{div}\,W-|W|^2+\theta_+\text{tr}_gk -\frac{1}{2}\theta_+^2\right)\varphi^2\right)d\mu_{\Sigma}+\int_{\Sigma}\text{div}\left(\frac{P}{H}W\right)\varphi^2d\mu_{\Sigma}\\
&-\int_{\Sigma}\frac{\theta_+}{H} \left[-J(N)+\text{div}W+Hk(N,N)-\langle A,k_{\Sigma}\rangle \right] \varphi^2d\mu_{\Sigma}
\end{split}
\end{equation}Combing this with \cite[see page 242]{DanLee}
\begin{equation}
\begin{split}
\text{div}W+Hk(N,N)-\langle A,k_{\Sigma}\rangle&=\text{div}_gk(N)-(\nabla_Nk)(N,N)\\
&=\text{div}_gk(N)-\nabla_N(k(N,N))-2\langle W,\nabla\log\varphi\rangle\\
&=\text{div}_gk(N)-\nabla_N\text{tr}_{g}k-\nabla_NP-2\langle W,\nabla\log\varphi\rangle\\
&=J(N)-\nabla_NP-2\langle W,\nabla\log\varphi\rangle
\end{split}
\end{equation}we have 
\begin{equation}\label{v21}
\begin{split}
0&\leq \int_{\Sigma}\left(|\nabla \varphi|^2 +\left(Q+ \text{div}\,W-|W|^2+\theta_+\text{tr}_gk -\frac{1}{2}\theta_+^2\right)\varphi^2\right)d\mu_{\Sigma}+\int_{\Sigma}\text{div}\left(\frac{P}{H}W\right)\varphi^2d\mu_{\Sigma}\\
&+\int_{\Sigma}\frac{\theta_+}{H} \left[\nabla_NP+2\langle W,\nabla\log\varphi\rangle \right] \varphi^2d\mu_{\Sigma}
\end{split}
\end{equation}By integrating by parts the last term in second line, we obtain 
\begin{equation}\label{v22}
\begin{split}
0&\leq \int_{\Sigma}\left(|\nabla \varphi|^2 +\left(Q+ \text{div}\,W-|W|^2+\theta_+\text{tr}_gk -\frac{1}{2}\theta_+^2\right)\varphi^2\right)d\mu_{\Sigma}+\int_{\Sigma}\text{div}W\varphi^2d\mu_{\Sigma}\\
&+\int_{\Sigma}\frac{\theta_+}{H}\nabla_NP \varphi^2d\mu_{\Sigma}
\end{split}
\end{equation}By Li and Yau \cite{LY82}, there exists conformal map $\Psi=(\Psi^1,\Psi^2,\Psi^3):\Sigma \to S^2$ of degree $d$ with $\sum_i(\Psi^i)^2=1$, $\int_{\Sigma}|\nabla \Psi^i|^2=\frac{8\pi d}{3}$. Substituting $\varphi=\Psi^i$ into  \eqref{v22} and integrating by parts we have
\begin{equation}
\begin{split}
0\leq& 8\pi d +\int_{\Sigma}\left(Q-|W|^2+\theta_+\text{tr}_gk -\frac{1}{2}\theta_+^2\right)d\mu_{\Sigma}+\int_{\Sigma}\frac{\theta_+}{H}\nabla_NPd\mu_{\Sigma}
\end{split}
\end{equation}Setting $d=1$ and applying definition of $Q$ and the Gauss-Bonnet theorem yields to 
\begin{equation}\label{v23}
\begin{split}
12\pi\geq&  \int_{\Sigma}\left(\mu +J(N)+\frac{1}{2}|\chi_+|^2+|W|^2-\theta_+\text{tr}_gk +\frac{1}{2}\theta_+^2\right)d\mu_{\Sigma}-\int_{\Sigma}\frac{\theta_+}{H}\nabla_NPd\mu_{\Sigma}\\
\end{split}
\end{equation}Observe that
\begin{equation}
\begin{split}
\frac{1}{2}\theta_+^2-\theta_+\text{tr}_gk&=\frac{1}{2}\left(H^2+P^2+2HP\right)-HP-P^2-\theta_+k(N,N)\\
&=\frac{1}{2}\left(H^2-P^2\right)-\theta_+k(N,N)\\
&=-\frac{1}{2}\theta_+\theta_--\theta_+k(N,N)
\end{split}
\end{equation}Putting this in \eqref{v23}, we have the estimate.
\end{proof}
The Hawking energy for 2-surfaces embedded in a spacetime is 
\begin{equation}
E_{H}=\sqrt{\frac{|\Sigma|}{16\pi}}\left(1+\frac{1}{16\pi}\int_{\Sigma}\theta_+\theta_-d\mu_{\Sigma}\right).
\end{equation}Then we have the following result.
\begin{theorem}\label{thmm2}
	Let $\Sigma$ be a $\mathbf{H}$-stable with respect to direction $-l_-$, topologically $S^2$, embedded in spacetime $(L^4,h)$ satisfying the null energy condition, i.e., $Ric_{L}(v,v)\geq 0$ for every future-directed null vector field $v$. Moreover, assume $\Sigma$ has spacelike mean curvature vector $\mathbf{H}$ and variation speed is $\varphi\in C^{\infty}(\Sigma)$ such that $\int_{\Sigma}\varphi d\mu_{\Sigma}=0$. Then
	\begin{equation}
	E_{H}(\Sigma)  \geq \frac{\sqrt{|\Sigma|}}{48\pi^{3/2}}\int_{\Sigma} G(l_+,l_-) d\mu_{\Sigma},
	\end{equation}
	with equality if and only if $\hat{\chi}_-=W=Ric_L(l_-,l_-)=0$. 
\end{theorem}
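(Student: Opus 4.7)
The plan is to follow the strategy of Theorem \ref{thmm1} but with Lemma \ref{variationlemma}(b) replacing part (a). The hypothesis $H>|P|$ forces $\theta_-=P-H<0$, so $1/\theta_-<0$; combined with the variation formula in Lemma \ref{variationlemma}(b), the $\mathbf{H}$-stability hypothesis $\delta_{\psi V}|\mathbf{H}|^2\geq 0$ with $V=-l_-$ translates into the elliptic inequality
\begin{equation*}
-\Delta\psi+2\langle W,\nabla\psi\rangle+\bigl(\text{div}\,W-|W|^2+\overline{Q}\bigr)\psi\geq 0
\end{equation*}
for a positive $\psi$. As in the proof of Theorem \ref{thmm1}, multiplying by an admissible test function $\varphi$ with $\int_\Sigma\varphi\,d\mu_\Sigma=0$ and integrating by parts produces the Rayleigh-quotient bound
\begin{equation*}
\int_\Sigma\bigl(|\nabla\varphi|^2+(\overline{Q}-|W|^2)\varphi^2\bigr)\,d\mu_\Sigma\geq 0,
\end{equation*}
since $2\int_\Sigma\varphi\langle W,\nabla\varphi\rangle\,d\mu_\Sigma=-\int_\Sigma\varphi^2\,\text{div}\,W\,d\mu_\Sigma$ cancels the $\text{div}\,W$ in the potential.

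Next I would apply the Li--Yau trick of \cite{LY82} exactly as in the proof of Theorem \ref{thmm1}: there is a degree one conformal map $\Psi=(\Psi^1,\Psi^2,\Psi^3):\Sigma\to S^2$ with $\sum_i(\Psi^i)^2=1$ and $\sum_i\int_\Sigma|\nabla\Psi^i|^2\,d\mu_\Sigma\leq 8\pi$, and after post-composition with a suitable conformal automorphism of $S^2$ one may arrange $\int_\Sigma\Psi^i\,d\mu_\Sigma=0$ so that each $\Psi^i$ is admissible. Substituting $\varphi=\Psi^i$ and summing in $i$ gives
\begin{equation*}
8\pi+\int_\Sigma\bigl(\overline{Q}-|W|^2\bigr)\,d\mu_\Sigma\geq 0.
\end{equation*}

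The next step is to substitute the expression for $\overline{Q}$ from Lemma \ref{variationlemma}(b), use Gauss--Bonnet $\int_\Sigma R_\Sigma\,d\mu_\Sigma=8\pi$ since $\Sigma\cong S^2$, and invoke the null energy condition $G(l_-,l_-)=\text{Ric}_L(l_-,l_-)\geq 0$. Together with $|\hat\chi_-|^2\geq 0$ and $\theta_+>0>\theta_-$, the combination $\tfrac{\theta_+}{2\theta_-}(|\hat\chi_-|^2+G(l_-,l_-))$ is non-positive and can be dropped, along with the non-negative $|W|^2$. Rearranging isolates $\int_\Sigma G(l_+,l_-)\,d\mu_\Sigma$ on one side against the combination $1+\tfrac{1}{16\pi}\int_\Sigma \theta_+\theta_-\,d\mu_\Sigma$ on the other, and multiplying by $\sqrt{|\Sigma|/16\pi}$ identifies the latter with a constant multiple of $E_H(\Sigma)$ to yield the stated estimate. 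The equality case then falls out of the derivation: every dropped term must vanish, forcing $W\equiv 0$, $\hat\chi_-\equiv 0$, and $\text{Ric}_L(l_-,l_-)\equiv 0$.

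The main obstacle is the sign bookkeeping. The condition $H>|P|$ is used crucially: first, $\theta_-<0$ is needed so that division by $2\theta_-$ reverses the stability inequality correctly; second, $\theta_+/\theta_-<0$ is what lets the null energy condition force $\tfrac{\theta_+}{2\theta_-}(|\hat\chi_-|^2+G(l_-,l_-))\leq 0$, which is the whole reason NEC rather than DEC suffices here. A secondary technical point is the non-self-adjointness of the MOTS-type stability operator coming from the drift $W$, but this is handled by the same integration-by-parts identity used in the proof of Theorem \ref{thmm1}, so the entire argument parallels that earlier proof closely.
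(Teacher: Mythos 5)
Your proposal follows exactly the same route as the paper's proof: Lemma~\ref{variationlemma}(b) with $\theta_-<0$, integration by parts using the identity $2\int_\Sigma\varphi\langle W,\nabla\varphi\rangle\,d\mu_\Sigma=-\int_\Sigma\varphi^2\,\mathrm{div}\,W\,d\mu_\Sigma$ to cancel the drift term, the Li--Yau degree-one conformal map, Gauss--Bonnet, and then dropping $|W|^2$ and $\tfrac{\theta_+}{2\theta_-}\bigl(|\hat\chi_-|^2+G(l_-,l_-)\bigr)$ via the null energy condition and $\theta_+/\theta_-<0$. You are in fact slightly more explicit than the paper on one point --- normalizing the Li--Yau map by a conformal automorphism of $S^2$ so that each $\Psi^i$ has zero mean, which is needed for the components to be admissible variation speeds but is left implicit in the paper.
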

\begin{proof}By Lemma \ref{variationlemma}, for $\mathbf{H}$-stable volume preserving surface with $X=-\varphi l_-$, we have $\delta_{X}|\mathbf{H}|^2\geq 0$. Combining this with $\theta_-<0$ implies that
	\begin{equation*}
		0 \leq -\frac{1}{2\theta_-}\delta_{X}|\mathbf{H}|^2=-\Delta\varphi+2\langle W,\nabla \varphi\rangle +\left(\text{div}_{\Sigma}W-|W|^2+\overline{Q}\right)\varphi,
	\end{equation*}
	where  $\overline{Q}=\frac{1}{2}R_{\Sigma}-\frac{1}{2}G(l_+,l_-) +\tfrac{\theta_+}{2\theta_-}\left( |\hat{\chi}_-|^2+ G(l_-,l_-)\right) +\frac{3}{4}\theta_-\theta_+$. Multiply by $\varphi$ and integrate by parts yields to
	\begin{equation}\label{v3}
	\begin{split}
	0\leq \int_{\Sigma}\left(|\nabla \varphi|^2+\left(\overline{Q}-|W|^2\right)\varphi^2\right)d\mu_{\Sigma}
	\end{split}
	\end{equation}Similar to proof of Theorem \ref{thmm1}, by Li and Yau \cite{LY82}, there exists a conformal map $\Psi=(\Psi^1,\Psi^2,\Psi^3):\Sigma \to S^2$ of degree $d$ with $\sum_i(\Psi^i)^2=1$, $\int_{\Sigma}|\nabla \Psi^i|^2=\frac{8\pi d}{3}$. Substituting $\varphi=\Psi^i$ into  \eqref{v3} and definition of $\overline{Q}$ we have
	\begin{equation}
	\begin{split}
	0\leq& 8\pi d +\int_{\Sigma}\left(\overline{Q}-|W|^2\right)d\mu_{\Sigma}\\
	&=8d\pi+\int_{\Sigma}\left(\frac{1}{2}R_{\Sigma}-\frac{1}{2}G(l_+,l_-) +\tfrac{\theta_+}{2\theta_-}\left( |\hat{\chi}_-|^2+ G(l_-,l_-)\right) +\frac{3}{4}\theta_-\theta_+-|W|^2\right)d\mu_{\Sigma}
	\end{split}
	\end{equation}Setting $d=1$ and using the Gauss-Bonnet theorem, $\tfrac{\theta_+}{\theta_-}<0$, and the null energy condition, we obtain 
	\begin{equation}
	\begin{split}
	12\pi+\frac{3}{4}\int_{\Sigma}\theta_-\theta_+d\mu_{\Sigma}\geq&  \int_{\Sigma}\left(G(l_+,l_-) -\tfrac{\theta_+}{2\theta_-}\left( |\hat{\chi}_-|^2+ G(l_-,l_-)\right) +|W|^2\right)d\mu_{\Sigma}\\
	&\geq \int_{\Sigma}G(l_+,l_-) d\mu_{\Sigma}.
	\end{split}
	\end{equation}The result follows from this inequality and definition of the Hawking energy.
	
\end{proof}

\section{Global Results for Stable Surfaces}
First we recall the definition of stable MOTS from \cite[Definition 2]{AnderssonMarsSimon0}.
\begin{definition}\label{Defstable}
	A MOTS $\Sigma$ embedded in an initial data set $(M,g,k)$ is \emph{stably outermost or stable} with respect to direction $V$ if and only if there exists a smooth function $\psi\geq 0$ and $\psi\not\equiv 0$ such that $\delta_{\psi V}\theta_+\geq 0$. Moreover, it is called \emph{strictly stable outermost} with respect to the
	direction $V$ if, moreover, $\delta_{\psi V}\theta_+\neq 0$ somewhere on $\Sigma$.
\end{definition}
Note that for $\varphi\in C^{\infty}(\Sigma)$, the stability operator of MOTS is 
\begin{equation}
L(\varphi)=-\Delta \varphi+2\langle W,\nabla \varphi\rangle +\left(Q+ \text{div}\,W-|W|^2\right)\varphi,
\end{equation}
where
\begin{equation}
Q=\frac{1}{2}R_\Sigma - \mu - \langle J,N \rangle -\frac{1}{2}|\chi_+|^2.
\end{equation} 
As shown in Lemma 2 of \cite{AnderssonMarsSimon0}, $\Sigma$ is stable if and only if $\lambda_{1}(L)\geq 0$. Moreover, there exist a positive eigenfunction $u$ such that $L(u)=\lambda_1u$. Now, we prove the following result which is analogous to Fischer-Colbrie and Schoen \cite[Theorem 3]{FS80} with Cohn-Vossen type inequality of Wang \cite[Theorem 5.10]{W19}. 
\begin{theorem}\label{thm4.2}
Let $(M,g,k)$ be an oriented initial data set satisfying $\mu-|J|> 0$. Let $\Sigma$ be a complete non-compact stable MOTS with unit normal $N$ embedded in $(M,g,k)$. Then $\Sigma$ is diffeomorphic to $\mathbb{R}^2$, properly embedded, and satisfies the following inequality
\begin{equation}\label{43}
\int_{\Sigma} \left(\mu+J(N)\right)d\mu_{\Sigma} \leq 2\pi.
\end{equation}
\end{theorem}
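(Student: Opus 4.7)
My plan follows the strategy of Fischer--Colbrie--Schoen \cite{FS80}, adapted to the MOTS setting by the Galloway--Schoen symmetrization of the non-self-adjoint stability operator $L$, and concludes with the Cohn--Vossen type inequality of \cite{W19}.

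\textbf{Step 1 (Symmetrization).} By stability and \cite{AnderssonMarsSimon0}, there is a principal eigenfunction $u>0$ with $Lu=\lambda_1 u$ and $\lambda_1\geq 0$. Writing $u=e^{w}$ and using $\Delta u/u=\Delta w+|\nabla w|^{2}$, division by $u$ converts $Lu\geq 0$ into the pointwise inequality
\[
-\Delta w-|\nabla w-W|^{2}+Q+\operatorname{div}W\geq 0.
\]
Test against $\phi^{2}$ for $\phi\in C_{c}^{\infty}(\Sigma)$, integrate by parts (using $\int \phi^{2}\operatorname{div}W=-\int 2\phi\langle\nabla\phi,W\rangle$), and bound the cross term $2\phi\langle\nabla\phi,\nabla w-W\rangle$ by Cauchy--Schwarz. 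The $|\nabla w-W|^{2}$ terms cancel and I obtain the symmetric stability inequality
\[
\int_{\Sigma}|\nabla\phi|^{2}\,d\mu_{\Sigma}+\int_{\Sigma}Q\,\phi^{2}\,d\mu_{\Sigma}\geq 0.
\]
Substituting $Q=K_{\Sigma}-\mu-J(N)-\tfrac12|\chi_{+}|^{2}$, this is equivalent to
\[
\int_{\Sigma}|\nabla\phi|^{2}+K_{\Sigma}\phi^{2}\,d\mu_{\Sigma}\geq \int_{\Sigma}\bigl(\mu+J(N)\bigr)\phi^{2}\,d\mu_{\Sigma}+\tfrac12\int_{\Sigma}|\chi_{+}|^{2}\phi^{2}\,d\mu_{\Sigma},
\]
with $\mu+J(N)\geq\mu-|J|>0$ strictly positive on all of $\Sigma$.

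\textbf{Step 2 (Topology and proper embeddedness).} The symmetric inequality gives $\lambda_{1}(-\Delta+Q)\geq 0$, so by the standard Schr\"odinger-operator argument there is $v>0$ solving $(-\Delta+Q)v=0$. Set $\tilde g=v^{2}g_{\Sigma}$; the metric is complete (argued as in \cite{FS80}) and has Gauss curvature
\[
\tilde K=v^{-2}\bigl(K_{\Sigma}-\Delta\log v\bigr)=v^{-2}\bigl(\mu+J(N)+\tfrac12|\chi_{+}|^{2}+|\nabla\log v|^{2}\bigr)>0.
\]
Thus $(\Sigma,\tilde g)$ is a complete non-compact surface with strictly positive Gauss curvature, so by Cohn--Vossen/Huber it has finite topology and $\chi(\Sigma)>0$; non-compactness forces $\chi(\Sigma)=1$, hence $\Sigma$ is diffeomorphic to $\mathbb{R}^{2}$. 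Proper embeddedness then follows by a standard argument from the completeness of the ambient initial data set and of the induced metric on $\Sigma$, ruling out limit points.

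\textbf{Step 3 (Cohn--Vossen inequality).} Since $\Sigma\cong\mathbb{R}^{2}$, it is conformally parabolic. Parabolicity furnishes a sequence of Lipschitz cutoffs $\phi_{R}$ with $0\leq\phi_{R}\leq 1$, $\phi_{R}\to 1$ pointwise, and $\int_{\Sigma}|\nabla\phi_{R}|^{2}\,d\mu_{\Sigma}\to 0$. Passing to the limit in the symmetric inequality with $\phi=\phi_{R}$ gives
\[
\int_{\Sigma}K_{\Sigma}\,d\mu_{\Sigma}\geq\int_{\Sigma}\bigl(\mu+J(N)\bigr)d\mu_{\Sigma},
\]
and combining with the Cohn--Vossen type bound of \cite[Theorem 5.10]{W19}, $\int_{\Sigma}K_{\Sigma}\,d\mu_{\Sigma}\leq 2\pi\chi(\Sigma)=2\pi$, yields \eqref{43}.

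\textbf{Main obstacle.} The delicate point is Step 3: justifying that integrals such as $\int_{\Sigma}K_{\Sigma}$ and $\int_{\Sigma}(\mu+J(N))$ are well defined (possibly as improper integrals with appropriate one-sided integrability from the positivity of $\mu-|J|$ and of $\tilde K$), and that the parabolicity of $\Sigma$ indeed allows the cutoff limit to be interchanged with both sides of the symmetric inequality. A secondary difficulty is ensuring completeness of the conformally changed metric $\tilde g$ in Step 2, since one must rule out that $v$ decays too fast at infinity; this is precisely where the strict positivity of $\mu-|J|$ is used in the Fischer--Colbrie--Schoen argument.
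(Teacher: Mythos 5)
Your proof follows the same broad plan as the paper's (symmetrize the stability operator, pass to the conformal metric $\tilde g = u^2 g_\Sigma$ with $\tilde K\geq 0$, conclude via Cohn--Vossen), but the two steps where you deviate from the paper's argument each contain a genuine gap.

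In Step~2 you deduce $\Sigma\cong\mathbb{R}^2$ by treating $(\Sigma,\tilde g)$ as a \emph{complete} surface of positive curvature and invoking Cohn--Vossen/Huber; but the completeness of the conformal metric $\tilde g = v^2 g_\Sigma$ is neither established by you nor proved in \cite{FS80} (your attribution there is not accurate). The paper's route avoids the question entirely: it lifts the positive solution $u$ of $L_s(u)=0$ to the universal cover and applies the Liouville theorem of \cite[Corollary~3]{FS80} for the operator $-\Delta + aK_\Sigma - P$, $a\geq 1$, $P\geq 0$, to rule out the disk as universal cover --- an argument about conformal type, not about completeness of $\tilde g$ --- and then excludes the cylinder using $\chi=0$. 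This is the correct replacement for the step you leave unsubstantiated.

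In Step~3 the obstacle you flag is real and unresolved: as $\phi_R\uparrow 1$, the term $\int_\Sigma K_\Sigma \phi_R^2\,d\mu_\Sigma$ has a signed integrand, so you would need integrability of $K_\Sigma$ (or one-sided control) to pass to the limit, and you do not establish this. The paper replaces the cutoff argument by integrating the pointwise \emph{identity} $u^{-1}L_s(u)=0$ over geodesic balls $B_R\subset(\Sigma,g_\Sigma)$, which gives
\begin{equation*}
\int_{B_R}\left(\mu+J(N)+\tfrac12|\chi_+|^2\right)d\mu_{\Sigma}
\;=\;\int_{B_R}\left(K_\Sigma-\tfrac{\Delta u}{u}\right)d\mu_{\Sigma}
\;\leq\;\int_{B_R}\tilde K\,d\mu_{\tilde\Sigma},
\end{equation*}
and both extreme integrands are non-negative, so each side is monotone in $R$ and the limit $R\to\infty$ is unambiguous; Cohn--Vossen for $\tilde g$ then caps the right-hand side by $2\pi\chi(\Sigma)=2\pi$. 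This monotone-integration identity is precisely the device that removes the integrability issue, and without it (or an equivalent) your Step~3 does not yield a complete proof of \eqref{43}.
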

\begin{proof}First, an argument similar to Theorem 8.8 in \cite{GL83} shows that $\Sigma$ cannot be a non-compact surface with finite area. In particular, we have 
\begin{proposition}
Let $\Sigma$ be a stable MOTS of finite area embedded in an initial data set $(M,g,k)$ satisfying $\mu-|J|>0$. Then $\Sigma$ is homeomorphic to $\mathbb{S}^2$. 
\end{proposition}
\begin{proof}For any smooth function $\varphi\in C_c^\infty(\Sigma)$, multiply the stability condition $\delta_{\varphi N}\theta_+\geq 0$ in Lemma \ref{variationlemma} by $\varphi$ and integrate by parts to get
\begin{equation}
\int_{\Sigma}\left(|\nabla \varphi|^2+(K_{\Sigma}-\mu-J(N)-\frac{1}{2}|\chi_+|^2)\varphi^2\right) d\mu_{\Sigma}\geq 0,
\end{equation}
where $K_{\Sigma}$ is the Gauss curvature of $\Sigma$. Since $\mu-|J|>0$ and $J(N)\geq -|J|$, we have $\mu+J(N)> 0$ that implies 
\begin{equation}
\int_{\Sigma}\left( |\nabla \varphi|^2+K_{\Sigma}\varphi^2\right) d\mu_{\Sigma}>0.
\end{equation}
If $\Sigma$ is compact just choose $\varphi=1$. Then by Gauss-Bonnet theorem we have Euler number of $\Sigma$ is positive or equivalently $\Sigma$ is diffeomorphic to $\mathbb{S}^2$. If $\Sigma$ is not compact, it follows from Theorem 8.11 of \cite{GL83} that $|\Sigma|=\infty$. In particular, Theorem 8.11 of \cite{GL83} states that given a constant $\alpha>\frac{1}{2}$ and a complete surface $\Sigma$, if $\int_{\Sigma}\left(|\nabla \varphi|^2+\alpha K_{\Sigma}\varphi^2\right) d\mu_{\Sigma}\geq 0$ for all $\varphi\in C_c^\infty(\Sigma)$, then $|\Sigma|=\infty$. This complete the proof.
\end{proof}
Therefore, $\Sigma$ has infinite area. Since $\Sigma$ is stable MOTS, its first eigenvalue is non-negative, i.e., $\lambda_1(L)\geq 0$, that implies the first eigenvalue of symmetrized stability operator $L_s=-\Delta+Q$ is non-negative, see computation in the proof of Theorem 2.1 of \cite{GallowaySchoen} which is similar to our Lemma \ref{Stablelemma} without boundary term. We claim $(\Sigma,g_{\Sigma})$ is conformal to either $\mathbb{C}$ or cylinder $\mathbb{R}\times S^1$. If this is not true, the universal cover of $\Sigma$ is a disk. Then by Theorem 1 of \cite{FS80} and stability of MOTS, there exists a positive function $u$ such that $L_s(u)=0$. By lifting the metric $g_{\Sigma}$ to universal cover, we have a positive solution $v$, $L_s(v)=0$ on universal cover. But by Corollary 3 of \cite{FS80}, since $P:=\mu+J(N)+\frac{1}{2}|\chi_+|^2\geq 0$, the operator $0=-\Delta v+aK_{\Sigma}v-Pv$, for $a\geq 1$, does not have positive solution. This implies $L_s(v)=0$ on universal cover cannot have a positive solution which is a contradiction with the fact that the universal cover of $\Sigma$ is disk. This complete the proof of the claim.
   
Define the $\tilde{g}_{\Sigma}=u^2g_{\Sigma}$ on $\Sigma$, where $u$ satisfies $L_s(u)=0$. Let $\tilde{K}_{\Sigma}$ be Gauss curvature of $\tilde{g}_{\Sigma}$. It follows from $L_s(u)=0$ that 
\begin{equation}\label{tildeK}
\begin{split}
u^2\tilde{K}_{\Sigma}&=K_{\Sigma}-u^{-1}\Delta_{g}u+|\nabla \log u|^2\\
&=\mu+J(N)+\frac{1}{2}|\chi_+|^2+ |\nabla \log u|^2\geq 0
\end{split}
\end{equation}By integration and a classic result of Cohn-Vossen \cite{CV35}, we have
\begin{equation}\label{44}
0\leq \int_{\Sigma} \tilde{K} d\mu_{\tilde{\Sigma}}\leq 2\pi \chi(\Sigma)
\end{equation}where $d\mu_{\tilde{\Sigma}}$ is volume form with respect to metric $\tilde{g}_{\Sigma}$. Now if $\Sigma$ is conformal to cylinder $\mathbb{R}\times S^1$, then since $\chi(\Sigma)=0$ we have $\tilde{K}_{\Sigma}=0$. Therefore, it follows from \eqref{tildeK} that $\mu+J(N)+\frac{1}{2}|\chi_+|^2=0$. This is contradicting the energy condition $\mu-|J|>0$. Let $\Sigma$ to be diffeomorphic to $\mathbb{R}^2$. Since $L_s(u)=0$, we have $\int_{B_{\Sigma}(p,R)}\frac{1}{u}L_s(u) d\mu_{{\Sigma}}=0$ where $B_{\Sigma}(0,R)$ is a geodesic ball in $(\Sigma,g_{\Sigma})$ centered at $p\in \Sigma$ of radius $R$. This leads to 
\begin{equation}\label{45}
\begin{split}
\int_{B_{\Sigma}(p,R)} \left(\mu +J(N)+\frac{1}{2}|\chi_+|^2\right)d\mu_{\Sigma} =\int_{B_{\Sigma}(p,R)} \left(K_{\Sigma}-\frac{1}{u}\Delta_{\Sigma}u\right)d\mu_{\Sigma} \\
\leq \int_{B_{\Sigma}(p,R)} \tilde{K}_{\Sigma}d\mu_{\tilde{\Sigma}} \leq \int_{\Sigma} \tilde{K}_{\Sigma}d\mu_{\tilde{\Sigma}}
\end{split}
\end{equation} 
Taking $R\to \infty$, \eqref{44} and \eqref{45} yield \eqref{43}. To show that $\Sigma$ is proper, one proceeds as in \cite{W19}, but with \eqref{43} instead of that obtained in Theorem 5.10 of \cite{W19}. 
\end{proof}
Next, we recall a distance and area growth estimates from Meeks-Perez-Ros \cite{MPR08} based on Theorem 1 of \cite{FS80}. First, we have the following definition.
\begin{definition}
Given a complete metric on a surface $\Sigma$ and a point $p\in \Sigma$, define the distance of $p$ to the boundary of $\Sigma$, $\text{dist}(p,\partial \Sigma)$, as the infimum of the lengths of all divergent curves in $\Sigma$ starting at $p$. A ray is a divergent minimizing geodesic in $\Sigma$. 
\end{definition}
It can be shown that if $\Sigma$ is not compact but $\partial \Sigma$ is compact, then there exists a ray starting at some point $p\in \partial \Sigma$.
\begin{theorem}\label{thm4.7}\cite[Theorem 2.8]{MPR08}
Let $\Sigma$ be a surface and suppose that there exist constants $a>\frac{1}{4}$ and $c>0$ such that the operator $-\Delta+aK_{\Sigma}-c$ is non-negative, i.e., it has principal eigenvalue $\lambda_1(-\Delta+aK_{\Sigma}-c)\geq0$. Then the distance from every point $p\in \Sigma$ to the boundary of $\partial\Sigma$ satisfies 
\begin{equation}
\text{dist}(p,\partial\Sigma)\leq \pi \sqrt{\left(1+\frac{1}{4a-1}\right)\frac{a}{c}}
\end{equation}
If $\Sigma$ is complete then it must be compact and $\chi(\Sigma)>0$.
\end{theorem}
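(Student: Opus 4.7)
The plan is to reduce the statement to a one-dimensional Sturm--Liouville comparison along a length-realizing geodesic, combining the stability hypothesis with the index form. Argue by contradiction: fix $p\in \Sigma$ and assume $\text{dist}(p,\partial\Sigma)>D$, where $D=\pi\sqrt{\bigl(1+\frac{1}{4a-1}\bigr)\frac{a}{c}}$. Choose a unit-speed minimizing geodesic $\gamma:[0,L]\to \Sigma$ starting at $p$ with $L>D$, and introduce Fermi coordinates $(t,s)$ in a tubular neighborhood of $\gamma$, in which the metric takes the form $g=J(t,s)^{2}dt^{2}+ds^{2}$ with $J(t,0)=1$, $J_{s}(t,0)=0$, and $J_{ss}(t,0)=-K_{\Sigma}(\gamma(t))$.

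The hypothesis $\lambda_{1}(-\Delta+aK_{\Sigma}-c)\geq 0$ is equivalent to
\begin{equation*}
\int_{\Sigma}\bigl(|\nabla\varphi|^{2}+aK_{\Sigma}\varphi^{2}\bigr)dA\geq c\int_{\Sigma}\varphi^{2}dA\qquad \forall\,\varphi\in C_c^{\infty}(\Sigma).
\end{equation*}
I would test this inequality against separated functions $\varphi(t,s)=\eta(t)\chi_{\delta}(s)$ with $\eta\in C_c^\infty(0,L)$ and $\chi_{\delta}$ a carefully chosen transverse profile supported near $s=0$. Expanding $|\nabla\varphi|^{2}={\eta'}^{2}\chi_{\delta}^{2}/J^{2}+\eta^{2}{\chi_{\delta}'}^{2}$ and $dA=J\,dt\,ds$, then integrating out the transverse variable to leading order in the Taylor expansion of $J$ about $s=0$, one converts the two-dimensional stability inequality into the one-dimensional estimate
\begin{equation*}
\int_{0}^{L}\bigl({\eta'(t)}^{2}+aK_{\Sigma}(\gamma(t))\eta(t)^{2}\bigr)dt\geq c\int_{0}^{L}\eta(t)^{2}dt
\end{equation*}
valid for every $\eta\in C_c^{\infty}(0,L)$.

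The minimality of $\gamma$ furnishes the classical index-form bound
\begin{equation*}
\int_{0}^{L}{\eta'(t)}^{2}dt\geq \int_{0}^{L}K_{\Sigma}(\gamma(t))\eta(t)^{2}dt.
\end{equation*}
These two inequalities are then combined through a weighted interpolation in which the hypothesis $a>\tfrac14$ is precisely what is needed to produce a strictly positive coefficient of $\int {\eta'}^{2}$; carrying out the optimization yields the sharp Poincar\'e-type estimate
\begin{equation*}
\frac{4a^{2}}{4a-1}\int_{0}^{L}{\eta'(t)}^{2}dt\geq c\int_{0}^{L}\eta(t)^{2}dt.
\end{equation*}
Since the first Dirichlet eigenvalue of $-d^{2}/dt^{2}$ on $(0,L)$ equals $(\pi/L)^{2}$, this forces $L\leq \pi\sqrt{4a^{2}/((4a-1)c)}=D$, contradicting the choice of $\gamma$.

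For the completeness statement, if $\Sigma$ is complete then the distance bound applied to any minimizing geodesic issuing from a fixed point shows $\Sigma$ has finite diameter, and Hopf--Rinow gives compactness. Inserting $\varphi\equiv 1$ into the stability inequality on the now-closed $\Sigma$ yields $a\int_{\Sigma}K_{\Sigma}\,dA\geq c|\Sigma|>0$, and Gauss--Bonnet gives $\chi(\Sigma)>0$. The main technical obstacle is the transverse reduction in Step~1: producing the one-dimensional inequality with coefficients exactly $1$ on $\int {\eta'}^{2}$ and $a$ on $\int K\eta^{2}$ in the limiting profile requires carefully tracking the Jacobi field $J$ and its derivatives while choosing $\chi_{\delta}$, and the threshold $a=\tfrac14$ is exactly the place below which the subsequent interpolation ceases to produce a positive Rayleigh-type lower bound, which is why the constant $D$ diverges there.
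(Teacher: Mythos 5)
The paper quotes this as \cite[Theorem 2.8]{MPR08} without supplying a proof, so I evaluate your argument on its own terms. The completeness/compactness clause is fine (Hopf--Rinow plus the distance bound, then $\varphi\equiv 1$ and Gauss--Bonnet), but the core distance estimate has two genuine gaps.

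The transverse reduction in Step~1 does not produce the one-dimensional inequality $\int({\eta'}^2 + aK\eta^2)\,dt \geq c\int\eta^2\,dt$. Testing the two-dimensional stability form with $\varphi(t,s)=\eta(t)\chi_{\delta}(s)$ in Fermi coordinates gives a cross term $\iint \eta^2\,\chi_{\delta}'(s)^2\,J\,dt\,ds$ whose ratio to the remaining terms scales like $\delta^{-2}$ as the transverse support shrinks, so the inequality degenerates to $0\geq 0$ in the limit; keeping $\delta$ fixed merely replaces $c$ by $c-O(\delta^{-2})$, which may be nonpositive and in any case is not controlled by the geometry. Your remark that this is resolvable by "carefully tracking the Jacobi field $J$" does not supply the missing step: integrating $-aJ_{ss}\varphi^2$ by parts in $s$ recovers the $aK$ term but leaves the $\chi_{\delta}'$ term untouched. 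The actual mechanism behind the theorem is a different decomposition altogether --- geodesic polar coordinates $(r,\theta)$ about $p$, with metric $dr^2 + f^2 d\theta^2$, where $f_{rr}=-Kf$ and two integrations by parts in $r$ convert $\int_{\Sigma}K\eta(r)^2\,dA$ into $2\pi\,\eta(0)^2 - \int_{\Sigma}(\eta^2)''(r)\,dA$, so that no transverse cross term ever appears. That is the structure of the Meeks--P\'erez--Ros/Castillon argument, and it is not a Fermi-coordinate reduction along a single geodesic.

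Even granting your (A) and the index-form bound (B) $\int{\eta'}^2\geq\int K\eta^2$, the "weighted interpolation" leading to $\tfrac{4a^2}{4a-1}\int{\eta'}^2\geq c\int\eta^2$ is not justified and in fact cannot follow from (A) and (B) alone. Eliminating $\int K\eta^2$ from those two inequalities yields only $(1+a)\int{\eta'}^2\geq c\int\eta^2$, and since $\tfrac{4a^2}{4a-1}<1+a$ exactly when $a>\tfrac13$, your claimed estimate is strictly stronger than what your ingredients imply throughout the regime $a>\tfrac13$; the theorem would remain unproved there. The constant $\tfrac{4a^2}{4a-1}$ arises from optimizing $\eta$ against the polar-coordinate identity above, not from a linear combination of (A) and (B).
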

\begin{theorem}\label{thm4.8}\cite[Theorem 2.9]{MPR08}
Let $\Sigma$ be a Riemannian surface, $x_0\in \Sigma$ and constants $0<R'<R<\text{dist}(x_0,\partial\Sigma)$. Suppose that for some $a\in (\frac{1}{4},\infty)$ and $q\in C^\infty(\Sigma)$, $q\geq 0$, the operator $-\Delta+aK_{\Sigma}-q$ is non-negative on $\Sigma$. Then 
\begin{equation}
\frac{8a^2}{4a-1}\frac{|B(x_0,R')|}{R^2}+\left( 1-\frac{R'}{R}\right)^2 \int_{B(x_0,R')} q\leq 2\pi a \left(1-\frac{R'}{R}\right)^{\frac{2}{1-4a}}
\end{equation}
and if $\Sigma$ is complete then $R\to |B(x_0,R)|$ grows at most quadratically, $q\in L^1(\Sigma)$ and the universal cover of $\Sigma$ is conformally $\mathbb{C}$. 
\end{theorem}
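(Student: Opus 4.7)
My plan is to test the non-negativity of the operator $-\Delta + a K_\Sigma - q$ against a radial cut-off based at $x_0$, reduce the resulting inequality to a one-variable estimate via the coarea formula and Gauss--Bonnet, and then choose a power-type profile to extract the sharp constants.

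First, the hypothesis $\lambda_1(-\Delta + a K_\Sigma - q) \geq 0$ is equivalent to
\begin{equation*}
\int_\Sigma \bigl(|\nabla \varphi|^2 + a K_\Sigma \varphi^2 - q\varphi^2\bigr)\, dA \geq 0 \qquad \text{for every } \varphi \in C_c^\infty(\Sigma).
\end{equation*}
I would set $\varphi = \eta(r)$, where $r = d(x_0, \cdot)$ and $\eta\colon [0,R] \to [0,1]$ is non-increasing, Lipschitz, equal to $1$ on $[0,R']$, and $\eta(R)=0$. Writing $A(s) := |B(x_0,s)|$ and $L(s) := A'(s)$ (which exists for a.e.\ $s$), the coarea formula gives
\begin{equation*}
\int_\Sigma |\nabla \varphi|^2\, dA = \int_{R'}^{R} \eta'(s)^2\, L(s)\, ds \qquad \text{and} \qquad \int_\Sigma q\, \varphi^2\, dA \geq \int_{B(x_0,R')} q\, dA.
\end{equation*}
For $s$ below the injectivity radius at $x_0$, Gauss--Bonnet on the geodesic disk yields $\int_{B(x_0,s)} K_\Sigma\, dA = 2\pi - L'(s)$; the cut locus is handled by a standard smoothing argument or by invoking the BV regularity of $L$. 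Integrating by parts twice then produces
\begin{equation*}
\int_\Sigma K_\Sigma\, \eta(r)^2\, dA = 2\pi\, \eta(0)^2 - 2\int_0^R L(s)\bigl(\eta(s)\eta'(s)\bigr)'\, ds.
\end{equation*}

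Next, I would choose the power profile $\eta \equiv 1$ on $[0, R']$ and $\eta(s) = \bigl((R-s)/(R-R')\bigr)^\beta$ on $[R',R]$, with $\beta > 0$ to be optimized. Substituting into the reduced inequality, using monotonicity of $A$ to bound the weighted $L$-integrals in terms of $A(R')$, and collecting terms yields an inequality of the schematic form
\begin{equation*}
\frac{c_1(\beta,a)}{R^2}\, A(R') + c_2(\beta,a)\Bigl(1 - \tfrac{R'}{R}\Bigr)^2 \int_{B(x_0,R')} q\, dA \leq 2\pi a\, \Bigl(1 - \tfrac{R'}{R}\Bigr)^{-g(\beta,a)}.
\end{equation*}
Optimizing $\beta$ in terms of $a$ (balancing the two left-hand contributions via an explicit algebraic extremal condition) produces the coefficient $8a^2/(4a-1)$ and the exponent $2/(1-4a)$ stated in the theorem. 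The subsequent assertions follow at once: setting $R = 2R'$ yields the quadratic growth $A(R') \leq C(a) R'^2$; letting $R \to \infty$ with $R' = R/2$ bounds $\int_{B(x_0,R/2)} q$ independently of $R$, hence $q \in L^1(\Sigma)$; and the conformal classification of the universal cover follows by lifting the non-negative operator $-\Delta + a K_\Sigma$ to $\tilde\Sigma$ and invoking the conformal obstruction argument of Fischer--Colbrie and Schoen (Corollary 3 of \cite{FS80}), which for $a > 1/4$ rules out the conformally hyperbolic case.

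The principal obstacle is the final optimization: producing the explicit constants $8a^2/(4a-1)$ and $2/(1-4a)$ requires the precise power profile together with a Hardy-type manipulation, which is where the sharp content of the estimate resides. A secondary technicality is the rigorous treatment of the cut locus of $x_0$ so that Gauss--Bonnet and the integration by parts above are valid globally in integral form; this is a routine smoothing argument and does not affect the final constants.
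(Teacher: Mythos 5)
This theorem is stated in the paper as a citation to Theorem~2.9 of Meeks--P\'erez--Ros \cite{MPR08}; the paper itself provides no proof of it, so there is no in-paper argument to compare against. Your sketch does, however, track the strategy used in the cited reference: test the non-negativity of $-\Delta + aK_\Sigma - q$ against a radial cutoff $\eta(r)$ supported in $B(x_0,R)$, convert everything to one-dimensional integrals in $s=r(\cdot)$ via the coarea formula, express $\int_{B(s)}K$ through Gauss--Bonnet and integrate by parts twice to eliminate $K$ in favor of $L(s)=|\partial B(s)|$, and then choose a power-law profile $\eta(s)=((R-s)/(R-R'))^\beta$ whose exponent is optimized against $a$. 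That is indeed where the constants $8a^2/(4a-1)$ and $2/(1-4a)$ come from, and your consequences (quadratic area growth and $q\in L^1$ from the choice $R=2R'$) are correct.

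Two comments on the weaker points of the sketch. First, you leave the optimization in $\beta$ and the resulting constants unproved; since the entire content of the inequality is the sharp constant, this is the nontrivial core of the argument, and merely naming the mechanism (``a Hardy-type manipulation'') does not establish the stated estimate. Second, for the conformal classification of the universal cover you invoke Corollary~3 of \cite{FS80}; that corollary (and the related FS machinery the present paper uses in Theorem~\ref{thm4.2}) is set up for $a\geq 1$, whereas here $a$ may lie anywhere in $(1/4,\infty)$. The route actually needed, and used in \cite{MPR08}, is more elementary: once you know $|B(x_0,R)|\lesssim R^2$, the classical parabolicity test $\int^\infty s/|B(s)|\,ds=\infty$ holds, so the lifted metric on the simply connected cover is parabolic, and a parabolic simply connected surface is conformally $\mathbb{C}$. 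Replacing your FS80 step by this parabolicity argument closes the gap for all $a>1/4$.
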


We now apply these theorems to obtain various global results for general surfaces with $\theta_+$ and $\theta_-$. First, for any two sided surface with a well defined pair of null normals $l_+$ and $l_-$ embedded in a spacetime with Lorentzian $h$,  we define a scalar quantity
\begin{equation}
\mathcal{G}(\Sigma):=-\frac{3}{4}\theta_+\theta_-+\frac{1}{2}G(l_+,l_-)-\frac{\theta_+}{2\theta_-}G(l_-,l_-)
\end{equation}
\begin{definition}
Let $(M,g,k)$ be an initial data set. A spacelike $2$-surface $\Sigma \subset M$ is said to be $2$-immersed if there exists a $2:1$ cover $p:\tilde{M}\to M$ of $M$ such that $\tilde{\Sigma}=p^{-1}(\Sigma)$ is embedded and two-sided in $\tilde{M}$. 
\end{definition}
In this definition, $(\tilde{M},p^*g,p^*k)$ is itself embedded in a $4$-dimensional spacetime, which we denote with a slight abuse of notation $(\tilde{L},p^*h)$, and we write $\tilde{G}\equiv \text{Ric}_{p^*h}-\frac{1}{2} R_{p^*h}{p^*h}$ for the Einstein tensor of $(\tilde{L},p^*h)$. We now observe the following.  
\begin{theorem}\label{thm4.81}
Let $(\Sigma,g_{\Sigma})$ be a complete spacelike $2$-surface $2$-immersed in a $3$-dimensional initial data set $(M,g,k)$. In the notation above, write $\tilde{\Sigma}= p^{-1}(\Sigma)$ for the two-sided embedded surface covering $\Sigma$. Suppose that $\tilde{\Sigma}$ is $\mathbf{H}$-stable with respect to direction $-l_-$ and has spacelike mean curvature vector. Let $\bar{\Sigma}$ be the universal cover of $\Sigma$. Then 
\begin{enumerate}
\item{If ${\mathcal{G}}(\tilde{\Sigma})\geq c>0$ on $\tilde{\Sigma}$, then $\Sigma$ is topologically $S^2$ or $\mathbb{R}\mathbb{P}^2$.}
\item{If ${\mathcal{G}}(\tilde{\Sigma})\geq 0$ on $\tilde{\Sigma}$, then:}
\begin{itemize}
\item[(i)] $\bar{\Sigma}$ has at most quadratic growth.
\item[(ii)] $\int_{\bar{\Sigma}}{\mathcal{G}}(\bar{\Sigma})$ and $\int_{\bar{\Sigma}}\frac{\theta^+}{\theta_-}|\hat{\chi}_-|^2$ are both finite\footnote{It is implicit here that the integrands be defined on $\bar{\Sigma}$}.
\item[(iii)] If $\Sigma$ has infinite fundamental group, then $\hat{\chi}_-=0$ and ${\mathcal{G}}(\tilde{\Sigma})=0$, so in particular this case does not occur if the dominant energy condition holds. In this case, $\Sigma$ has at most linear area growth and is diffeomorphic to a cylinder, a Mobius strip, a torus, or a Klein bottle. 
\end{itemize}
\end{enumerate}
\end{theorem}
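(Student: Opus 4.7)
The plan is to convert the $\mathbf{H}$-stability of $\tilde\Sigma$ in the direction $-l_-$ into a Schr\"odinger-type inequality on $\tilde\Sigma$ of the exact form required by Theorems \ref{thm4.7} and \ref{thm4.8}, and then to read off each topological conclusion from one of those two theorems. By Lemma \ref{variationlemma}(b), $\delta_{-\varphi l_-}|\mathbf H|^2=-2\theta_-\bar L\varphi$, where
\begin{equation*}
\bar L\varphi=-\Delta\varphi+2\langle W,\nabla\varphi\rangle+\bigl(\operatorname{div}_\Sigma W-|W|^2+\bar Q\bigr)\varphi.
\end{equation*}
The spacelike hypothesis on $\mathbf H$ forces $\theta_+\theta_-<0$, so $\theta_-$ has a fixed sign and the $\mathbf H$-stability assumption yields $\bar L\psi\geq 0$ for some positive $\psi\not\equiv 0$, whence $\lambda_1(\bar L)\geq 0$ by the Krein--Rutman type argument of \cite{AnderssonMarsSimon}. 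The Galloway--Schoen symmetrization trick (set $\omega=\log\psi$, complete the square in $\nabla\omega-W$) upgrades this to $\lambda_1(-\Delta+\bar Q)\geq 0$. Using $R_\Sigma=2K_{\tilde\Sigma}$ together with the definition of $\mathcal G(\tilde\Sigma)$ rewrites the resulting inequality as
\begin{equation*}
\int_{\tilde\Sigma}\bigl(|\nabla\varphi|^2+K_{\tilde\Sigma}\varphi^2\bigr)\,d\mu\ \geq\ \int_{\tilde\Sigma} q\,\varphi^2\,d\mu,\qquad q:=\mathcal G(\tilde\Sigma)-\tfrac{\theta_+}{2\theta_-}|\hat\chi_-|^2,
\end{equation*}
for every $\varphi\in C_c^\infty(\tilde\Sigma)$, where both summands of $q$ are non-negative (the second because $\theta_+/\theta_-<0$).

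For part (1), $\mathcal G\geq c>0$ gives $q\geq c$, so the operator $-\Delta+K_{\tilde\Sigma}-c$ has non-negative principal eigenvalue on $\tilde\Sigma$; Theorem \ref{thm4.7} with $a=1>\tfrac14$ forces $\tilde\Sigma$ to be compact with $\chi(\tilde\Sigma)>0$, hence $\tilde\Sigma\cong S^2$. Since $\Sigma=\tilde\Sigma/(\mathbb Z/2)$ under a free involution, $\Sigma$ is either $S^2$ or $\mathbb R\mathbb P^2$. For (2)(i)--(ii), lift the inequality to the universal cover $\bar\Sigma$ (with $q\geq 0$) and apply Theorem \ref{thm4.8} with $a=1$: this yields at most quadratic area growth for $\bar\Sigma$, together with $q\in L^1(\bar\Sigma)$, which gives finiteness of $\int_{\bar\Sigma}\mathcal G(\bar\Sigma)$ and $\int_{\bar\Sigma}\tfrac{\theta_+}{-\theta_-}|\hat\chi_-|^2$ simultaneously.

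For (2)(iii), when $\pi_1(\Sigma)$ is infinite the deck group of $\bar\Sigma\to\tilde\Sigma$ is infinite and acts by isometries, and the lift of $q$ is invariant under this action; combined with $q\in L^1(\bar\Sigma)$ this forces $q\equiv 0$, i.e.\ $\hat\chi_-=0$ and $\mathcal G(\tilde\Sigma)\equiv 0$. Under DEC one has $G(l_+,l_-)\geq 0$ and $G(l_-,l_-)\geq 0$, and spacelike $\mathbf H$ gives $-\tfrac34\theta_+\theta_->0$, so $\mathcal G$ is a sum of three non-negative terms with the first strictly positive; this rules out $\mathcal G\equiv 0$ and therefore rules out this case under DEC. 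The remaining topological conclusions (linear area growth and classification as cylinder, M\"obius strip, torus, or Klein bottle) will follow from $\bar\Sigma$ being conformally $\mathbb C$ (also given by Theorem \ref{thm4.8}) together with the classical description of complete surfaces admitting a free isometric action of an infinite group of conformal transformations with at most quadratic area growth, exactly as in \cite{MPR08}. The main technical obstacles are to justify the symmetrization step on the possibly non-compact $\tilde\Sigma$ through an exhaustion by compact subdomains (applying the principal-eigenvalue result of \cite{AnderssonMarsSimon} on each and passing to a limit), and to verify that the $2{:}1$ construction gives a consistent globally defined pair $l_\pm$ together with $\chi_\pm,W,\mathcal G$ on $\tilde\Sigma$ even when $\Sigma$ itself is non-orientable, which is the precise reason for phrasing the hypothesis in terms of a $2$-immersion.
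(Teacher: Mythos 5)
Your proposal follows the same route as the paper: convert $\mathbf{H}$-stability in the $-l_-$ direction into non-negativity of the symmetrized Schr\"odinger operator $-\Delta+K_{\tilde\Sigma}-q$ with $q=\mathcal G(\tilde\Sigma)-\tfrac{\theta_+}{2\theta_-}|\hat\chi_-|^2\geq 0$, and then read off each clause from Theorems~\ref{thm4.7} and~\ref{thm4.8} with $a=1$. One genuine improvement in your write-up is the explicit symmetrization: the paper's phrase ``for any smooth function $\varphi$, multiply the stability condition $\delta_{-\varphi l_-}|\mathbf H|^2\ge 0$ by $\varphi$ and integrate by parts'' is a shorthand that, read literally, would require the stability inequality to hold for every test function, whereas Definition~\ref{DefstableH} only supplies a single $\psi\geq 0$; what is actually needed (and what you supply) is the principal-eigenvalue argument plus the Galloway--Schoen completion-of-the-square step (as in Lemma~\ref{Stablelemma}) to obtain the quadratic form inequality for all $u\in C_c^\infty$. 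You also fill in the deduction $\tilde\Sigma\cong S^2\Rightarrow\Sigma\cong S^2\text{ or }\mathbb{RP}^2$, and spell out the invariance/$L^1$ argument forcing $q\equiv 0$ in (2)(iii), both of which the paper leaves implicit with ``(iii) then follows from (i) and (ii).'' These are clarifications rather than a different method; the underlying strategy and the two MPR theorems used are identical.
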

\begin{proof}
Since $\Sigma$ is $2$-immersed, we pass to its two-sided cover $\tilde{\Sigma}=p^{-1}(\Sigma)$, which is embedded in $(\tilde{M},p^{*}g,p^{*}k)$ and for which we have a well defined pair of null normals $l_+$ and $l_-$. The surface $\tilde{\Sigma}$ is $\mathbf{H}$-stable with respect to direction $-l_-$. Then for any smooth function $\varphi\in C^\infty(\Sigma)$, multiply the stability condition $\delta_{-\varphi l_-}|\mathbf{H}|^2\geq 0$ in Lemma \ref{variationlemma} by $\varphi$ and integrate by parts, we obtain 
\begin{equation}
\int_{\tilde{\Sigma}} \left( |\nabla \varphi|^2+Q'\varphi^2 \right) d\mu_{\tilde{\Sigma}} \geq 0 
\end{equation}
where $Q'=K_{\tilde{\Sigma}}+\frac{\theta_+}{2\theta_-}|\hat{\chi}_-|^2-{\mathcal{G}}(\tilde{\Sigma})$. This implies that the following operator is non-negative 
\begin{equation}\label{tL}
\tilde{L}:=-\Delta+K_{\tilde{\Sigma}}+\frac{\theta_+}{2\theta_-}|\hat{\chi}_-|^2-\mathcal{G}(\tilde{\Sigma})
\end{equation} 
on $\tilde{\Sigma}$. The theorem is now proved by applying Theorem \ref{thm4.7} and Theorem \ref{thm4.8} to various forms for $\tilde{L}$. \\ \indent

\noindent(1) We note that the operator $\tilde{L}$ is related to operator in Theorem \ref{thm4.7} by setting $a=1$ and $-\frac{\theta_+}{\theta_-}|\hat{\chi}_-|^2+{\mathcal{G}}(\tilde{\Sigma})\geq {\mathcal{G}}(\tilde{\Sigma})\geq c>0$. Then Theorem \ref{thm4.7} implies the distance estimate $\text{dist}(p,\partial \tilde{\Sigma})\leq \frac{2\pi}{\sqrt{3c}}$. Since $\tilde{\Sigma}$ is complete, then Theorem \ref{thm4.7} implies that $\tilde{\Sigma}$ is compact with $\chi(\tilde{\Sigma})>0$.\\ \indent

\noindent(2) Since $\tilde{\Sigma}$ is stable, so is the universal cover $\bar{\Sigma}$. By Theorem \ref{thm4.8} it then follows that $\bar{\Sigma}$ has at most quadratic area growth and that ${\mathcal{G}}(\bar{\Sigma})$ is in $L^1(\bar{\Sigma})$. Since ${\mathcal{G}}(\tilde{\Sigma})$ and $-\frac{\theta_+}{\theta_-}|\hat{\chi}_-|^2$ are both non-negative (and thus on $\bar{\Sigma}$ as well), the conclusion follows. (iii) then follows from (i) and (ii).

\end{proof}

\section{Stable Capillary Marginally outer trapped surfaces}\label{Section5}
Let $(M^n,g,k)$ be an initial data set and $\Sigma^{n-1}\subset M^n$ be an embedded, connected, complete hypersurface in $M$ with boundary $\partial M$, such that $\partial \Sigma$ is an embedded submanifold of $\partial M$. Call any such $\Sigma$ a capillary surface.\\ \indent 
Suppose that $\Sigma$ separates $M^n$ into two components, label one of these $\Omega$, and let the unit normal $N$ of $\Sigma$ point into $\Omega$. Let $\overline{\nu}$ be the unit normal of $\partial \Sigma$, pointing out of $\Omega$, as a subset of $\partial M$, and let $\gamma$ be the contact angle between $\partial M$ and $\Sigma$. Moreover, let $\nu$ be the unit outward normal vector of $\partial\Sigma$ and $\bar{N}$ be the unit outward normal vector of $\partial M$. Clearly, the relation of normal vectors is $\nu=\cos\gamma \bar{\nu}+\sin\gamma\bar{N}$ and $\bar{N}=\cos\gamma N+\sin\gamma\nu$ and if $\gamma=\frac{\pi}{2}$, $\Sigma$ is called a free boundary surface with $\nu=\bar{N}$, see Figure \ref{fig1}. In addition, we denote the second fundamental form and mean curvature of $\Sigma$ in the direction pointing towards $\Omega$ by $A$ and $H$. We let $\Pi$ and $H_{\partial M}$ be the second fundamental form and mean curvature of $\partial M$ in $M$. \\ \indent 
Now assume $l_+=\tau+N$ is the outward future-directed unit normal vector of $\Sigma$ in ambient spacetime with spacelike hypersurface $M^n$. As in Section \ref{Sec2}, denote $\chi_+$  and $\theta_+$ the outer null second fundamental form pointing towards $\Omega$ and the associated outer null expansion.\\

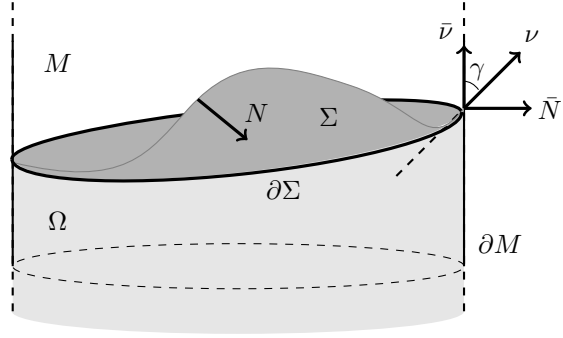
\begin{figure}[h]
	\centering
	\begin{tikzpicture}[scale=.6, every node/.style={scale=1}]
	\fill[black!30!white,rotate=6] (-1,.5) ellipse (5cm and .6cm);
	\fill[gray!20!white](-6,-3.5)--(-6,-.2).. controls (-3.5,-.8)and(-3,0) .. (-2.1,1).. controls (-1,2.2)and(0,2)..(1.5,1.5).. controls (3,1)and(3.2,0)..(4,1)--(4,-3.5) arc(360:180:5.01cm and  .5cm)--(-6,-3.5);
	\draw[dashed,black] (-1,-2.5) ellipse (5cm and .5cm);;
	\draw[thick,black](-6,-2.5)--(-6,2.5);
	\draw[thick,black](4,-2.5)--(4,2.5);
	\draw[very thick,->](4,1)--(5.25,2.25);
	\draw[dashed,thick](2.5,-.5)--(4,1);
	\draw[very thick,->](4,1)--(4,2.4);
	\draw[very thick,->](-1,.92)--(-1.7,-.2);
	\draw[very thick,black,rotate=6] (-6,0.5) arc(180:0:5.01cm and  .6cm);
	\fill[black!30!white](4,1) {[rotate=6.4] arc(360:180:5.01cm and  .86cm)}-- (-6,-.2).. controls (-3.5,-.8)and(-3,0) .. (-2.1,1).. controls (-1,2.2)and(0,2)..(1.5,1.5).. controls (3,1)and(3.2,0)..(4,1);
	\draw[gray](-6,-.2).. controls (-3.5,-.8)and(-3,0) .. (-2.1,1).. controls (-1,2.2)and(0,2)..(1.5,1.5).. controls (3,1)and(3.2,0)..(4,1);
	\draw[very thick,->](4,1)--(5.5,1);
	\node at  (-5,-1.5){$\Omega$};
	\node at (4.8,-2){$\partial M$};
	\node at (5.5,2.6){$\nu$};
	\node at (3.6,2.7){$\bar{\nu}$};
	\node at (5.9,1){$\bar{N}$};
	\node at (1,0.8){$\Sigma$};
	\node at (4.3,1.8){$\gamma$};
	\node at (0,-.8){$\partial\Sigma$};
	\node at (-5,2){$M$};
	\draw[very thick,->](-1.9,1.2)--(-.8,0.3);
	\node at (-.6,.9){$N$};
	\draw[very thick,black,rotate=6] (-6,0.5) arc(180:360:5.01cm and  .86cm);
	\draw[black] (4,1.6) arc(90:50:.5cm and  .8cm);
	\draw[thick,dashed](-6,-3.5)--(-6,3.5);
	\draw[thick,dashed](4,-3.5)--(4,3.5);
	\end{tikzpicture}
	\caption{Initial data set with capillary surface $\Sigma$.}\label{fig1}
\end{figure}

Now consider the smooth embedding 
\begin{equation}
f:\Sigma\times [0,T)\to M,\qquad f(\Sigma,t):=f_t(\Sigma)=\Sigma_t\subset M,
\end{equation} such that $\Sigma=\Sigma_0$. A variational vector field is $X_t=\frac{\partial f_t}{\partial t}$ such that $X=X_0=\frac{\partial f_t}{\partial t}|_{t=0}$ to be tangent to $\partial M$. We define the following functional
\begin{equation}\label{capillaryfunctional}
F[\Sigma_t] \equiv \int_{\Sigma_t} \theta^+_t \langle X_t,N_t \rangle \:  d\mu_{\Sigma_t}  + \int_{\partial \Sigma_t} \langle X_t,\nu_t-\text{cos}\gamma \overline{\nu}_t \rangle  d\mu_{\partial \Sigma_t}.
\end{equation}
Note that this functional is almost identical to that obtained by computing the first volume preserving variation for capillary surfaces \cite{RS97}. An important difference is that volume preserving variations imply that the contact angle $\gamma$ is constant, whereas $\gamma$ in \eqref{capillaryfunctional} is \textit{a priori} non-constant. Restricting to constant $\gamma$, we compute 
\begin{equation}
\begin{split}
\delta_XF[\Sigma]&:=\frac{\partial}{\partial t}\bigg|_{t=0}F[\Sigma_t]\\
&= \frac{\partial}{\partial t}\bigg|_{t=0}\left[\int_{\Sigma_t }\theta^+_t \: \langle X_t,N_t \rangle \: d\mu_{\Sigma_t} \right] + \frac{\partial}{\partial t}\bigg|_{t=0}\left[\int_{\partial \Sigma_t} \langle X_t,\nu_t -\text{cos}\gamma \overline{\nu}_t\rangle d\mu_{\partial \Sigma_t} \right].
\end{split}
\end{equation}
The first term gives
\begin{equation}
\frac{\partial}{\partial t}\bigg|_{t=0}\left[\int_{\Sigma_t} \theta^+_t \langle X_t,N_t \rangle \:  d\mu_{\Sigma_t}\right] = \int_{\Sigma}\left(D_X\theta^+ \: \varphi\: +\theta^+D_X\langle X,N \rangle+ \langle X,N \rangle\theta^+\text{div}_{\Sigma}X \right)d\mu_{\Sigma}.
\end{equation}
Since $\theta^+=0$ for MOTS, the second and third terms disappear. Moreover splitting $X=\varphi N+\hat{X}$ into normal and tangential components, where $\varphi\in C^{\infty}(\Sigma)$ and $\hat{X}\in T\Sigma$, we observe that $D_{\hat{X}}(\theta_+)=0$ and by Lemma \ref{variationlemma} it yields
\begin{equation}\label{bulkvariation}
\frac{\partial}{\partial t}\bigg|_{t=0}\left[\int_{\Sigma_t} \theta^+_t \langle X_t,N_t \rangle \:  d\mu_{\Sigma_t}\right]=\int_\Sigma\left[ -\varphi \Delta \varphi+2\varphi \langle W,\nabla \varphi\rangle +(\text{div} W -|W|^2+Q)\varphi^2\right] d\mu_{\Sigma}.
\end{equation}As for the second term, we get
\begin{equation}
\begin{split}
\frac{\partial}{\partial t}\bigg|_{t=0}\left[\int_{\partial \Sigma_t} \langle X_t,\nu_t-\text{cos}\gamma \overline{\nu}_t \rangle d\mu_{\partial \Sigma_t} \right] &= \int_{\partial \Sigma}  \langle \nabla_X X,\nu-\text{cos}\gamma \overline{\nu} \rangle d\mu_{\partial \Sigma} \\
& +\int_{\partial \Sigma} \langle X, \nabla_X (\nu- \text{cos}\gamma \overline{\nu})\rangle d\mu_{\partial \Sigma} \\
&+ \int_{\partial \Sigma} \langle X,\nu-\text{cos}\gamma \overline{\nu} \rangle \: \text{div}_{\partial \Sigma} X  \: d\mu_{\partial \Sigma}.
\end{split}
\end{equation}Since $\langle X,\nu-\text{cos}\gamma \overline{\nu}\rangle =0$, only the first two terms need be computed. A computation in Appendix of \cite{RS97}\footnote{We have a sign difference with \cite{RS97} for definition of second fundamental form and mean curvature. In particular, if $w$ is a unit normal to a surface, they define second fundamental form as $-\langle \nabla_{\cdot}w,\cdot \rangle$. Moreover their second fundamental form $\Pi$ also use normal $-\bar{N}$.} shows
\begin{equation}\label{bdryvariation}
\langle \nabla_X X,\nu -\text{cos}\gamma \overline{\nu}\rangle +\langle X, \nabla_X (\nu-\text{cos}\gamma\overline{\nu})\rangle = \varphi \frac{\partial \varphi}{\partial \nu}-\left[-\text{cot}\gamma A(\nu,\nu)+\frac{1}{\text{sin}\gamma}\Pi(\overline{\nu},\overline{\nu}) \right]\varphi^2.
\end{equation}
Letting $q=-\text{cot}\gamma A(\nu,\nu)+\frac{1}{\text{sin}\gamma}\Pi(\overline{\nu},\overline{\nu})$ and combining \eqref{bulkvariation} and \eqref{bdryvariation}, we have the following bilinear form
\begin{equation}
\delta_XF[\Sigma]=\int_\Sigma \left[|\nabla\varphi|^2+2\varphi \langle W,\nabla \varphi\rangle +\left(\text{div} W -|W|^2+Q\right)\varphi^2\right]d\mu_{\Sigma}- \int_{\partial\Sigma}q\varphi^2d\mu_{\partial\Sigma},
\end{equation}for all $\varphi\in C^{\infty}(\Sigma)$. In contrast to the second variation of area functional, this bilinear form is not symmetric and we cannot use Rayleigh-Ritz formula to find principle eigenvalue and define stability. However, there is a variational representation of the eigenvalue by Donsker and Varadhan \cite{DV} which applies in this setting, see Section 4 of \cite{AnderssonMarsSimon}. In particular, the corresponding eigenvalue problem  to functional $\delta_XF[\Sigma]$ is
\begin{equation}\label{OperatorL}
L(\varphi):=-\Delta \varphi + 2\langle W,\nabla \varphi\rangle +(\text{div} W -|W|^2+Q)\varphi =\lambda \varphi,\qquad \text{on $\Sigma$},
\end{equation}
\begin{equation}\label{RobinB}
B(\varphi):=\frac{\partial \varphi}{\partial \nu}-q\varphi=0, \qquad \text{on $\partial \Sigma$}.
\end{equation}
We define stability of capillary MOTS using the variation of the future null expansion.
\begin{definition}\label{Defcap}
	A capillary marginally outer trapped surface (MOTS) $\Sigma\subset M$ with constant contact angle $\gamma$ is \emph{stable} with respect to variation vector field $X$ if and only if there exists a non-negative function $\varphi\in C^{\infty}(\Sigma)$, $\varphi\not\equiv 0$ satisfying Robin boundary condition $B(\varphi)=0$ such that $\delta_X\theta_+\geq 0$. Moreover, it is called \emph{strictly stably outermost} with respect to the
	direction $X$ if, moreover, $\delta_X\theta_+\neq 0$ somewhere on $\Sigma$.
\end{definition}
The operator $\mathcal{L}:=(L,B)$ is called stability operator with Robin boundary condition. By Theorem \ref{ThmA1}, if $q\leq 0$, There is a real eigenvalue $\lambda_1$, called the principal eigenvalue, such that for any other eigenvalue $\lambda'$, $\text{Re}\lambda'\geq \lambda_1$. The associated eigenfunction $\varphi$, $\mathcal{L}(\varphi)=(L(\varphi),B(\varphi)=(\lambda_1\varphi,0)$, is unique up to a multiplicative constant, and
can be chosen to be strictly positive and it yields to.

\begin{lemma}
	Let $\Sigma$ be a capillary MOTS $\Sigma\subset M$ with constant contact angle $\gamma$ and $q\leq 0$ on $\partial\Sigma$. Then $\Sigma$ is capillary stable MOTS if and only if the principal eigenvalue of stability operator $\mathcal{L}$ satisfies
	\begin{equation}
	\lambda_1(\mathcal{L}) \geq 0.
	\end{equation}Moreover, it is strictly stable if and only if $\lambda_1(\mathcal{L})>0$
\end{lemma}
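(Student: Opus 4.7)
The plan follows the closed-MOTS strategy of Andersson--Mars--Simon, adapted to the Robin boundary setting. Since $L$ is not self-adjoint the argument cannot pass through a Rayleigh quotient; instead, Theorem \ref{ThmA1} supplies, under the hypothesis $q\le 0$, a real principal eigenvalue $\lambda_1(\mathcal{L})$ with a strictly positive eigenfunction $\varphi_1>0$ satisfying $L(\varphi_1)=\lambda_1\varphi_1$ on $\Sigma$ and $B(\varphi_1)=0$ on $\partial\Sigma$, together with a formal adjoint operator $\mathcal{L}^{*}=(L^{*},B^{*})$ sharing the same principal eigenvalue and a strictly positive dual eigenfunction $\varphi_1^{*}>0$ with the corresponding dual Robin condition.

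\textbf{Sufficiency.} First I would show that $\lambda_1(\mathcal{L})\ge 0$ implies capillary stability. Taking $X=\varphi_1 N$ and using $\theta_+=0$, Lemma \ref{variationlemma}(a) collapses to
\begin{equation*}
\delta_{\varphi_1 N}\theta_+ \;=\; -\Delta\varphi_1 + 2\langle W,\nabla\varphi_1\rangle + (Q+\text{div}\,W - |W|^2)\varphi_1 \;=\; L(\varphi_1) \;=\; \lambda_1\varphi_1 \;\ge\; 0,
\end{equation*}
with $\varphi_1>0$ and $B(\varphi_1)=0$, so $\Sigma$ is capillary stable in the sense of Definition \ref{Defcap}. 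If $\lambda_1>0$ then $\delta_X\theta_+>0$ everywhere, so $\Sigma$ is strictly stable.

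\textbf{Necessity.} Conversely, suppose $\Sigma$ is capillary stable: there exists $\psi\ge 0$, $\psi\not\equiv 0$, with $B(\psi)=0$ and $L(\psi)=\delta_{\psi N}\theta_+\ge 0$. I would pair $\psi$ with the positive dual eigenfunction $\varphi_1^{*}$. Green's identity for the pair $(L,L^{*})$, combined with the Robin conditions $B(\psi)=0$ and $B^{*}(\varphi_1^{*})=0$ (which are set up precisely so that the resulting boundary integrals cancel), yields
\begin{equation*}
\int_\Sigma \varphi_1^{*}\,L(\psi)\,d\mu_\Sigma \;=\; \int_\Sigma \psi\,L^{*}(\varphi_1^{*})\,d\mu_\Sigma \;=\; \lambda_1(\mathcal{L}) \int_\Sigma \varphi_1^{*}\psi\,d\mu_\Sigma.
\end{equation*}
The left-hand side is non-negative by stability, while the pairing $\int_\Sigma \varphi_1^{*}\psi\,d\mu_\Sigma$ is strictly positive since $\varphi_1^{*}>0$ and $\psi\ge 0$ does not vanish identically; hence $\lambda_1(\mathcal{L})\ge 0$. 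For the strict version, strict stability forces $L(\psi)\not\equiv 0$, making the left-hand side strictly positive and therefore $\lambda_1(\mathcal{L})>0$.

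\textbf{Main obstacle.} The only delicate point is the cancellation of boundary terms when integrating $L$ by parts against $\varphi_1^{*}$. The drift $2\langle W,\nabla\varphi\rangle$ in $L$ contributes a term of the form $-2\int_{\partial\Sigma}\langle W,\nu\rangle\varphi_1^{*}\psi\,d\mu_{\partial\Sigma}$, which must be absorbed into a shifted Robin coefficient $q^{*}$ defining $B^{*}$; this is precisely the point at which Theorem \ref{ThmA1} is invoked, to guarantee that the dual problem with this shifted Robin coefficient still admits a strictly positive principal eigenfunction. Once this identification is in place, the remainder---reading off the sign of $\lambda_1(\mathcal{L})$ from a signed integral identity using the positivity of $\varphi_1$ and $\varphi_1^{*}$---follows verbatim from the closed MOTS argument of Andersson--Mars--Simon.
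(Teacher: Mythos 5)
Your argument mirrors the paper's proof exactly: sufficiency from the positive principal eigenfunction $\varphi_1>0$ with $L\varphi_1=\lambda_1\varphi_1\geq 0$, $B(\varphi_1)=0$, and necessity by pairing the stability function $\psi$ against the positive dual eigenfunction $\varphi_1^*$ via $\lambda_1\langle\psi,\varphi_1^*\rangle=\langle\psi,L^*\varphi_1^*\rangle=\langle L\psi,\varphi_1^*\rangle\geq 0$. You are, if anything, more explicit than the paper about the adjoint Robin coefficient needed for the boundary terms to cancel (the paper's Theorem~\ref{ThmA1} glosses over the fact that $B^*$ is not $B$), though note that the boundary contribution of the drift actually enters with a $+$ sign, so $q^*=q-2\langle W,\nu\rangle$.
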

\begin{proof}
	If $\lambda_1(\mathcal{L}) \geq 0$, then by Thoerem \ref{ThmA1}, there exist a positive function $\varphi$ such that $\mathcal{L}\varphi=(\lambda_1\varphi,0)$ which implies $\delta_X\theta\geq 0$ and $B(\varphi)=0$. Conversely, since $\Sigma$ is stable and $\lambda_1(\mathcal{L})$ is also eigenvalue of $\mathcal{L}^*$ with positive eigenfunction $\varphi^*$, we have $\lambda_1\langle \psi,\varphi^*\rangle=\langle \psi,L^*\varphi^*\rangle=\langle L\psi,\varphi^*\rangle\geq 0$ which implies $\lambda_1(\mathcal{L}) \geq 0$.
\end{proof}
The `symmetrized' version of Definition \ref{Defcap} is as follows.
\begin{definition}\label{Defcapsym}
	Let $\Sigma$ be a capillary surface with constant contact angle $\gamma$. The symmetric operator $\mathcal{L}_s=(L_s,B_s)$ on $\Sigma$ defined as
	\begin{equation}
	L_s(\varphi):=-\Delta_{\Sigma}\psi +Q\psi,\quad \text{on $\Sigma$}\qquad\qquad B_s(\varphi):=\frac{\partial \psi}{\partial \nu} -\left(q-\langle W,\nu\rangle\right)\psi \quad \text{on $\partial\Sigma$}
	\end{equation}Then $\Sigma$ is called \emph{capillary symmetric stable} if 
	\begin{equation}
	\lambda_1(\mathcal{L}_s)\geq 0
	\end{equation}
	where $\lambda_1$ is the principal eigenvalue of $\mathcal{L}_s$ for eigenfunctions satisfying the homogeneous Robin boundary condition.
\end{definition}
Note that $\mathcal{L}_s$ is symmetric and all eigenvalues are real, therefore, we do not need extra assumption on the sign of $q$ on $\partial\Sigma$. Therefore, we have the following result. 
\begin{lemma}\label{Stablelemma}Let $\Sigma$ be a capillary surface with constant contact angle $\gamma$ and $q\leq 0$ in \eqref{RobinB}. Then $\lambda_1(\mathcal{L}) \leq \lambda_1(\mathcal{L}_s)$ and stability of $\Sigma$, $\lambda_1(\mathcal{L})\geq 0$, yields to a positive-semidefinite bilinear form
	\begin{equation}
	P(u,u):=\int_{\Sigma} \left(|\nabla u|^2 +Qu^2\right)d\mu_{\Sigma}-\int_{\partial \Sigma} \left(q-\langle W,\nu\rangle\right) u^2 d\mu_{\partial\Sigma} \geq 0
	\end{equation} for all $u\in C^{\infty}(\Sigma)$. Moreover, if $P(1,1)=0$, then $Q=0$, $W=\nabla\log\varphi$, $q=\langle W,\nu\rangle$, and $\lambda_1(L_s)=0$.
\end{lemma}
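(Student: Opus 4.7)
The plan is to exploit the existence of a positive principal eigenfunction $\varphi > 0$ of the non-self-adjoint stability operator $\mathcal{L} = (L,B)$, guaranteed under the hypothesis $q\leq 0$ by Theorem \ref{ThmA1}, and to perform the standard Galloway--Schoen/Andersson--Mars--Simon ``symmetrization trick'' in the form $\nabla \log \varphi \leftrightarrow W$, suitably adapted to handle the Robin boundary condition. The first-order term $2\langle W, \nabla \varphi\rangle$ in $L$ is exactly the obstruction to a direct Rayleigh--Ritz argument for $\mathcal{L}$, and absorbing it by this substitution is what makes the symmetrized form $P$ appear.

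First I would take $\varphi > 0$ with $L\varphi = \lambda_1(\mathcal{L}) \varphi$ on $\Sigma$ and $\partial_\nu \varphi = q\varphi$ on $\partial\Sigma$. Dividing the interior equation by $\varphi$ and using the identity $-\Delta\varphi/\varphi = -\Delta \log \varphi - |\nabla \log \varphi|^2$, a short algebraic manipulation yields
\begin{equation*}
Q = \lambda_1(\mathcal{L}) + \Delta \log \varphi + |\nabla \log \varphi - W|^2 - \operatorname{div} W \qquad \text{on } \Sigma,
\end{equation*}
while the Robin condition on $\varphi$ gives $\partial_\nu \log \varphi = q$ on $\partial\Sigma$.

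Second, I would substitute this identity into $P(u,u)$ for an arbitrary $u \in C^{\infty}(\Sigma)$, integrate $u^2 \Delta \log \varphi$ and $u^2 \operatorname{div} W$ by parts, and use the boundary identity $\partial_\nu \log \varphi = q$ to see that the boundary contributions combine to exactly cancel the term $-\int_{\partial\Sigma}(q - \langle W,\nu\rangle) u^2$ in the definition of $P$. Completing the square on the interior side then gives the clean formula
\begin{equation*}
P(u,u) = \int_{\Sigma} \bigl|\nabla u - u(\nabla \log \varphi - W)\bigr|^2 \, d\mu_\Sigma + \lambda_1(\mathcal{L}) \int_{\Sigma} u^2 \, d\mu_\Sigma.
\end{equation*}
Positive semi-definiteness of $P$ under the hypothesis $\lambda_1(\mathcal{L}) \geq 0$ is then immediate, and since $P$ is the quadratic form associated with the self-adjoint operator $\mathcal{L}_s$ under its homogeneous Robin condition, the Rayleigh--Ritz principle gives $\lambda_1(\mathcal{L}_s) \geq \lambda_1(\mathcal{L})$.

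For the rigidity statement I would specialize to $u \equiv 1$: $P(1,1) = 0$ together with $\lambda_1(\mathcal{L}) \geq 0$ forces both $\lambda_1(\mathcal{L}) = 0$ and $\nabla \log \varphi = W$ pointwise on $\Sigma$. Feeding these back into the algebraic identity above collapses it to $Q = 0$; the Robin equation $\partial_\nu \log \varphi = q$ becomes $q = \langle W,\nu\rangle$; and the constant function $1$ then satisfies $L_s 1 = 0$ together with $B_s(1) = -(q - \langle W,\nu\rangle) = 0$, so by positivity it is the principal eigenfunction of $\mathcal{L}_s$, giving $\lambda_1(\mathcal{L}_s) = 0$. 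The main care in the argument is bookkeeping of the Robin boundary terms during the integration by parts, since in the non-self-adjoint setting the cancellation against $-(q-\langle W,\nu\rangle)$ is what makes the identity work; the sign hypothesis $q \leq 0$ enters only through Theorem \ref{ThmA1} to produce the positive eigenfunction $\varphi$.
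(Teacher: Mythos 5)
Your proposal is correct and follows essentially the same route as the paper: both use the positive principal eigenfunction $\varphi$ from Theorem~\ref{ThmA1}, perform the Galloway--Schoen substitution $W \leftrightarrow \nabla\log\varphi$, and exploit the Robin condition $\partial_\nu\log\varphi = q$ to match boundary terms. Your version is slightly cleaner in that you isolate the exact identity $P(u,u)=\int_\Sigma|\nabla u - u(\nabla\log\varphi-W)|^2+\lambda_1(\mathcal{L})\int_\Sigma u^2$, from which both the comparison $\lambda_1(\mathcal{L})\le\lambda_1(\mathcal{L}_s)$ and all the rigidity conclusions drop out at once, whereas the paper organizes the same computation as an inequality chain and extracts rigidity via Cauchy--Schwarz and $P(1,u)=0$; the two arguments are equivalent.
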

\begin{proof}Let $\lambda_1(\mathcal{L})$ be the principle real eigenvalue of $\mathcal{L}$ with positive eigenfunction $\varphi$.
	By a computation in \cite{GallowaySchoen}
	\begin{equation}
	\mathcal{L}(\varphi) =\text{div}(W-\nabla \text{log}\varphi)\varphi -|W-\nabla \text{log}\varphi|^2\varphi +Q\varphi.
	\end{equation}Let $u \in C^{\infty}(\Sigma)$. Multiplying above equation by $u\varphi^{-1}$ and completing the square leads to 
	\begin{equation}
	u^2\varphi^{-1}\mathcal{L}(\varphi)=\text{div}(u^2(W-\nabla \text{log}\varphi)) +|\nabla u|^2+Qu^2-|(W-\nabla \text{log}\varphi)u+\nabla u|^2
	\end{equation}
	Integrating by parts and we have
	\begin{equation}\label{Lambdaineq}
	\begin{split}
	\lambda_1(\mathcal{L}) \int_{\Sigma} u^2 d\mu_{\Sigma}&=\int_{\Sigma} u^2\varphi^{-1}\mathcal{L} (\varphi) d\mu_{\Sigma}\\
	&=\int_{\Sigma} \left(\text{div}(u^2(W-\nabla \text{log}\varphi)+ |\nabla u|^2 +Qu^2 -|(W-\nabla \text{log}\varphi)u+\nabla u|^2\right)d\mu_{\Sigma}\\
	&\leq  \int_{\Sigma} \left(|\nabla u|^2 +Qu^2-|(W-\nabla \text{log}\varphi)u+\nabla u|^2\right)d\mu_{\Sigma}+\int_{\partial \Sigma} \langle u^2(W-\nabla \text{log}\varphi), \nu \rangle d\mu_{\partial\Sigma}  \\
	&\leq  \int_{\Sigma} \left(|\nabla u|^2 +Qu^2\right)d\mu_{\Sigma}+\int_{\partial \Sigma} \langle u^2(W-\nabla \text{log}\varphi), \nu \rangle d\mu_{\partial\Sigma} \\
	&=\int_{\Sigma} \left(|\nabla u|^2 +Qu^2\right)d\mu_{\Sigma}-\int_{\partial \Sigma} \left(q-\langle W,\nu\rangle\right) u^2 d\mu_{\partial\Sigma} 
	\end{split}
	\end{equation}where the last equality follows from Robin boundary condition \eqref{RobinB}. The result now follows from Rayleigh quotient characterization. If equality holds, then combining $P(1,1)=0$, $P(u,u)\geq 0$ for all $u\in C^{\infty}(\Sigma)$, and Cauchy-Schwarz inequality, we have $P(1,u)=0$ for all $u\in C^{\infty}(\Sigma)$. Now since $u$ is arbitrary smooth function, we have $Q=0$ and $q=\langle W,\nu\rangle$. Moreover, $P(1,1)=0$ together with the first inequality in \eqref{Lambdaineq}, we have $W=\nabla\log\varphi$. Moreover, $P(1,1)=0$ and $\lambda_1(\mathcal{L})=0$, yields to $\lambda_1(\mathcal{L}_s)=0$. This complete the proof.
\end{proof}

\section{Free Boundary Marginally Outer Trapped Surfaces}\label{Section6}
\subsection{Rigidity of Stable Free Boundary Marginally Outer Trapped Surfaces}
For free boundary surface, the contact angle in previous section is $\gamma=\pi/2$, therefore, $\nu=\overline{N}$ and $\bar{\nu}=-N$. For a properly embedded surface $(\Sigma,\partial \Sigma)\subset (M,\partial M)$, define the functional $I(\Sigma)$
\begin{equation}
I(\Sigma)\equiv |\Sigma|\inf_{\Sigma}\: (\mu+J(N))+|\partial \Sigma| \inf_{\partial \Sigma} \left(H_{\partial M} - \langle W,\nu\rangle\right) 
\end{equation} 
for a surface $\Sigma$ of area $|\Sigma|$ and boundary length $|\partial \Sigma|$. 
\begin{proposition}\label{prop1rigidity}
	Let $(M,g,k)$ be a connected initial data set with boundary $\partial M$ having second fundamental form $\Pi(\cdot,\cdot)$ and outwards pointing normal $\nu$. Let $\Sigma$ be a free boundary stable MOTS in $M$ with unit normal $N$ and $\Pi(N,N)\leq 0$ on $\partial\Sigma$. Suppose that $H_{\partial M}-\langle W,\nu \rangle$ is bounded from below on $\partial\Sigma$ and $\mu+J(N)$ is bounded from below on $\Sigma$. Then 
	\begin{equation}\label{Iin}
	I(\Sigma)  \leq 2\pi \chi(\Sigma)
	\end{equation} 
	and equality holds if and only if $\Sigma$ satisfies the following properties.
	\begin{enumerate}
		\item $\chi_+=0$, $Q=0$, $W=\nabla\log\varphi$, and $q=\Pi(N,N)=\langle W,\nu \rangle$, 
		\item $\mu+J(N)$ is constant on $\Sigma$ and equal to $\inf_{\Sigma}(\mu+J(N))$,
		\item Geodesic curvature of $\partial\Sigma$ in $\Sigma$ is constant on $\partial \Sigma$ and equal to $\inf_{\partial \Sigma} \left(H_{\partial M} - \langle W,\nu\rangle\right)$,
		\item $\lambda_1(\mathcal{L}_s)=\lambda_1(\mathcal{L})=0$. 
		\setcounter{nameOfYourChoice}{\value{enumi}}
	\end{enumerate}Furthermore, if $H_{\partial M}$ and $-\langle W,\nu \rangle$ are bounded from below on $\partial\Sigma$ instead of $H_{\partial M}-\langle W,\nu \rangle$, inequality \eqref{Iin} holds and equality gives us additional properties.
	\begin{enumerate}
		\setcounter{enumi}{\value{nameOfYourChoice}}
		\item $\langle W,\nu \rangle$ is constant on $\Sigma$ and equal to $\sup_{\Sigma}\langle W,\nu \rangle\leq 0$, and
		\item $H_{\partial M}$ is constant on $\partial \Sigma$ and equal to $\inf_{\partial \Sigma} H_{\partial M}$.
	\end{enumerate}
\end{proposition}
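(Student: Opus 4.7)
The core idea is to plug the constant test function $u\equiv 1$ into the symmetrized stability inequality of Lemma \ref{Stablelemma} and combine with Gauss--Bonnet with boundary, exploiting a cancellation between the geodesic curvature term produced by Gauss--Bonnet and the boundary term produced by $\Pi(N,N)$. First I specialize to the free boundary setting $\gamma=\pi/2$: since $\nu=\bar N$ and $\bar\nu=-N$, the Robin coefficient $q=-\cot\gamma\,A(\nu,\nu)+\tfrac{1}{\sin\gamma}\Pi(\bar\nu,\bar\nu)$ reduces to $q=\Pi(N,N)$, so the hypothesis $\Pi(N,N)\le 0$ is precisely the sign condition $q\le 0$ required to apply Lemma \ref{Stablelemma}. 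Stability then yields
\[
0\le P(1,1)=\int_\Sigma Q\,d\mu_\Sigma-\int_{\partial\Sigma}\bigl(q-\langle W,\nu\rangle\bigr)d\mu_{\partial\Sigma},\qquad Q=\tfrac12 R_\Sigma-\mu-J(N)-\tfrac12|\chi_+|^2.
\]

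Next I convert the boundary integrand into a geodesic-curvature contribution. Because $\Sigma$ meets $\partial M$ orthogonally, $N$ is tangent to $\partial M$ along $\partial\Sigma$, so together with a unit tangent $T$ to $\partial\Sigma$ it forms an orthonormal frame of $T\partial M$, giving $H_{\partial M}=\Pi(T,T)+\Pi(N,N)$. A standard computation identifies $\Pi(T,T)$ with the geodesic curvature $\kappa$ of $\partial\Sigma$ in $\Sigma$, so $\Pi(N,N)=H_{\partial M}-\kappa$. Substituting this and applying Gauss--Bonnet $\int_\Sigma K_\Sigma\,d\mu_\Sigma+\int_{\partial\Sigma}\kappa\,d\mu_{\partial\Sigma}=2\pi\chi(\Sigma)$, the two $\kappa$-terms cancel exactly, producing
\[
2\pi\chi(\Sigma)\ge \int_\Sigma\bigl[\mu+J(N)+\tfrac12|\chi_+|^2\bigr]d\mu_\Sigma+\int_{\partial\Sigma}\bigl[H_{\partial M}-\langle W,\nu\rangle\bigr]d\mu_{\partial\Sigma}.
\]
Dropping the nonnegative $\tfrac12|\chi_+|^2$ and bounding each integral by the infimum of its integrand gives $2\pi\chi(\Sigma)\ge I(\Sigma)$.

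For the rigidity statement I trace back every inequality. Equality forces $\chi_+=0$, $\mu+J(N)\equiv\inf_\Sigma(\mu+J(N))$, and $H_{\partial M}-\langle W,\nu\rangle\equiv\inf_{\partial\Sigma}(H_{\partial M}-\langle W,\nu\rangle)$, giving (2) and half of (1), (3). The vanishing $P(1,1)=0$ activates the rigidity clause of Lemma \ref{Stablelemma}, yielding $Q=0$, $W=\nabla\log\varphi$, $q=\langle W,\nu\rangle$ (hence $\Pi(N,N)=\langle W,\nu\rangle$), and $\lambda_1(\mathcal L_s)=0$; combined with $\lambda_1(\mathcal L)\le\lambda_1(\mathcal L_s)$ and stability $\lambda_1(\mathcal L)\ge 0$, this delivers (4). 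The geodesic curvature assertion in (3) follows since $\kappa=H_{\partial M}-\Pi(N,N)=H_{\partial M}-\langle W,\nu\rangle$. For (5)--(6), under the alternative hypothesis one splits the boundary integral and applies the weaker estimate $\int_{\partial\Sigma}(H_{\partial M}-\langle W,\nu\rangle)\ge |\partial\Sigma|(\inf H_{\partial M}-\sup\langle W,\nu\rangle)$; equality now forces $H_{\partial M}$ and $\langle W,\nu\rangle$ to be individually constant on $\partial\Sigma$, and $\sup\langle W,\nu\rangle\le 0$ follows from $\langle W,\nu\rangle=\Pi(N,N)\le 0$.

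The main obstacle is bookkeeping rather than any deep estimate: one must verify the $\gamma=\pi/2$ reduction of $q$, the sign convention relating $\Pi(T,T)$ to $\kappa$, and ensure the Gauss--Bonnet $\kappa$-term cancels the $\kappa$ produced by expanding $\Pi(N,N)$, and then allocate each item in the equality list (1)--(6) to the correct step in the inequality chain --- Lemma \ref{Stablelemma}, the drop of $|\chi_+|^2$, the infimum estimate, or the rigidity addendum of Lemma \ref{Stablelemma}.
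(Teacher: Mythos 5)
Your proposal is correct and follows essentially the same route as the paper: plug $u\equiv 1$ into the symmetrized stability form from Lemma \ref{Stablelemma} (noting that $\gamma=\pi/2$ reduces the Robin coefficient to $q=\Pi(N,N)\le 0$), use the free-boundary identity $H_{\partial M}=\kappa+\Pi(N,N)$ to cancel the geodesic-curvature term against the one coming from Gauss--Bonnet, and then bound by infima to get $I(\Sigma)\le 2\pi\chi(\Sigma)$, with rigidity extracted by tracing each inequality. The paper records the identity as $H_{\partial M}=\kappa+q$ via $\kappa=g(T,\nabla_T\nu)$, $q=g(N,\nabla_N\nu)$, which is the same orthonormal-frame computation you describe; your treatment of the equality case, including the use of $\lambda_1(\mathcal L)\le\lambda_1(\mathcal L_s)$ together with stability to get $\lambda_1(\mathcal L)=\lambda_1(\mathcal L_s)=0$, matches the paper's.
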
 
\begin{proof}
	Using the notation above, the free boundary condition implies that the geodesic curvature of $\partial \Sigma$ in $\Sigma$ can be computed as $\kappa=g(T,\nabla_T\nu)$ where $\nu$ points outside of $\partial M$ and coincides with $\overline{N}$ and $T$ is tangent vector to $\partial\Sigma$. Recalling that the mean curvature is the trace of the shape operator $\nabla \nu$, we have 
	\begin{equation}\label{HdM}
	H_{\partial M}=\kappa+g(N,\nabla_N \nu)=\kappa+q
	\end{equation}
	Since $\Sigma$ is a free boundary stable MOTS, by Lemma \ref{Stablelemma}, we have a positive semi-definite bilinear form
	\begin{equation}
	P(u,u) \equiv \int_{\Sigma} \left(|\nabla u |^2+Qu^2\right)d\mu_{\Sigma} - \int_{\partial \Sigma}(q-\langle W,\nu\rangle )u^2 d\mu_{\partial\Sigma}\geq 0
	\end{equation}for all $u\in C^{\infty}(\Sigma)$. Combining above equation with $u=1$, $Q=\frac{1}{2}R_{\Sigma}-\mu-J(N)-\frac{1}{2}|\chi^+|^2$ and \eqref{HdM}, we get 
	\begin{equation}\label{P(1,1)in}
	\begin{split}
	0&\geq \int_{\Sigma} \left(\mu+ J(N) +\frac{1}{2}|\chi_+|^2\right)d\mu_{\Sigma}-\frac{1}{2}\int_{\Sigma} R_{\Sigma}d\mu_{\Sigma}-\int_{\partial \Sigma}\kappa d\mu_{\partial\Sigma}+\int_{\partial \Sigma}\left(H_{\partial M}-\langle W,\nu \rangle\right) d\mu_{\partial\Sigma}\\
	&=\int_{\Sigma} \left(\mu+ J(N) +\frac{1}{2}|\chi_+|^2\right)d\mu_{\Sigma}+\int_{\partial \Sigma}\left(H_{\partial M}-\langle W,\nu \rangle\right) d\mu_{\partial\Sigma}-2\pi\chi(\partial\Sigma) \\
	&\geq |\Sigma| \inf_{\Sigma} (\mu+J(N)) +|\partial \Sigma|\inf_{\Sigma} \left(H_{\partial M}  - \langle W,\nu\rangle\right)- 2\pi \chi(\Sigma)\\
	&=I(\Sigma)-2\pi \chi(\Sigma)
	\end{split}
	\end{equation}where the first equality follows from the Gauss-Bonnet Theorem. Therefore, $I(\Sigma)\leq 2\pi \chi(\Sigma)$. The rigidity is equivalent to $\chi_+=0$, $\mu+J(N)$ is constant on $\Sigma$ and equal to $\inf_{\Sigma}(\mu+J(N))$, $H_{\partial M}-\langle W,\nu \rangle$ is constant on $\partial \Sigma$ and equal to $\inf_{\partial \Sigma} \left(H_{\partial M}-\langle W,\nu \rangle\right)$, and $P(1,1)=0$, thus, it follows from Lemma \ref{Stablelemma} that $Q=0$, $W=\nabla\log\varphi$, and $q=\Pi(N,N)=\langle W,\nu \rangle$. Moreover, since $\Pi(N,N)=\langle W,\nu \rangle$ and $\Pi(N,N)\leq 0$, we have $\sup_{\partial\Sigma} \langle W,\nu \rangle\leq 0$. Combining this with \eqref{HdM}, we get $\partial\Sigma$ has constant geodesic curvature $\kappa=\inf_{\Sigma}\left(H_{\partial M}-\langle W,\nu \rangle\right).$
	
	Now, if we assume $H_{\partial M}$ and $-\langle W,\nu \rangle$ are bounded from below on $\partial\Sigma$ instead of $H_{\partial M}-\langle W,\nu \rangle$, then from \eqref{P(1,1)in} we have $\langle W,\nu \rangle$ and $H_{\partial M}$ are constant on $\partial\Sigma$ and equal to $\sup_{\Sigma}\langle W,\nu \rangle\leq 0$ and $\inf_{\partial \Sigma} H_{\partial M}$, respectively.
\end{proof}
For rigidity of free boundary stable MOTS, we need to impose some assumption on neighborhood of embedding of $\Sigma$ in $M$. Given $\zeta>0$ and define $M_{\zeta}$ to be a neighborhood of $\Sigma$ with geodesic distance $\zeta$ and boundary $\partial M_{\zeta}$. Moreover, we define 
\begin{equation}
I_{\zeta}(\Sigma)\equiv |\Sigma|\inf_{M_{\zeta}}\: (\mu-|J|)+|\partial \Sigma| \inf_{\partial M_{\zeta}} \left(H_{\partial M_{\zeta}}  - \langle W,\nu\rangle\right).
\end{equation} Then we have the following rigidity result.
\begin{theorem}\label{prop2rigidity}Let $(M,g,k)$ be an initial data set with constants $c_i>0$ such that $\mu-|J|\geq c_1$ on $M_{\zeta}$, $H_{\partial M_{\zeta}}\geq -c_2$ on $\partial M_{\zeta}$, and $\langle W,\nu\rangle\leq c_3$ on $\partial M_{\zeta}$.  Let $\Sigma$ be a properly embedded, area minimizing, free boundary weakly outermost MOTS in $M_{\zeta}$ with $I_{\zeta}(\Sigma)=2\pi \chi(\Sigma)$ and $\max_{\partial\Sigma}\Pi(N,N)=0$. Finally assume that either one of the following holds.
	\begin{enumerate}[(i)]
		\item\label{i} $\partial \Sigma$ is locally length minimizing in $\partial M_{\zeta}$, or
		\item\label{ii} $\inf_{\partial M_{\zeta}} \left(H_{\partial M_{\zeta}}  - \langle W,\nu\rangle\right)=0$.
	\end{enumerate}
	Then
	\begin{enumerate}
		\item There exist $0<\epsilon<\zeta$ such that $M_{\zeta}$ splits as $([0,\epsilon)\times\Sigma, dt^2+g_\Sigma)$.
		\item $\Sigma_t=\{t\}\times\Sigma$, for all $t\in [0,\epsilon)$, is totally geodesic as a submanifold of spacetime, i.e., $\chi_+=\chi_-=0$, and as submanifold of $M$, i.e., $A=0$.
		\item $\Sigma_t$ has constant Gauss curvature $\inf_{M_{\zeta}}\mu$.
		\item $J=k(\cdot,\cdot)|_{T\Sigma_t}=k(\cdot,N_t)|_{T\Sigma_t}=Q_t=0$ and scalar curvature $R_M=2\mu\geq c_1$.
		\setcounter{nameOfYourChoice}{\value{enumi}}
	\end{enumerate}
	Moreover, 
	\begin{enumerate}
		\setcounter{enumi}{\value{nameOfYourChoice}}
		\item If we assume (\ref{i}), $\partial\Sigma_t$ in $\Sigma_t$  has constant geodesic curvature $\inf_{\partial M_{\zeta}}H_{\partial M_{\zeta}}$ and $\partial\Sigma_t$ in $\partial M_{\zeta}$  has vanishing geodesic curvature.
		\item  If we assume (\ref{ii}), $\partial\Sigma_t$ in $\Sigma_t$ has vanishing geodesic curvature and $H_{\partial M_{\zeta}}=0$.
	\end{enumerate}
\end{theorem}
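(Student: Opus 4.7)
The plan is to follow the standard MOTS-foliation technique used in rigidity results (Galloway-Schoen \cite{GallowaySchoen}, Galloway-Mendes \cite{GM18}), adapted to the free boundary setting along the lines of Ambrozio's proof of the analogous minimal surface theorem \cite{A13}. The strategy is to build a one-sided foliation of a collar of $\Sigma$ by MOTS, transfer the infinitesimal rigidity of Proposition \ref{prop1rigidity} to every leaf, and then read off the splitting from constancy of the lapse.

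First, I would apply Proposition \ref{prop1rigidity} to the equality hypothesis $I_{\zeta}(\Sigma) = 2\pi\chi(\Sigma)$. This yields $\chi_+ \equiv 0$, $Q \equiv 0$, $W = \nabla\log\varphi$ for the principal eigenfunction $\varphi$ of $\mathcal{L}$, the boundary identity $\Pi(N,N) = \langle W,\nu\rangle = q$ constant along $\partial\Sigma$, and $\lambda_1(\mathcal{L}) = \lambda_1(\mathcal{L}_s) = 0$. The dominant energy assumption $\mu - |J| \geq c_1$ together with $J(N) \geq -|J|$ gives $\mu + J(N) \geq c_1 > 0$, and by the rigidity statement of Proposition \ref{prop1rigidity} this quantity is constant on $\Sigma$. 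The area-minimizing hypothesis forces $H = 0$ through the first variation of area, and then $\theta_+ = H + P = 0$ yields $P = 0$ on $\Sigma$. Combined with $\max_{\partial\Sigma}\Pi(N,N) = 0$ and the constancy of $\Pi(N,N)$, one gets $\Pi(N,N) \equiv 0$, hence $\langle W,\nu\rangle \equiv 0$ on $\partial\Sigma$; the dichotomy (i)/(ii) is then distinguished through identity \eqref{HdM} by whether the geodesic curvature of $\partial\Sigma$ in $\Sigma$ vanishes.

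Next, using the positive principal eigenfunction $\varphi$ supplied by Theorem \ref{ThmA1}, I would construct a one-sided foliation of a neighborhood of $\Sigma$ by normal graphs $\Sigma_t = \{\exp_x(u(x,t) N(x)) : x \in \Sigma\}$ with $u(\cdot,0) = 0$, $\partial_t u|_{t=0} = \varphi$, and $\partial\Sigma_t \subset \partial M_{\zeta}$. Because $\lambda_1(\mathcal{L}) = 0$, the linearization has nontrivial kernel, so a Lyapunov-Schmidt reduction around $\varphi$ is needed: with a Lagrange-multiplier correction, one solves for $u(\cdot,t)$ so that $\theta_+(\Sigma_t) \equiv c(t)$ is constant on each leaf, with $c(0) = 0$ and $c'(0) = 0$ by direct computation from Lemma \ref{variationlemma}. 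The central task is then to show $c(t) \equiv 0$ for small $t \geq 0$. The weakly outermost hypothesis should rule out $c(t_0) < 0$: an outer-trapped leaf $\Sigma_{t_0}$ combined with an outer-untrapped barrier supplied by later leaves or by the geometry of $\partial M_{\zeta}$ would, by a free boundary analogue of the Andersson-Metzger-Eichmair barrier method, produce a MOTS strictly outside $\Sigma$. The opposite inequality $c(t) > 0$ should be excluded by area-minimality: using $H = P = 0$ on $\Sigma$ together with the established infinitesimal rigidity and the lower bound $\mu + J(N) \geq c_1 > 0$, a second-variation computation along the foliation shows that $c(t) > 0$ would give $|\Sigma_t| < |\Sigma|$ for small positive $t$, contradicting the minimizing property.

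With $c(t) \equiv 0$, every leaf $\Sigma_t$ is a MOTS which, by continuity, inherits all hypotheses of Proposition \ref{prop1rigidity} and saturates $I_{\zeta}(\Sigma_t) = 2\pi\chi(\Sigma_t)$; hence the rigidity transfers to each leaf: $\chi_+^t \equiv 0$, $Q_t \equiv 0$, $H_t = P_t = 0$, $W_t = \nabla\log\varphi_t$, and $\langle W_t, \nu_t\rangle \equiv 0$ on $\partial\Sigma_t$. The Robin boundary condition together with positivity of $\varphi_t$ and $W_t = \nabla\log\varphi_t$ forces $\varphi_t$ constant, so $W_t = 0$ on $\Sigma_t$; after reparameterizing $t$ the lapse is identically one, and the metric takes the form $dt^2 + g_{\Sigma_t}$. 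A Schoen-Yau style manipulation of the constraint equations using $\chi_+^t = 0$ and $H_t = P_t = 0$ then forces $A_t \equiv 0$, $k|_{T\Sigma_t} = 0$, $k(\cdot,N_t)|_{T\Sigma_t} = 0$, $J = 0$, and $R_M = 2\mu$, from which $g_{\Sigma_t}$ is $t$-independent and the full product splitting follows; $K_{\Sigma_t} = \mu$ is then read off from $Q_t = 0$, and the boundary conclusions of (i) and (ii) follow by evaluating \eqref{HdM}. The two main obstacles I foresee are the Lyapunov-Schmidt step for the non-self-adjoint Robin problem (requiring careful use of Theorem \ref{ThmA1} to identify both kernel and cokernel), and the exclusion of $c(t) > 0$ by area-minimality, whose boundary contribution genuinely uses $\max_{\partial\Sigma}\Pi(N,N) = 0$ and the (i)/(ii) dichotomy.
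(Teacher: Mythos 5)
Your overall blueprint (apply Proposition \ref{prop1rigidity}, build a one-sided foliation of constant-$\theta_+$ free-boundary leaves using the principal eigenfunction from Theorem \ref{ThmA1}, show the leaves are all MOTS, and then read off the splitting) matches the paper's three-step structure, and the foliation construction via the implicit function theorem around the simple zero eigenvalue is essentially what the paper does. However, there are two genuine gaps in how you fill in the middle and last steps.

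First, your mechanism for showing $c(t)\equiv\theta_+(t)\equiv 0$ is not the paper's and, as stated, does not close. You assert that ``a second-variation computation along the foliation shows that $c(t)>0$ would give $|\Sigma_t|<|\Sigma|$.'' But $\theta_+=H+P$, and $\theta_+(t)>0$ carries no sign information about $H(t)$ alone; the first variation of area is governed by $H(t)$, not by $\theta_+(t)$, and at $t=0$ we already have $H=0$, so the sign of $\tfrac{d}{dt}|\Sigma_t|$ at later times is not controlled this way. The paper avoids this issue entirely: it defines $\mathcal{J}(t)=\theta_+(t)e^{-\int_0^t\alpha}$ and, using the stability inequality, the Gauss--Bonnet theorem, the saturation $I_\zeta(\Sigma)=2\pi\chi(\Sigma)$, and hypothesis (i) or (ii), derives the differential inequality $\beta(t)\mathcal{J}'(t)\le (|\Sigma|-|\Sigma_t|)\inf_{M_\zeta}(\mu-|J|)$. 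The area-minimizing assumption is used as an \emph{input} on the right-hand side (it gives $|\Sigma_t|\ge|\Sigma|$), forcing $\mathcal{J}'(t)\le 0$, hence $\theta_+(t)\le 0$, which combined with the weakly outermost bound $\theta_+(t)\ge 0$ gives the conclusion. This is logically inverted relative to your argument, and the ODE comparison is the missing key step.

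Second, in the final rigidity step your claim that ``the Robin boundary condition together with positivity of $\varphi_t$ and $W_t=\nabla\log\varphi_t$ forces $\varphi_t$ constant'' is incorrect. The Robin condition $\partial_\nu\varphi_t=q\,\varphi_t$ with $q=\langle W_t,\nu_t\rangle=\partial_\nu\log\varphi_t$ is automatically consistent with $W_t=\nabla\log\varphi_t$ and imposes nothing further. What actually kills $\nabla\varphi_t$ (and $W_t$, $J$, $\chi_{t-}$) in the paper is a separate argument: since $\theta_-(t)\equiv 0$, the variation $\delta\theta_-$ vanishes; substituting $W_{t-}=-\nabla\log\varphi_t$, $\chi_{t+}=0$, $Q_t=0$, multiplying by $\varphi_t>0$, integrating over $\Sigma_t$, and using the boundary identity $\partial_{\nu_t}\log\varphi_t=\Pi(N_t,N_t)$ gives a signed identity in which the bulk integrand is $\le 0$ and the boundary integrand $\Pi(N_t,N_t)\varphi_t\ge 0$ (using $\max_{\partial\Sigma}\Pi(N,N)=0$ and the transferred rigidity), forcing every term to vanish. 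Without this $\theta_-$-integration argument you do not get $W_t=0$, and hence you cannot conclude $A_t=0$, $k|_{T\Sigma_t}=0$, the constant lapse, or the product splitting.
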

\begin{remark}
	In \cite{A13}, there is mean convex  an assumption on boundary which is substitute with weaker assumptions $H_{\partial M_{\zeta}}\geq -c_2$ on $\partial M_{\zeta}$, $\langle W,\nu\rangle\leq c_3$ on $\partial M_{\zeta}$, and $\max_{\partial\Sigma}\Pi(N,N)=0$.
\end{remark}
\begin{proof}
	The argument combines \cite{A13} and \cite{GM18} and follows from three steps: (1) construct a foliation free boundary hypersurfaces $\{\Sigma_t\}$ each having constant $\theta_+(t)$, (2) show that $\theta_+(t)=0$ for each $\Sigma_t$, and (3) obtain the rigidity statements. \\ \\
	\noindent\emph{Step 1. Foliation $\{\Sigma_t\}$}. We seek to construct a foliation around $\Sigma$ of free boundary surfaces with constant $\theta_+$. For small $u\in C^{\infty}(\Sigma)$, we define the surface $\Sigma_u=\exp(u(x)N)$ and the future directed null expansion $\theta_+(u)$ of $\Sigma_u=\Sigma[u]$ such that $\Sigma_{0}=\Sigma$. Consider the following operator
	\begin{equation}
	\Psi: C^{2,\alpha}(\Sigma)\times \mathbb{R}\longrightarrow  C^{0,\alpha}(\Sigma)\times   C^{1,\alpha}(\partial\Sigma)\times \mathbb{R},
	\end{equation}such that 
	\begin{equation}
	\Psi(u,s)=\left(\theta_+(u)-k(s),g(N_u,\nu_u),\int_{\Sigma}u d\mu_{\Sigma}\right).
	\end{equation}Then we compute the linearization of $\Psi$ at $(0,0)$ as follows
	\begin{equation}
	\begin{split}
	D\Psi\big|_{(0,0)}(u,s)&=\frac{d}{dr}\big|_{r=0}\Psi(ru,rs)\\
	&=\left(L(u)-k'(0),-\frac{\partial u}{\partial\nu}+g(N,\nabla_N\nu)u,\int_{\Sigma}u d\mu_{\Sigma}\right),
	\end{split}
	\end{equation}where $L$ is stability operator of MOTS and $\frac{d}{dr}\big|_{r=0}g(N_{ru},\nu_{ru})=-\frac{\partial u}{\partial\nu}+g(N,\nabla_N\nu)u=-\frac{\partial u}{\partial\nu}+qu=-B(u)$, cf. Proposition 17 of \cite{A13}. To apply inverse function theorem we need to show $D\Psi\big|_{(0,0)}(u,s)$ is an isomorphism. In particular, we want to show that the operator $\mathcal{L}(u)=(f,\gamma)$ is invertible for $f\in C^{0,\alpha}(\Sigma)$ and $\gamma\in C^{1,\alpha}(\partial\Sigma)$. The corresponding eigenvalue problem is $\mathcal{L}(u)=(\lambda u,0)$. Since by Proposition \ref{prop1rigidity}, the principle simple eigenvalue $\lambda_1(\mathcal{L})=0$ with corresponding positive eigenfunction $\varphi$, the kernel of $\mathcal{L}$ is constant multiple of $\varphi$. By Fredholm alternative,  $\mathcal{L}(u)=(L(u), B(u))=(f,\gamma)$ has a unique solution over Banach spaces module kernel of $\mathcal{L}$ if and only if $\int_{\Sigma}f\varphi^*=0$, where $\varphi^*$ is eigenfunction of $\mathcal{L}^*$ correspond to eigenvalue $\lambda_1(\mathcal{L})=0$ (Note that by Theorem \ref{ThmA1}, $\lambda_1$ is also eigenvalue of the adjoint operator $\mathcal{L}^*$). Therefore, $D\Psi\big|_{(0,0)}(u,s)$ is invertible over Banach spaces module kernel of $\mathcal{L}$. Thus, by the inverse function theorem, there exist $\epsilon>0$, for $s\in (-\epsilon,\epsilon)$, and $u(s)\in C^{\infty}(\Sigma)$ such that $\epsilon<\zeta$ and $\Psi(u,s)=(0,0,s)$ or equivalently 
	\begin{equation}
	\theta_+(u(s))=k(s),\qquad g(N_u,\nu_u)=0,\qquad \int_{\Sigma}u(s) d\mu_{\Sigma}=s.
	\end{equation} Using chain rule and differentiate at $s=0$ yields to
	\begin{equation}\label{eqndiff}
	u'(0)L(0)=k'(0),\qquad -\frac{\partial u'(0)}{\partial\nu}+g(N,\nabla_N\nu)u'(0)=0,\qquad \int_{\Sigma}u'(0) d\mu_{\Sigma}=1.
	\end{equation}Since $\int_{\Sigma}k'(0)\varphi^*d\mu_{\Sigma}=0$ and $\varphi^*>0$, we have $k'(0)=0$. This shows that $u'(0)$ is in the kernel of $\mathcal{L}$. Combining this with integral condition in \eqref{eqndiff}, we have  $u'(0)=\text{constant}\cdot \varphi>0$. This means there exist a local coordinate $(t,x^i)$ such that for small $t$ surfaces $\Sigma_{t}:=\Sigma[u(t)]=\{t\}\times \Sigma$ form smooth
	foliation of constant $\theta_+(t)$ free boundary hypersurfaces.
	Let $h_t$ be induced metric on $\Sigma_t$ and $0<\varphi_t\in C^{\infty}(\Sigma)$, which is the speed of surfaces with variation vector $\frac{\partial}{\partial t}=\varphi_t N_t$ such that $\varphi_0=u'(0)$, then the metric on $\Sigma\times (-\epsilon,\epsilon)$ has the following form
	\begin{equation}
	g=\varphi_t^2dt^2+h_t.
	\end{equation}
	\noindent\emph{Step 2. Each $\Sigma_t$ is a stable free boundary MOTS.}
	We now show that $\theta_+(t)=0$ for all $t\in [0,\epsilon)$. First, observe that the weakly outermost (i.e., no outer trapped surface in $M_{\zeta}$ homologous to $\Sigma$) property of $\Sigma$ implies $\theta_+(t)\geq 0$ on $\Sigma_t$ for $t\in[0,\epsilon)$. Next, by Lemma \ref{Stablelemma} we have 
	\begin{equation}\label{vartheta+}
	\frac{d\theta_+(t)}{dt}=-\Delta_{t} \varphi_t+2\langle W_{t},\nabla_{t} \varphi_t\rangle +\left(Q_{t}+ \text{div}_{t}W_{t}-|W_{t}|^2+(\text{tr}_gk)_t\theta_+(t) -\frac{1}{2}\theta_+(t)^2\right)\varphi_t,
	\end{equation}and
	\begin{equation}\label{Bcfree}
	-\frac{\partial \varphi_t}{\partial\nu_t}+g(N_t,\nabla_{N_t}\nu_t)\varphi_t=0,
	\end{equation}
	where subscript $t$ means evaluated on $\Sigma_t$ and \eqref{Bcfree} follows from the variation of free boundary condition, cf. Proposition 18 of \cite{A13}. Since $\varphi_t>0$ we have
	\begin{equation}\label{theta'1}
	\begin{split}
	\frac{\theta'_{+}(t)}{\varphi_t}&=-\nabla_t\log\varphi_t +2\langle W_t,\nabla_t\log\varphi_t \rangle -|W_t|^2+\emph{div}_t W_t+Q_t+\theta_+(t) (\text{tr}_gk)_t-\frac{1}{2}\theta_+(t)^2\\
	&\leq \text{div}_t(W_t-\nabla_t \log \varphi_t)+Q_t+\theta_+(t) (\text{tr}_gk)_t.
	\end{split}
	\end{equation}Define the function 
	\begin{equation}
	\mathcal{J}(t):=\theta_+(t)e^{-\int_0^t\alpha(s)ds},\qquad\alpha(t):=\left(\int_{\Sigma_t}\frac{1}{\varphi_t}d\mu_{\Sigma_t}\right)^{-1}\int_{\Sigma_t}(\text{tr}_gk)_t d\mu_{\Sigma_t}
	\end{equation} such that $\mathcal{J}(0)=0$.
	Together with definition of $Q$, \eqref{Bcfree}, \eqref{theta'1}, and the fact that $\theta_+(t)$ is constant on $\Sigma_t$, we have 
	\begin{equation}\label{J'1}
	\begin{split}
	\beta(t)\mathcal{J}'(t)&=\theta'_{+}(t)\int_{\Sigma_t}\frac{1}{\varphi_t}d\mu_{\Sigma_t}-\theta_+(t)\int_{\Sigma_t}(\text{tr}_gk)_t d\mu_{\Sigma_t}\\
	&\leq \int_{\partial \Sigma_t} \langle W_t,\nu_t\rangle d\mu_{\partial \Sigma_t} -\int_{\partial \Sigma_t} \partial_{\nu_t}\log\varphi_t d\mu_{\partial \Sigma_t} +\int_{\Sigma_t} Q_t d\mu_{\Sigma_t}\\
	&= \int_{\partial \Sigma_t} \left(\langle W_t,\nu_t\rangle -\Pi(N_t,N_t)\right) d\mu_{\partial \Sigma_t}+ \int_{\Sigma_t} \left(K_t-\mu -J(N_t)-\frac{1}{2}|\chi_{t+}|^2\right) d\mu_{\Sigma_t}\\
	&\leq  \int_{\partial \Sigma_t} \left(\langle W_t,\nu_t\rangle -\Pi(N_t,N_t)\right) d\mu_{\partial \Sigma_t}+ \int_{\Sigma_t} \left(K_t-\mu +|J|-\frac{1}{2}|\chi_{t+}|^2\right) d\mu_{\Sigma_t},
	\end{split}
	\end{equation} where we used  $\partial_{\nu_t}\log\varphi_t=g(N_t,\nabla_{N_t}\nu_t)=q=\Pi(N_t,N_t)$ and $\beta(t)=\left(\int_{\Sigma_t}\frac{1}{\varphi_t}d\mu_{\Sigma_t}\right)e^{\int_0^t\alpha(s)ds}>0$. Combining this with the geodesic curvature equation \eqref{HdM} and Gauss-Bonnet theorem we get
	\begin{equation}\label{J'2}
	\begin{split}
	\beta(t)\mathcal{J}'(t)&\leq -|\partial {\Sigma_t}| \inf_{\partial M_{\zeta}}\left(H_{\partial M_{\zeta}}  - \langle W,\nu\rangle\right) -|\Sigma_t|\inf_{M_{\zeta}}\left(\mu -|J|\right) + 2\pi\chi(\Sigma_t) -\int_{\Sigma_t}\frac{1}{2}|\chi_{t+}|^2 d\mu_{\Sigma_t}\\
	&\leq -|\partial {\Sigma_t}| \inf_{\partial M_{\zeta}}\left(H_{\partial M_{\zeta}}  - \langle W,\nu\rangle\right) -|\Sigma_t|\inf_{M_{\zeta}}\left(\mu -|J|\right) + 2\pi\chi(\Sigma_t).
	\end{split}
	\end{equation}Together with the infinitesimal rigidity $I_{\zeta}(\Sigma)=2\pi \chi(\Sigma)$ and $\chi(\Sigma)=\chi(\Sigma_t)$ leads to 
	\begin{equation}\label{J'3}
	\begin{split}
	\beta(t)\mathcal{J}'(t)&\leq I_{\zeta}(\Sigma)-I_{\zeta}(\Sigma_t)\\
	&=\left(|\Sigma|-|\Sigma_t|\right)\inf_{M_{\zeta}}\: (\mu-|J|)+\left(|\partial\Sigma|-|\partial\Sigma_t|\right) \inf_{\partial M_{\zeta}} \left(H_{\partial M_{\zeta}} -\langle W,\nu\rangle\right)\\
	&\leq \left(|\Sigma|-|\Sigma_t|\right)\inf_{M_{\zeta}}\: (\mu-|J|).
	\end{split}
	\end{equation}where the last inequality follows from either $\partial \Sigma$ is locally length minimizing in $\partial M_{\zeta}$ and $H_{\partial M_{\zeta}} -\langle W,\nu\rangle\geq 0$ or $\inf_{\partial M_{\zeta}} \left(H_{\partial M_{\zeta}} -\langle W,\nu\rangle\right)=0$. Therefore, since $\Sigma$  is area minimizing in $M_{\zeta}$ and $\mu-|J|\geq c_1>0$, we have 
	\begin{equation}\label{J'4}
	\mathcal{J}'(t)\leq 0.
	\end{equation}Combining this with initial condition $\mathcal{J}(0)=0$, we have $\mathcal{J}(t)\leq 0$ for $t\in[0,\epsilon)$ that leads to $\theta(t)\leq 0$ for all $t\in[0,\epsilon)$. Therefore, $\theta_+(t)=0$ for all $t\in[0,\epsilon)$. In addition $\Sigma_t$, for $t\in[0,\epsilon)$, is weakly outermost free boundary stable MOTS and 
	\begin{equation}
	I_{\zeta}(\Sigma_t)=2\pi\chi(\Sigma_t).
	\end{equation}
	\noindent\emph{Step 3. Rigidity statements.} Assume either (i) or (ii) holds. By substituting $\theta_+(t)=\theta_+'(t)=0$ in \eqref{J'1}, \eqref{J'4}, and following argument of Proposition \ref{prop1rigidity}, we have for all $t\in[0,\epsilon)$ the following properties:
	\begin{enumerate}[(I)]
		\item\label{I} $|\Sigma_t|=|\Sigma|$, $J(N)=-|J|$, $\chi_{t+}=0$, $Q_t=0$, $W_t=\nabla\log\varphi_t$, and $\Pi(N_t,N_t)=\langle W_t,\nu_t \rangle$,
		\item $\mu+J(N)$ is constant on $\Sigma_t$ and equal to $\inf_{M_{\zeta}}(\mu-|J|)$,
		\item\label{III} $H_{\partial M_{\zeta}}$ is constant on $\partial M_{\zeta}$ and equal to $\inf_{\partial M_{\zeta}}H_{\partial M_{\zeta}}$,
		\item\label{IV} $\langle W_t,\nu_t \rangle$ is constant on $\partial M_{\zeta}$ and equal to $\sup_{\partial M_{\zeta}}\langle W,\nu \rangle$,
		\item $\lambda_1(\mathcal{L}_s)=0$,
		\item \label{VI} If we assume (\ref{i}), we have $|\partial\Sigma_t|=|\partial\Sigma|$,
		\item\label{VII} If we assume (\ref{ii}) we have $H_{\partial M_{\zeta}}=\langle W_t,\nu_t \rangle=\inf_{\partial M_{\zeta}}H_{\partial M_{\zeta}}=\sup_{\partial M_{\zeta}}\langle W,\nu \rangle$.
	\end{enumerate}It follows from (\ref{I}) and (\ref{VI}) that for all $t\in[0,\epsilon)$, $|\Sigma_t|$ is area minimizing in $M_{\zeta}$ and $|\partial\Sigma_t|$ is locally length minimizing in $\partial M_{\zeta}$. Therefore, mean curvatures $H(t)$ and $\langle \nabla_{T_t}N_t,T_t\rangle$ of $\Sigma_t$ in $M_{\zeta}$ and $\partial\Sigma_t$ in $\partial M_{\zeta}$, respectively, must be non-negative. By the first variation formula we have
	\begin{equation}
	0=\frac{d}{dt}|\Sigma_t|=\int_{\Sigma_t}H(t)\varphi_td\mu_{\Sigma_t},\qquad \quad 0=\frac{d}{dt}|\partial\Sigma_t|=\int_{\partial\Sigma_t}\langle \nabla_{T_t}N_t,T_t\rangle\varphi_td\mu_{\partial\Sigma_t}.
	\end{equation} Together with $\varphi_t>0$ yields to $H(t)=\langle \nabla_{T_t}N_t,T_t\rangle=0$ for all $t\in[0,\epsilon)$. This means for all $t\in[0,\epsilon)$, $\Sigma_t$ is a minimal surface in $M_{\zeta}$ and $\partial\Sigma_t$ has vanishing geodesic curvature in $\partial M_{\zeta}$ for only case (\ref{i}). Together with $\theta_+(t)=0$ and $\theta_{\pm}=\pm H(t)+\text{tr}_{\Sigma}k(t)$, we get $\theta_-(t)=\text{tr}_{\Sigma}k(t)=0$ for $t\in[0,\epsilon)$. 
	
	By assumption $\max_{\partial\Sigma}\Pi(N,N)=0$, property (\ref{I}), and (\ref{IV}), we have $\Pi(N_t,N_t)=\langle W_t,\nu_t \rangle\geq 0$ on $\partial\Sigma_t$. Moreover, using $\theta_-(t)=0$ for all $t\in[0,\epsilon)$, the first variation of $\theta_-(t)$ with speed $\varphi_-=-\varphi_t$ yields to 
	\begin{equation}\label{vartheta-}
	0=\frac{d\theta_-(t)}{dt}=\Delta_t \varphi_--2\langle W_{t-},\nabla_t \varphi_-\rangle -\left(Q_{t-}+ \text{div}_{t}W_{t-}-|W_{t-}|^2\right)\varphi_-,
	\end{equation}where 
	\begin{equation}\label{Q-}
	Q_{t-}=\frac{1}{2}R_{\Sigma_t}-\mu-J(-N)-\frac{1}{2}|\chi_{t-}|^2=-2|J|-\frac{1}{2}|\chi_{t-}|^2\leq 0,
	\end{equation} and 
	\begin{equation}\label{W_-}
	W_{t-}=\left(k(\cdot,-N_t)|_{T\Sigma_t}\right)^{\#}=-W_t=-\nabla\log\varphi_t.
	\end{equation}We used $Q_t=\chi_{t+}=0$ in \eqref{Q-} and $W_t=\nabla\log\varphi_t$ in \eqref{W_-}. Substituting \eqref{W_-} in \eqref{vartheta-} yields to 
	\begin{equation}
	-\frac{1}{2}Q_{t-}\varphi_t+\Delta_t\varphi_t+\frac{|\nabla\varphi_t|^2}{\varphi_t}=0.
	\end{equation}Integrating over $\Sigma_t$ and using boundary condition $\partial_{\nu_t}\log\varphi_t=\Pi(N_t,N_t)$, we obtain
	\begin{equation}
	\int_{\Sigma_t}\left(-|J|-\frac{1}{4}|\chi_{t-}|^2-\frac{|\nabla\varphi_t|^2}{\varphi_t}\right)\varphi_td\mu_{\Sigma_t}=\int_{\partial\Sigma_t}\Pi(N_t,N_t)\varphi_td\mu_{\partial\Sigma_t}.
	\end{equation}Since $\varphi_t>0$ on $\Sigma_t$ and $\Pi(N_t,N_t)=\langle W_t,\nu_t \rangle\geq 0$ on $\partial\Sigma_t$, we get 
	\begin{equation}\label{rigiditycon}
	\nabla_t\varphi_t=W_t=J=\chi_{t-}=0,\qquad \text{on $\Sigma_t$}\qquad \Pi(N_t,N_t)=\langle W_t,\nu_t \rangle=0\qquad \text{on $\partial\Sigma_t$}.
	\end{equation}This shows that second fundamental forms $A_t=k_{\Sigma_{t}}=0$ and $\varphi_t$ is only a function of $t$. Therefore, $(\Sigma_t,h_t)$ is totally geodesic submanifold of $M$ and $h_t$ is also independent of $t$. By setting $ds=\varphi_tdt$, we have $h_t=ds^2+g_{\Sigma}$. If we assume (\ref{i}), combining \eqref{rigiditycon}, \eqref{HdM}, and property (\ref{III}), $\partial\Sigma_t$ in $\Sigma_t$ has geodesic curvature $\kappa=\inf_{\partial M_{\zeta}}H_{\partial M_{\zeta}}$. If we assume (\ref{ii}), it follows from \eqref{rigiditycon} and property (\ref{VII}) that $\kappa=H_{\partial M_{\zeta}}=0$. Finally, by boundary condition \eqref{rigiditycon} and Theorem \ref{ThmA1}, we have $\lambda_1(\mathcal{L})=0$ on $\Sigma_t.$ This complete the proof.
	
\end{proof}

\subsection{Free Boundary Marginally Outer Trapped Surfaces with Low Index}
One can define a notion of Morse index $i_s(\Sigma,\partial \Sigma)$ for free boundary MOTS as follows. Recall that the self-adjoint operator $\mathcal{L}_s=(L_s,B_s)$ as follows.
\begin{equation}
L_s(\psi)= -\Delta_{\Sigma}\psi +Q\psi, \: \: \text{on}\: \: \Sigma \hspace{0.5in} B_s(\psi)= \frac{\partial \psi}{\partial \nu}-(q-\langle W,\nu \rangle)\psi \: \: \text{on} \: \: \partial \Sigma.
\end{equation}
This yields a positive semi-definite index form
	\begin{equation}
	P(u,u):=\int_{\Sigma} \left(|\nabla u|^2 +Qu^2\right)d\mu_{\Sigma}-\int_{\partial \Sigma} \left(q-\langle W,\nu\rangle\right) u^2 d\mu_{\partial\Sigma} \geq 0.
	\end{equation}
One can define $i_s(\Sigma,\partial \Sigma)$ to be the Morse index associated with the operator $\mathcal{L}_s$; that is, the number of negative eigenvalues of the associated bilinear form. Note that a function $f\in W^{1,2}(\Sigma,\mathbb{R})$ is an eigenfunction of the index form $P(\cdot,\cdot)$ with eigenvalue $\lambda_s$ if $P(f,g)=\lambda_s (f,g)_{L^2}$ for all $g\in W^{1,2}(\Sigma,\mathbb{R})$.
\begin{remark}
Note that by the arguments of Section 5, if $q\leq 0$, then $\lambda(\mathcal{L})\leq \lambda(\mathcal{L}_s)$ and thus $i_s(\Sigma,\partial \Sigma)$ gives a kind of upper bound definition. It is possible to define the analogous lower bound definition based on a different symmetrized operator, $\mathcal{L}_z$, which has the property that, if $q\leq 0$, then $\lambda(\mathcal{L}_z)\leq \lambda (\mathcal{L})$, cf. \cite{AnderssonMarsSimon}. However, the operator $\mathcal{L}_z$ involves extra $W$ terms which make for more heavy handed statements.
\end{remark} 
Suppose now that $h$ is an eigenfunction of $\mathcal{L}_s$. In that case we have the following lemma of \cite{CFP12}, itself based on \cite{LY82}.
\begin{lemma}\label{lemcfp}\cite{CFP12}
There exists a conformal map $f:\Sigma \to S^2$ such that $\int_{\Sigma}fhdA_{\Sigma}=0$ and $f$ has degree $\leq \left[ \frac{g+3}{2} \right] $.
\end{lemma}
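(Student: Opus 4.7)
The plan is to follow the classical Li--Yau conformal volume method in two stages: first produce a conformal branched cover $\varphi_0:\Sigma\to S^2$ of degree at most $[(g+3)/2]$ by purely algebro-geometric means, and then post-compose with a suitable M\"obius automorphism of $S^2$ to kill the three scalar integrals encoded in $\int_\Sigma f h\, dA_\Sigma = 0$.

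\textbf{Stage 1.} For a closed orientable Riemann surface of genus $g$, the Brill--Noether/Riemann--Roch theorem guarantees a non-constant meromorphic function, hence a conformal branched cover to $\mathbb{C}P^1\cong S^2$, of degree at most $[(g+3)/2]$ (this is the gonality bound: the hyperelliptic bound for small $g$, combined with the general Brill--Noether bound). Since $\Sigma$ here may have boundary, I would first double $\Sigma$ along $\partial\Sigma$, apply the above to the closed double, and restrict (using the $\mathbb{Z}_2$-equivariant structure) to obtain a conformal map $\varphi_0:\Sigma\to S^2$ with $\deg\varphi_0\leq [(g+3)/2]$.

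\textbf{Stage 2.} Parametrize the conformal automorphism group of $S^2$ by the open ball $B^3\subset\mathbb{R}^3$: for each $v\in B^3$, let $\phi_v:S^2\to S^2$ be the canonical M\"obius map with $\phi_0=\mathrm{id}$, obtained by identifying $B^3$ with the hyperbolic ball whose boundary at infinity is $S^2$. Define
\begin{equation*}
V:B^3\to \mathbb{R}^3,\qquad V(v):=\int_\Sigma (\phi_v\circ \varphi_0)(x)\,h(x)\,dA_\Sigma(x).
\end{equation*}
As $v\to v_0\in\partial B^3=S^2$, one has $\phi_v(y)\to v_0$ for every $y\in S^2\setminus\{-v_0\}$, and the bad fiber $\varphi_0^{-1}(-v_0)$ is a finite set of measure zero. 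Dominated convergence then shows that $V$ extends continuously to $\bar{B}^3$ with $V(v)=v\,\bigl(\int_\Sigma h\, dA_\Sigma\bigr)$ on $\partial B^3$. Assuming $\int_\Sigma h\,dA_\Sigma\neq 0$, the boundary map is a non-zero scalar multiple of the identity embedding $S^2\hookrightarrow\mathbb{R}^3\setminus\{0\}$, which has topological degree $\pm 1$. A standard degree-theoretic argument (equivalently a Brouwer fixed-point argument applied to a rescaling) then produces $v^*\in B^3$ with $V(v^*)=0$, and $f:=\phi_{v^*}\circ\varphi_0$ is the desired map. Its degree equals $\deg\varphi_0$ because $\phi_{v^*}$ is a diffeomorphism of $S^2$, so the bound $\deg f\leq[(g+3)/2]$ is preserved.

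The main obstacle is the degenerate case $\int_\Sigma h\,dA_\Sigma=0$: the boundary values of $V$ then vanish identically and the naive degree argument collapses. The remedy is a perturbation-and-compactness step. I would apply Stage 2 to $h_\epsilon:=h+\epsilon$ for small $\epsilon\neq 0$ to obtain balancing parameters $v^*_\epsilon\in B^3$, then extract a convergent subsequence $v^*_\epsilon\to v^*\in \bar{B}^3$ as $\epsilon\to 0$. A uniform interior bound, coming from a closer analysis of the concentration of $\phi_v\circ\varphi_0$ near $\partial B^3$ combined with the fact that $h$ cannot be supported on a single $\varphi_0$-fiber, prevents $v^*$ from escaping to $\partial B^3$ and yields the required conformal map $f$ in the limit.
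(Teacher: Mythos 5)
The paper does not prove this lemma itself; it is cited from \cite{CFP12}. But the construction the paper relies on is visible in the proof of Proposition~\ref{indexestimate}: one glues a disk onto each of the $l$ boundary components of $\Sigma$ to obtain a closed Riemann surface $\overline{\Sigma}$ of the \emph{same} genus $g$, produces a conformal branched cover $\overline{\Sigma}\to S^2$ of degree at most $\left[\frac{g+3}{2}\right]$ by the Brill--Noether gonality bound (exactly as in Li--Yau), and restricts to $\Sigma$. Your Stage~2 — the M\"obius-balancing over $\bar{B}^3$ to arrange $\int_\Sigma f h\, dA_\Sigma=0$ — is the standard Li--Yau step and is correct; moreover the degenerate case $\int_\Sigma h\,dA_\Sigma=0$ that you spend effort on cannot occur in the paper's application, since $h$ is a non-negative first eigenfunction, not identically zero, so $\int_\Sigma h\,dA_\Sigma>0$.

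The genuine gap is in Stage~1. Doubling $\Sigma$ along $\partial\Sigma$ produces the Schottky double $D\Sigma$, a closed surface of genus $2g+l-1$. The Brill--Noether bound on $D\Sigma$ therefore gives a branched cover of degree at most $\left[\frac{(2g+l-1)+3}{2}\right]=g+\left[\frac{l+2}{2}\right]$, which exceeds $\left[\frac{g+3}{2}\right]$ in general — already for the annulus ($g=0$, $l=2$) it gives $2$ rather than $1$, and for $g=2$, $l=1$ it gives $3$ rather than $2$. Invoking the $\mathbb{Z}_2$-equivariant structure of $D\Sigma$ does not by itself fix this: a Brill--Noether pencil on $D\Sigma$ need not be equivariant, its restriction to one half need not be a proper branched cover of the required degree, and an equivariant gonality bound matching $\left[\frac{g+3}{2}\right]$ would require a separate argument that you neither state nor prove. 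The repair is to replace doubling by capping: the capped-off surface $\overline{\Sigma}$ has genus exactly $g$, so Brill--Noether applied there gives the correct degree bound, and your Stage~2 then carries over with the balancing integral taken over $\Sigma\subset\overline{\Sigma}$.
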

As a consequence, one obtains the following characterisation of free boundary MOTS with $i^s(\Sigma,\partial \Sigma)=1$.
\begin{proposition}\label{indexestimate}
Let $(M,g,k)$ be an initial data set with smooth boundary $\partial M$. Let $(\Sigma,\partial \Sigma)$ be a compact orientable two-sided surface of genus $g$ with $l\geq 1$ boundary components. Suppose that $(\Sigma,\partial \Sigma)$ is a free boundary MOTS with $H_{\partial M}-\langle W,\nu \rangle \geq 0$. If $\mu-|J|\geq 0$, then $i_s(\Sigma, \partial \Sigma)=1$ implies that $l< 10$ if $g$ is even and $l< 14$ if $g$ is odd; and if $\mu-|J|\geq c>0$, then $i_s(\Sigma, \partial \Sigma)=1$ implies $|\Sigma|\leq \frac{2\pi\left(7-(-1)^g-l\right)}{c}$.
\end{proposition}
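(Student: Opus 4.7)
The strategy is to feed the three coordinate functions of a Li--Yau type conformal map into the symmetric index form $P$, and then use Gauss--Bonnet to extract the topological and area information.

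Since $\mathcal{L}_s$ is self-adjoint with natural Robin boundary conditions, its spectrum is discrete and real; the hypothesis $i_s(\Sigma,\partial\Sigma)=1$ means the first eigenvalue $\lambda_1<0$ has a positive eigenfunction $h>0$ while every other eigenvalue is $\geq 0$. Apply Lemma~\ref{lemcfp} to produce a conformal map $f=(f^1,f^2,f^3):\Sigma\to S^2\subset\mathbb{R}^3$ of degree $d\leq\lfloor(g+3)/2\rfloor$ with $\sum_i(f^i)^2=1$ and $\int_\Sigma f^i h\,d\mu_\Sigma=0$ for each $i$. The orthogonality places each $f^i$ in the $L^2$-orthogonal complement of the one-dimensional negative eigenspace of $\mathcal{L}_s$, so expanding $f^i$ in the spectral basis yields $P(f^i,f^i)\geq 0$.

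Summing the three inequalities and using $\sum_i(f^i)^2=1$ gives
\begin{equation*}
0 \leq \sum_{i=1}^{3} P(f^i,f^i) = \int_\Sigma |\nabla f|^2\,d\mu_\Sigma + \int_\Sigma Q\,d\mu_\Sigma - \int_{\partial\Sigma}\bigl(q-\langle W,\nu\rangle\bigr)\,d\mu_{\partial\Sigma},
\end{equation*}
and the conformality of $f$ gives $\int_\Sigma|\nabla f|^2\,d\mu_\Sigma=8\pi d$. Substituting $Q=\tfrac12 R_\Sigma-\mu-J(N)-\tfrac12|\chi_+|^2$ together with the free boundary identity $q=H_{\partial M}-\kappa$ from \eqref{HdM}, then applying Gauss--Bonnet $\int_\Sigma K_\Sigma\,d\mu_\Sigma+\int_{\partial\Sigma}\kappa\,d\mu_{\partial\Sigma}=2\pi\chi(\Sigma)=2\pi(2-2g-l)$, one arrives at the core inequality
\begin{equation*}
\int_\Sigma \Bigl(\mu+J(N)+\tfrac12|\chi_+|^2\Bigr)\,d\mu_\Sigma + \int_{\partial\Sigma}\bigl(H_{\partial M}-\langle W,\nu\rangle\bigr)\,d\mu_{\partial\Sigma} \leq 8\pi d + 2\pi(2-2g-l).
\end{equation*}

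I conclude case by case. When $\mu-|J|\geq 0$, the hypothesis $H_{\partial M}-\langle W,\nu\rangle\geq 0$ together with $\mu+J(N)\geq\mu-|J|\geq 0$ makes the left-hand side non-negative, so $l\leq 4d+2-2g$; using $d\leq\lfloor(g+3)/2\rfloor$ this specializes to $l\leq 6$ for $g$ even (hence $l<10$) and $l\leq 8$ for $g$ odd (hence $l<14$). When $\mu-|J|\geq c>0$, dropping the non-negative $|\chi_+|^2$ and boundary terms on the left and using $\mu+J(N)\geq c$ gives $c|\Sigma|\leq 8\pi d+2\pi(2-2g-l)$, hence
\begin{equation*}
|\Sigma| \leq \frac{2\pi(4d+2-2g-l)}{c} \leq \frac{2\pi(7-(-1)^g-l)}{c},
\end{equation*}
where in the last step I used $4d-2g\leq 4$ for $g$ even and $4d-2g\leq 6$ for $g$ odd, uniformly captured by $4d-2g\leq 5-(-1)^g$. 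The only non-routine ingredient is the spectral justification that $L^2$-orthogonality to $h$ implies $P(f^i,f^i)\geq 0$; every other step is a direct substitution together with Gauss--Bonnet.
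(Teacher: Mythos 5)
Your proof follows the paper's argument essentially verbatim: test the symmetric index form against the Li--Yau conformal map orthogonal to the first eigenfunction, sum the three components using $\sum_i (f^i)^2=1$, and invoke Gauss--Bonnet together with the free boundary relation $q=H_{\partial M}-\kappa$ from \eqref{HdM} to reach the same core inequality before splitting into the two cases. One small slip: $\int_{\Sigma}|\nabla f|^2\,d\mu_{\Sigma}$ is not equal to $8\pi d$ — since $\Sigma$ has nonempty boundary one has strictly $\int_{\Sigma}|\nabla f|^2\,d\mu_{\Sigma}<\int_{\overline{\Sigma}}|\nabla f|^2\,d\mu_{\overline{\Sigma}}=8\pi d$ where $\overline{\Sigma}$ is the capped-off closed surface — but since you only use this quantity as an upper bound the conclusion is unaffected.
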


\begin{proof} The proof follows from Theorem 1.2. of \cite{CFP12} and since it is short we include it here. Let $h\geq 0$ be a first eigenfunction of the index form $P(\cdot,\cdot)$. By Lemma \ref{lemcfp}, there exists a conformal map $f:\Sigma\to S^2$ of degree $\leq \left[ \frac{g+3}{2}\right]$ such that $\int_{\Sigma}fh dA_{\Sigma}=0$. This shows that the components $f_i$ of $f$ are orthogonal to $h$. Therefore, $f$ is an basis element of orthogonal of index. Consequently 
\begin{equation}
P(f_i,f_i)=\int_{\Sigma} \left(|\nabla f_i|^2 +Qf_i^2\right)d\mu_{\Sigma}-\int_{\partial \Sigma} \left(q-\langle W,\nu\rangle\right) f_i^2 d\mu_{\partial\Sigma} \geq 0
\end{equation}
Summing over $i$ and using $\sum_{i=1}^{3}|f_i|^2=1$ gives 
\begin{equation}
\int_{\Sigma} \left(|\nabla f_i|^2 +Q\right)d\mu_{\Sigma}-\int_{\partial \Sigma} \left(q-\langle W,\nu\rangle\right) d\mu_{\partial\Sigma} \geq 0
\end{equation} Let $\overline{\Sigma}$ be a compact domain with genus $g$ by gluing a disk on each boundary component of $\Sigma$. Since $f:\overline{\Sigma}\to S^2$ is conformal we have 
\begin{equation}
\int_{\Sigma}|\nabla f|^2d\mu_{\Sigma}< \int_{\overline{\Sigma}}|\nabla f|^2d\mu_{\overline{\Sigma}}=2|f(\overline{\Sigma})|=2|S^2|\text{deg}(f)\leq 8\pi \left[ \frac{g+3}{2}\right]=8\pi\left(\frac{g+3-\frac{1+(-1)^g}{2}}{2}\right)
\end{equation}
where $|f(\overline{\Sigma})|$ is the area of the image of $\overline{\Sigma}$, and thus
\begin{equation}
8\pi \left[ \frac{g+3}{2}\right] - \int_{\partial \Sigma} \left(q-\langle W,\nu\rangle\right) d\mu_{\partial\Sigma} > - \int_{\Sigma}Q d\mu_{\Sigma}
\end{equation}
Using $Q=K_{\Sigma}-\mu -J(N)-\frac{1}{2}|\chi_+|^2$ and the Gauss-Bonnet theorem
\begin{equation}
\int_{\Sigma}K_{\Sigma}dA_{\Sigma} +\int_{\partial \Sigma} k d\mu_{\partial \Sigma}=2\pi \chi(\Sigma)=2\pi(2-2g-l)
\end{equation}
we obtain 
\begin{equation}
8\pi \left[ \frac{g+3}{2}\right] -\int_{\partial \Sigma} \left(q-\langle W,\nu\rangle\right) d\mu_{\partial \Sigma} -\int_{\partial \Sigma} k d\mu_{\partial \Sigma}   
> -2\pi(2-2g-l) + \int_{\Sigma} \left(\mu+J(N)+\frac{1}{2}|\chi_+|^2\right)d\mu_{\Sigma} 
\end{equation}
By the free boundary property we get
\begin{equation}\label{in1}
8\pi \left[ \frac{g+3}{2}\right] -\int_{\partial \Sigma} \left(H_{\partial M}-\langle W,\nu\rangle\right) d\mu_{\partial \Sigma} +2\pi(2-2g-l)> \int_{\Sigma} \left(\mu+J(N)+\frac{1}{2}|\chi_+|^2\right)d\mu_{\Sigma} 
\end{equation}
Using $\mu -|J|\geq 0$ and $H_{\partial M}-\langle W,\nu \rangle \geq 0$ we have 
\begin{equation}
2\left[ \frac{g+3}{2}\right]>  \frac{1}{2}(g +\frac{l}{2} -1) 
\end{equation}
from which the conclusion follows. Finally, if $\mu-|J|\geq c$, then \eqref{in1} leads to 
\begin{equation}
8\pi \left[ \frac{g+3}{2}\right]  +2\pi(2-2g-l)> c |\Sigma| 
\end{equation}
which finishes the proof.

\end{proof}

\subsection{A Diameter Estimate for Stable Free Boundary Marginally Outer Trapped Surfaces}
The proof of the diameter estimate in \cite{CF19} combines an argument of Fischer-Colbrie \cite{F85}, a classical theorem of Hartman \cite{H64}, and a computation in \cite{SY83} employed in \cite{C17}. Those arguments apply in this setting to yield the following.
\begin{proposition}\label{prop6.7}
Let $(M,g,k)$ be an initial data set with boundary $\partial M$ having second fundamental form $\Pi(\cdot, \cdot)$ and outwards pointing normal $\nu$. Let $\Sigma$ be a complete, two-sided, connected, embedded stable free boundary MOTS with unit normal $N$. Suppose on $\partial \Sigma$ that $\Pi(N,N)\leq 0$. Finally, assume that either one of the following holds
\begin{itemize}
\item[(i)] $\inf_{M}(\mu-|J|)>0$ and $\inf_{\partial M}\left(H_{\partial M}-\langle W,\nu \rangle \right) \geq 0 $ with no components having $H_{\partial M}-\langle W,\nu \rangle=0$,
\item[(ii)] $\inf_{M}(\mu-|J|)$ is non-negative and $\partial M$ is such that $H_{\partial M}-\langle W,\nu \rangle >0$.
\end{itemize}
Then $\Sigma$ is compact and satisfies the intrinsic diameter estimate
\begin{equation}
\emph{diam}(\Sigma)\equiv \sup_{x,y\in \Sigma}d_{\Sigma}(x,y)\leq \emph{min}\left( \frac{2\pi}{\sqrt{3\inf_{M}(\mu-|J|)}},\frac{\pi +\frac{8}{3}}{\inf_{\partial M}\left(H_{\partial M}-\langle W,\nu \rangle\right)}\right),
\end{equation} 
and moreover one has 
\begin{equation}
0<\inf_{M}\left(\mu-|J|\right)\mathscr{H}^2(\Sigma)+\inf_{\partial M}\left(H_{\partial M}-\langle W,\nu \rangle\right)\mathscr{H}^1(\partial \Sigma)\leq 2\pi \chi(\Sigma),
\end{equation} 
where $\mathscr{H}^{1}(\partial \Sigma)$ and $\mathscr{H}^{2}(\Sigma)$ denote the $1$ and $2$ dimensional Hausdorff measures of $\partial \Sigma$ and $\Sigma$, respectively. Thus $\Sigma$ is diffeomorphic to a disc.
\end{proposition}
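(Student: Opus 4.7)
The plan is to combine the symmetrized stability inequality of Lemma~\ref{Stablelemma} with the three cited ingredients: a Fischer-Colbrie-type transverse variation \cite{F85}, Hartman's oscillation criterion \cite{H64}, and the Schoen-Yau geodesic computation \cite{SY83}, as organized by Carlotto-Franz \cite{CF19}. Since $\Pi(N,N)\le 0$ on $\partial\Sigma$ gives $q\le 0$, Lemma~\ref{Stablelemma} applies and yields the positive semidefinite form
\begin{equation*}
P(u,u)=\int_\Sigma\bigl(|\nabla u|^2+Qu^2\bigr)d\mu_\Sigma-\int_{\partial\Sigma}\bigl(q-\langle W,\nu\rangle\bigr)u^2\,d\mu_{\partial\Sigma}\ge 0
\end{equation*}
for all $u\in C^\infty(\Sigma)$, with $Q=\tfrac12R_\Sigma-\mu-J(N)-\tfrac12|\chi_+|^2$; using $H_{\partial M}=\kappa+q$ the boundary density rewrites as $\kappa-(H_{\partial M}-\langle W,\nu\rangle)$.

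For the integrated inequality I would take $u\equiv 1$ and apply the Gauss-Bonnet theorem to convert $\int_\Sigma K_\Sigma+\int_{\partial\Sigma}\kappa$ into $2\pi\chi(\Sigma)$, obtaining
\begin{equation*}
2\pi\chi(\Sigma)\ge\int_\Sigma\bigl(\mu+J(N)+\tfrac12|\chi_+|^2\bigr)d\mu_\Sigma+\int_{\partial\Sigma}\bigl(H_{\partial M}-\langle W,\nu\rangle\bigr)d\mu_{\partial\Sigma}.
\end{equation*}
Dropping $|\chi_+|^2$, bounding $\mu+J(N)\ge\mu-|J|$, and passing to the infima produces the stated area/boundary-length inequality. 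Under hypothesis (i) or (ii) the right-hand side is strictly positive, so $\chi(\Sigma)>0$.

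For the diameter estimate, given any two points $p,q\in\Sigma$ I would join them by a unit-speed minimizing geodesic $\gamma:[0,L]\to\Sigma$ and test $P$ against a function $u(x)=\phi(s(x))$ supported in a Fermi tube around $\gamma$, where $s$ is the nearest-point parameter along $\gamma$. Expanding the area element to second order in the transverse coordinate produces the Gauss curvature $K_\Sigma$, which cancels against the $K_\Sigma$ inside $Q$ in the narrow-strip limit via Gauss-Bonnet along $\gamma$; what remains is a one-dimensional spectral inequality of the form
\begin{equation*}
\int_0^L\phi'(s)^2\,ds\ge\int_0^L\bigl(\mu-|J|+\tfrac12|\chi_+|^2\bigr)\bigl(\gamma(s)\bigr)\phi(s)^2\,ds+\mathcal{B},
\end{equation*}
where $\mathcal{B}$ collects endpoint contributions driven by $H_{\partial M}-\langle W,\nu\rangle$ at any endpoint of $\gamma$ lying on $\partial\Sigma$, arising through the Robin condition $\partial_\nu u=(q-\langle W,\nu\rangle)u$. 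Plugging in the Carlotto-Franz trial profiles (a cosine with frequency $\sqrt{\inf_M(\mu-|J|)/3}$ for the interior bound, and an affine profile for the boundary bound) and invoking Hartman's non-oscillation theorem as in \cite{CF19} forces the claimed upper bounds on $L$. Completeness together with the diameter bound then yields compactness of $\Sigma$ via Hopf-Rinow, and $\chi(\Sigma)>0$ with $\partial\Sigma\ne\emptyset$ forces $\Sigma$ to be a disc.

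The main obstacle is the boundary bookkeeping in the reduction to the one-dimensional inequality: when $\gamma$ meets $\partial\Sigma$, the Robin boundary condition and the identity $H_{\partial M}=\kappa+q$ must interact so that the endpoint terms in the reduced ODE produce exactly the constants $2\pi/\sqrt{3\inf_M(\mu-|J|)}$ and $(\pi+8/3)/\inf_{\partial M}(H_{\partial M}-\langle W,\nu\rangle)$ demanded by Hartman's criterion. It is here that the non-self-adjoint $W$-terms in the MOTS stability operator must be absorbed cleanly, and where the symmetrization of Lemma~\ref{Stablelemma} is indispensable.
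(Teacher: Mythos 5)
The paper's own proof of this proposition is a one-sentence deferral to Proposition~1.8 of \cite{CF19}, so the comparison is really with that argument. You identify the right ingredients---Lemma~\ref{Stablelemma}, applicable because $q=\Pi(N,N)\le 0$ in the free boundary case, together with the Fischer-Colbrie/Hartman/Schoen-Yau chain as organized by \cite{CF19}---and your derivation of the area and boundary-length inequality via $u\equiv 1$ and Gauss-Bonnet is correct; it coincides with the computation of Proposition~\ref{prop1rigidity}.

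Your proposed mechanism for the diameter estimate, however, is not the one that works. Testing $P$ with a function supported in a shrinking Fermi tube about $\gamma$ introduces a transverse cutoff whose gradient energy $\int|\partial_r u|^2$ diverges as the tube collapses, so the narrow-strip limit cannot isolate a one-dimensional inequality in the way you describe; and ``Gauss-Bonnet along $\gamma$'' has no meaning for a curve. In \cite{F85,SY83,C17,CF19} the reduction is accomplished by pairing the stability inequality with a \emph{second}, independent inequality: the Jacobi inequality for the minimizing geodesic $\gamma$,
\begin{equation*}
\int_0^L\bigl(\phi'^2-K_\Sigma\phi^2\bigr)\,ds+\bigl[\text{endpoint terms}\bigr]\ge 0,
\end{equation*}
which is the second variation of the \emph{arc-length} functional, not of the stability form. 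It is this Jacobi inequality that eliminates $K_\Sigma$ from $Q$; and when $\gamma$ meets $\partial\Sigma$, its endpoint term brings in the geodesic curvature $\kappa$ of $\partial\Sigma$ in $\Sigma$, which the identity $H_{\partial M}=\kappa+q$ together with the Robin condition $B_s\psi=0$ converts into the quantity $H_{\partial M}-\langle W,\nu\rangle$. The resulting one-dimensional Sturm-Liouville problem with Robin data is what Hartman's oscillation theorem is applied to, yielding the stated constants. You correctly flag the endpoint bookkeeping as the hard part and leave it open; but even in the interior your sketch omits the Jacobi inequality, which is where the $K_\Sigma$ cancellation actually takes place.
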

\begin{proof}The proof is long but it follows directly from the proof of Proposition 1.8 in \cite{CF19}.
\end{proof}

\appendix

\section{Linear Elliptic PDE with Neumann Boundary Condition}
We include the proof of the following theorem and it is a direct consequence of Dirichlet boundary condition in \cite{Evans} .

\begin{theorem}\label{ThmA1}
Let $(\Omega,g)$ be a $C^{2,\alpha}$ Riemannian manifold with boundary $\partial\Omega$. Consider the differential equation 
\begin{equation}
Lu:=-\Delta_gu+b(x)^i\nabla_i u+c(x)u\quad \Omega,\qquad Bu:=\frac{\partial u}{\partial \nu}+\beta(x)u\qquad \partial\Omega,
\end{equation} where $b,c\in C^{\alpha}(\bar{\Omega})$, $\beta\in C^{1,\alpha}(\partial\Omega)$. If $\beta\geq 0$ and $Bu=0$, then there exist a simple principle real eigenvalue $\lambda_1$. Moreover, for any eigenvalue $\mu$, we have $\text{Re}\mu\geq \lambda_1$. The corresponding eigenfunction $\varphi$, $L\varphi=\lambda_1\varphi$ is unique up to a multiplicative constant and can be chosen to be real and everywhere positive. The adjoint $L^*$ (with respect to the $L^2$ inner product) has the same principle eigenvalue.
\end{theorem}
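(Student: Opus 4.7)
The plan is to reduce the Robin eigenvalue problem to the classical Krein-Rutman theorem for compact strongly positive operators, following the scheme used by Evans for the Dirichlet case. Fix $\sigma > 0$ large enough that the shifted operator $L_\sigma := L + \sigma I$ is coercive on $H^1(\Omega)$; the associated non-symmetric bilinear form is coercive because $\sigma$ dominates $\|c\|_\infty$ and any negative contribution from the drift term $b^i \nabla_i u$ via Cauchy-Schwarz, while $\beta \geq 0$ makes the boundary term non-negative. By Lax-Milgram plus Schauder theory for the oblique (Robin) boundary condition (see Gilbarg-Trudinger, Ch.~6), the problem $L_\sigma u = f$ in $\Omega$, $Bu = 0$ on $\partial\Omega$ has a unique solution $u \in C^{2,\alpha}(\bar{\Omega})$ for every $f \in C^{0,\alpha}(\bar{\Omega})$. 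This defines a bounded resolvent $T := L_\sigma^{-1} : C^{0,\alpha}(\bar{\Omega}) \to C^{2,\alpha}(\bar{\Omega})$, which is compact when viewed as $T: X \to X$ with $X := C(\bar{\Omega})$, thanks to the compact embedding $C^{2,\alpha} \hookrightarrow C$.

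Next, I would verify that $T$ is strongly positive with respect to the cone $X^+ := \{u \in X : u \geq 0\}$, which has non-empty interior. Given $f \geq 0$, $f \not\equiv 0$, the solution $u = Tf$ satisfies $L_\sigma u \geq 0$, so the strong maximum principle gives $u > 0$ in $\Omega$. At any boundary minimum point $x_0 \in \partial\Omega$ where $u(x_0) = 0$, the Hopf boundary point lemma would force $\partial_\nu u(x_0) < 0$; but the Robin condition $\partial_\nu u(x_0) + \beta(x_0) u(x_0) = 0$ combined with $u(x_0) = 0$ and $\beta(x_0) \geq 0$ forces $\partial_\nu u(x_0) = 0$, a contradiction. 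Therefore $u > 0$ on all of $\bar{\Omega}$, placing $Tf$ in the interior of $X^+$.

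With compactness and strong positivity in hand, the Krein-Rutman theorem produces a simple eigenvalue $\mu_1 > 0$ of $T$ equal to its spectral radius, with a unique (up to scalar) positive eigenfunction $\varphi \in \mathrm{int}\,X^+$, and every other eigenvalue $\mu$ of $T$ satisfies $|\mu| < \mu_1$. Undoing the shift, $\lambda_1 := \mu_1^{-1} - \sigma$ is a simple real eigenvalue of $(L,B)$ with positive eigenfunction $\varphi$, and for any other eigenvalue $\mu$ of $(L,B)$ the bound $|(\mu+\sigma)^{-1}| \leq \mu_1$ translates, via the elementary inequality $\mathrm{Re}(\mu+\sigma) \geq |\mu+\sigma|^{-1}\cdot|\mu+\sigma|^2 \geq \mu_1^{-1}$, into $\mathrm{Re}\,\mu \geq \lambda_1$. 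For the adjoint statement, since $T$ is compact, Riesz-Schauder theory ensures that $T$ and $T^*$ share their nonzero spectrum with the same algebraic multiplicities; applying Krein-Rutman to $T^*$, which is compact and strongly positive for the formal adjoint BVP (whose drift is $-b$ and whose boundary coefficient is again non-negative, so the same Hopf argument applies), yields a positive adjoint eigenfunction $\varphi^*$ with $L^* \varphi^* = \lambda_1 \varphi^*$.

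The main obstacle is the verification of strong positivity of $T$, where the Robin condition interacts delicately with the Hopf lemma; the hypothesis $\beta \geq 0$ is exactly what is needed to close that argument, since a negative $\beta$ would allow the Robin condition to be compatible with a strict inward normal derivative at a boundary zero of $u$. Once strong positivity is secured, the rest is a bookkeeping exercise transferring spectral data between $T$ and $L$ via the shift and invoking standard Fredholm and Krein-Rutman machinery.
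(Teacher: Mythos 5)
The overall strategy — shift the operator, invert to a compact resolvent $T$, establish strong positivity via the Hopf lemma, invoke Krein--Rutman, then undo the shift — is the same one used in the paper. However, your derivation of the lower bound $\mathrm{Re}\,\mu \geq \lambda_1$ contains a genuine error. You claim the ``elementary inequality'' $\mathrm{Re}(\mu+\sigma) \geq |\mu+\sigma|^{-1}\cdot|\mu+\sigma|^2$, but the right-hand side equals $|\mu+\sigma|$, so this asserts $\mathrm{Re}(z)\geq |z|$ for $z=\mu+\sigma$, which holds only when $z$ is a non-negative real number. For a genuinely complex eigenvalue $\mu$, the Krein--Rutman modulus bound $|(\mu+\sigma)^{-1}|<\mu_1$, i.e.\ $|\mu+\sigma|>\lambda_1+\sigma$, is strictly weaker than $\mathrm{Re}\,\mu\geq\lambda_1$ for any single fixed $\sigma$: take $\mu+\sigma$ large in modulus but with argument near $\pi/2$. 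The conclusion can still be rescued from the modulus inequality, but you must observe that the shift $\sigma$ is arbitrary; writing $\mu=a+bi$, the bound $(a+\sigma)^2+b^2>(\lambda_1+\sigma)^2$ for \emph{all} sufficiently large $\sigma$ reduces, after cancelling $\sigma^2$, to $(a^2+b^2-\lambda_1^2)+2(a-\lambda_1)\sigma>0$, and forcing $\sigma\to\infty$ yields $a\geq\lambda_1$. This ``letting $\sigma$ tend to infinity'' step is what is missing. The paper avoids the issue altogether and instead proves $\mathrm{Re}\,\mu\geq\lambda_1$ by the Evans-style substitution $u=\varphi^{-1}\psi$, deriving $K(|u|^2)\leq 2(\mathrm{Re}\,\mu-\lambda_1)|u|^2$ with homogeneous Neumann data for $|u|^2$ and then applying the maximum principle and Hopf lemma.

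A secondary point: your treatment of the adjoint is also not quite right. You assert that the adjoint BVP has ``boundary coefficient again non-negative,'' but integration by parts shows the adjoint boundary condition is $\partial_\nu v + (\beta + b\cdot\nu)v=0$, not $\partial_\nu v+\beta v=0$, and $\beta + b\cdot\nu$ need not have a sign even if $\beta\geq 0$. The paper sidesteps this by arguing directly: once $L$ and $L^*$ both have real principal eigenvalues with strictly positive eigenfunctions $\varphi,\varphi^*$, the identity $0=\langle L\varphi,\varphi^*\rangle - \langle\varphi,L^*\varphi^*\rangle = (\lambda_1-\lambda_1^*)\langle\varphi,\varphi^*\rangle$ together with $\langle\varphi,\varphi^*\rangle>0$ forces $\lambda_1=\lambda_1^*$.
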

\begin{proof}
Let $\delta>\sup_{\Omega}-c(x)$ and define an operator $L_{\delta}=L+\delta$ such that
\begin{equation}
L_{\delta}u:=-\Delta_gu+b(x)^i\nabla_i u+(c(x)+\delta)u\quad \Omega,\qquad\qquad Bu:=\frac{\partial u}{\partial \nu}+\beta(x)u\qquad \partial\Omega.
\end{equation}Since $c(x)+\delta\geq 0$, by Theorem 6.31 of \cite{GT} the PDE $L_{\delta}u=f$ with $Bu=\gamma(x)$ has a unique $u\in C^{2,\alpha}$ solution for all $f\in C^{\alpha}(\Omega)$ and $\gamma\in C^{1,\alpha}(\partial\Omega)$. In particular, we have 
\begin{equation}
|u|_{C^{2,\alpha}(\Omega)}\leq C\left(|u|_{C^{\alpha}({\Omega})}+|\gamma|_{C^{1,\alpha}(\partial\Omega)}+|f|_{C^{\alpha}(\Omega)}\right)
\end{equation}Let $K$ be  the set of non-negative functions in $X=C^{2,\alpha}(\Omega)$. Then we define a compact linear operator $A:X\rightarrow X$ such that $Af=u$, where $u$ is a solution of $L_{\delta}u=f$ with boundary condition $Bu=0$. If $f\in K$, then we claim $u=Af\geq 0$. Assume $u$ can be negative. By the weak maximum principle we know the minimum of $u$ is at boundary $x_0\in\partial\Omega$ and $u(x_0)<0$. Since $Bu\geq 0$, we have $\frac{\partial u}{\partial\nu}(x_0)\geq 0$ which contradicts the Hopf maximum principle. If $f\in K\,\backslash\, \{0\}$, then we show $u=Af>0$ on $\Omega$. Assume there is a point $x_0\in\partial\Omega$ such that $u(x_0)=0$. This means $x_0$ is the minimum point and therefore by the Hopf maximum principle we have $\frac{\partial u}{\partial\nu}(x_0)< 0$ which contradicts the boundary condition. Therefore, $A$ is a positive operator and by Krein--Rutman theorem, there exists a unique non-negative
function $\varphi\in C^{\alpha}(\Omega)$ such that $A\varphi=\xi\varphi$ for positive real eigenvalue $\xi$. Moreover, positivity of $\varphi\in C^{2,\alpha}(\bar{\Omega})$ follows from positivity of $A$. The elliptic regularity implies that $\varphi$ is in fact smooth if the coefficients of $L$, $f$, and $\gamma$ are smooth. It follows that $L\varphi=(\xi^{-1}-\delta)\varphi$ on $\Omega$ and $B\varphi=0$ on $\partial\Omega$, where $\lambda_1=(\xi^{-1}-\delta)$ is a real eigenvalue and $\varphi$ is a positive eigenfunction. 

To show that $\text{Re}\mu\geq \lambda$, we follow Section 6.5.2 of \cite{Evans}. Let $\psi$ be a (possibly complex) eigenfunction of $L$ with eigenvalue $\mu$. Define $u=\varphi^{-1}\psi$ and a direct computation gives
\begin{equation}
-\Delta_g u+b_1^{i}(x)\nabla_i u+\left(\lambda_1-\mu\right) u=0
\end{equation}where $b_1^{i}(x)=b^{i}(x)-2\varphi^{-1}\nabla^i\varphi$. Using also the complex conjugate of above equation a short calculation, see Section 6.5.2 of Evans, gives
\begin{equation}
K(|u|^2)\leq 2\left(\text{Re}\mu-\lambda\right)|u|^2
\end{equation} where $K:=-\Delta_g+b_1^{i}(x)\nabla_i$. Moreover, using $B\varphi=B\psi=0$, we have the boundary condition 
\begin{equation}
\frac{\partial}{\partial\nu} u=-\varphi^{-2}\psi\frac{\partial}{\partial\nu}\varphi+\varphi^{-1}\frac{\partial}{\partial\nu}\psi=u\beta-u\beta=0
\end{equation}which implies
\begin{equation}\label{BCu2}
\begin{split}
\frac{\partial}{\partial\nu}|u|^2=\langle\frac{\partial}{\partial\nu} u,\bar{u}\rangle+\langle u,\frac{\partial}{\partial\nu}\bar{u}\rangle=0
\end{split}
\end{equation}Therefore, if $\text{Re}\mu-\lambda_1< 0$, then by the weak maximum principle and the Hopf maximum principle, the maximum of $|u|^2$ is at the boundary $x_0\in\partial\Omega$ and $\frac{\partial |u|^2}{\partial\nu}(x_0)>0$. This contradicts equation \eqref{BCu2} and $\text{Re}\mu\geq \lambda_1 $ as claimed.
Moreover, clearly if $Bu=0$, we have adjoint operator $L^*v=\lambda^*v$ with boundary $Bv=0$ such that
\begin{equation}
0=\langle Lu,v\rangle-\langle u,L^*v\rangle=(\lambda-\lambda^*)\langle u,v\rangle.
\end{equation}Since real positive eigenfunctions $u$ and $v$ cannot be orthogonal, we have $\lambda=\lambda^*$.
	\end{proof}

\end{document}